\documentclass{article}

\usepackage{longtable}
\usepackage{fixmath}
\usepackage{mathdots}
\usepackage{environ}
\usepackage{xcolor}
\usepackage{caption}
\usepackage[a4paper,left=1in,right=1in,top=1in,bottom=1in]{geometry}
\usepackage{amsfonts,amsmath,amssymb,amsthm}
\usepackage{hyperref}
\usepackage{graphicx}
\usepackage[english]{babel}
\usepackage[utf8]{inputenc}
\usepackage{csquotes,xpatch}

\MakeOuterQuote{"}

\usepackage{times}
\usepackage{enumitem}
\usepackage{setspace}
\usepackage{biblatex}
\newcommand {\mat}[1] {\left[\begin{array}{#1}}
\newcommand {\rix}{\end{array}\right]}

\graphicspath{{images/}}

\newtheorem{theorem}{Theorem}[section]

\newtheorem{definition}[theorem]{Definition}
\newtheorem{example}[theorem]{Example}

\newtheorem{lemma}[theorem]{Lemma}

\newtheorem{remark}[theorem]{Remark}

\title{Structured Linearizations of Structured Rational Matrices}

\author{
Avisek Bist \thanks{Department of Mathematics, Sikkim University, Sikkim-737102, India, ({\tt avisek.bista@gmail.com})}  \and
Namita Behera \thanks{Corresponding author\\ Department of Mathematics, Sikkim University, Sikkim-737102, India, ({\tt nbehera@cus.ac.in}, niku.namita@gmail.com).}} 

\date{}

\begin{document}

\maketitle

\begin{abstract}
Numerical computations involving rational matrices often benefit from preserving underlying matrix structures such as symmetry, Hermitian properties, or sparsity that reflect physical, geometric, or algebraic characteristics of the system. Maintaining such structures enhances stability, accuracy, and efficiency. Linearization, a technique that reformulates rational matrix problems as generalized eigenvalue problems (GEPs) of larger matrices, is widely used but does not automatically retain structure. In this chapter, we focus on structured linearizations, which preserve both the spectral information of the original rational matrix and its intrinsic structural properties. To achieve this, we present the construction of a family of linearizations called generalized Fiedler pencils with repetition (GFPR), which we prove to be valid linearizations for rational matrices. Moreover, we demonstrate that the GFPR family serves as a versatile framework for generating structured linearizations, specifically symmetric, skew-symmetric, $T$-even, and $T$-odd linearizations, provided the original rational matrix exhibits the corresponding structure. These structured linearizations facilitate the use of specialized, structure-preserving algorithms, reduce numerical errors, and yield physically meaningful solutions in applications.
\end{abstract}

{\bf Keywords: }
Structured rational matrix, system matrix, matrix polynomial, eigenvalues, eigenvector, Fiedler pencil, linearization

{\bf AMS subject classifications:} 
65F15, 15A57, 15A18, 65F35

\section{Introduction}

Structured rational matrices, whose entries are rational functions with additional properties such as symmetry or Hermitian structure, appear naturally in applied mathematics, control theory, and systems theory, often describing frequency-dependent dynamics. Preserving this structure in computations is essential for numerical stability, accuracy, and efficiency.

A common approach is linearization, which converts a rational matrix problem into an equivalent generalized eigenvalue problem (GEP) of larger dimension. A structured linearization is one that retains not only the spectral information of the original rational matrix but also its intrinsic algebraic structure, enabling the use of structure-preserving algorithms and physically meaningful results.

For rational matrix functions $G(\lambda)$ arising as transfer functions of LTI systems, the Rosenbrock system matrix provides a natural bridge from state-space realizations to pencils. Rosenbrock-based (Fiedler-like) pencils have been developed to linearize such rational matrix functions, allowing eigenvector recovery and the handling of minimal indices. These constructions, starting from controllable and observable realizations, laid the groundwork for structured linearizations \cite{AB2016}. Analogous frameworks have been extended to multivariable systems, covering both square and rectangular cases \cite{BB2022,BBM2024}.

A principal class of structure-preserving linearizations is the vector space $\mathbb{DL}(G)$ introduced in Chapter 4, whose pencils are built from the coefficient matrices of $A(\lambda)$ and $D(\lambda)$. While these provide linearizations for regular $G(\lambda)$, they generally do not yield strong linearizations in the singular case.

When $G(\lambda)$ possesses algebraic symmetry, structured strong linearizations are designed to inherit the same symmetry, e.g., symmetric, Hermitian, or para-Hermitian---thereby preserving eigenvalue symmetries and supporting specialized algorithms. Recent work has produced families of Fiedler-like strong linearizations that preserve structure and maintain properties such as the index in symmetric settings, along with strongly minimal self-conjugate linearizations for Hermitian and para-Hrmitian classes \cite{DA2022}.

\section{Generalized Fiedler pencils with repetition}

We focus on rational matrix functions expressed in the realization form
\begin{equation}\label{trfunction_chap05}
G(\lambda) = C A(\lambda)^{-1} B + D(\lambda) \in \mathbb{C}(\lambda)^{r \times r},
\end{equation}
which arises from the linear time-invariant system
\begin{equation}\label{my_lti_chap05}
\begin{aligned}
       A\left(\frac{d}{dt}\right) x(t) &= B u(t), \\
       y(t) &= C x(t) + D\left(\frac{d}{dt}\right) u(t),
\end{aligned}
\end{equation}
where $A(\lambda) = \sum_{j=0}^{m}\lambda^{j}A_j \in \mathbb{C}[\lambda]^{n \times n}$ is a regular matrix polynomial of degree $m$ and $D(\lambda) = \sum_{j=0}^{k}\lambda^{j}D_j \in \mathbb{C}[\lambda]^{r \times r}$ is a matrix polynomial of degree $k$, with $C \in \mathbb{C}^{r \times n}$, $B \in \mathbb{C}^{n \times r}$. The associated system matrix is given by
\begin{equation}\label{sysmatrix_chap05}
\mathcal{S}(\lambda) = \left[\begin{array}{c|c}
                            A(\lambda) & -B \\ \hline
                            C & D (\lambda) \\
                       \end{array}\right] 
                       \in \mathbb{C}[\lambda]^{(n+r)\times (n+r)}.
\end{equation}

  Next we define elementary matrices for the polynomial matrices $A(\lambda)$ and $D(\lambda)$ which is the generalization of Fiedler matrices defined in \cite{TDM2010}.

\subsection{Fiedler matrices}

Given a matrix $P\in\mathbb{C}^{n\times n}$, the family of elementary block matrices for $A(\lambda)$ are defined as \cite{BDFR2015},

\[
M_{0} (P) = \left[
                   \begin{array}{@{}cc@{}}
                     I_{(m-1)n} &  \\
                      & P \\
                   \end{array}
                 \right],~   M_{i} (P) = \left[
                 \begin{array}{@{}cccc@{}}
                 I_{(m-i-1)n} &  & & \\
                 & P & I_{n} & \\
                 & I_{n} & 0  & \\
                 &   &   & I_{(i-1)n}\\
                 \end{array}
                 \right] ~\mbox{for}~ i= 1,2,\ldots, m-1,
\]
\[
M_{-m}(P) = \left[
\begin{array}{@{}cc@{}}
P &  \\
& I_{(m-1)n} \\
\end{array}
\right],  ~ M_{-i}(P) = \left[
  \begin{array}{@{}c@{\;}ccc@{}}
    I_{(m-i-1)n} &  & & \\
     & 0 & I_{n} & \\
     & I_{n} & P  &  \\
     &   &   & I_{(i-1)n}
  \end{array}\right] ~\mbox{for}~ i= 1,2,\ldots, m-1.
\]
For matrices $P, Q \in \mathbb{C}^{n \times n}$, the matrices constructed above have the following properties \cite{BDFR2015}.
\begin{itemize}
\item For any $i=1,\dots,m-1$, the matrices $M_i(P)$ and $M_{-i}(P)$ are non-singular and
\[
M_i(P) \, M_{-i}(-P) = I_{mn}.
\]

\item The matrices $M_0(P)$ and $M_{-m}(P)$ are non-singular whenever $P$ is.

\item If $\bigl|\, |i| - |j| \,\bigr| > 1$, then $M_i(P)$ and $M_j(Q)$ commute:
\[
M_i(P)M_j(Q) = M_j(Q)M_i(P).
\]
\end{itemize}
For $i=-m,\ldots,-1,0,1,\ldots,m-1$, and for $A(\lambda)$ we define\cite{BDFR2015}
 \[
 M_{i}^{A}   =
     \left\{ \begin{array}{ll}
              M_i(-A_i) & \mbox{for}~ i= 0,1,\ldots,m-1\\
              M_i(A_{-i}) & \mbox{if}~ i=-1,-2,\ldots,-m. 
              \end{array}
        \right.
\]
Then matrices $ M_{i}^{A} $ for $i=-m,\ldots,-1,0,1,\ldots,m-1$ are exactly the Fiedler matrices associated with $A(\lambda) $, see \cite{TDM2010}.

Similarly, for any $P \in \mathbb{C}^{r \times r}$, we define the elementary block matrices for $D(\lambda)$ as~\cite{BDFR2015}
\[
N_{0}(P) = \left[\begin{array}{@{}cc@{}}
                     I_{(k-1)r} &  \\
                      & P \\
                   \end{array}\right],~   
N_{i}(P) = \left[\begin{array}{@{}cccc@{}}
                 I_{(k-i-1)r} &  & & \\
                 & P & I_{r} & \\
                 & I_{r} & 0  & \\
                 &   &   & I_{(i-1)r}\\
            \end{array}\right] ~\mbox{for}~ i= 1,\ldots,k-1,
\]
\[
N_{-k}(P) = \left[\begin{array}{@{}cc@{}}
                   P &  \\
                     & I_{(k-1)r} \\
             \end{array}\right],  ~ 
N_{-i}(P) = \left[\begin{array}{@{}c@{\;}ccc@{}}
               I_{(k-i-1)r} &  & & \\
                 & 0 & I_{r} & \\
                 & I_{r} & P  &  \\
                 &   &   & I_{(i-1)r}\\
              \end{array}\right] ~\mbox{for}~ i= 1,\ldots, k-1.
\]
Note that, for any  matrix $P \in \mathbb{C}^{r \times r}$ and $i=1,\ldots,k-1$, $N_{i}(P)$ and $N_{-i}(P)$ are non-singular  with $(N_{i}(P))N_{-i}(-P)=I_{rk}$ while $N_{0} (P)$ and $N_{-k}(P)$ are non-singular exactly when $P$ is non-singular. Further, $N_{i}(P)N_{j}(Q)$ commute for any $P, Q \in \mathbb{C}^{r \times r}$ if $||i|-|j|| > 1$,  see \cite{BDFR2015}. For $i=-k,\ldots,-1,0,1,\ldots,k-1$ and $D(\lambda)$ we define \cite{BDFR2015}
 \[
 N_{i}^{D}   =
     \left\{ \begin{array}{ll}
              N_i(-D_i) & \mbox{for}~ i= 0,1,\ldots,k-1\\
              N_i(D_{-i}) & \mbox{if}~ i=-1,-2,\ldots,-k.
              \end{array}
        \right.
\]
In such a case $ N_{i}^{D} $ for $i=-k,\dots,-1,0,1,\ldots,k-1$ are exactly the Fiedler matrices associated with $D(\lambda)$ see \cite{TDM2010}.

\subsection{Index tuples}
We now introduce index tuples and their characteristics. These are important for constructing generalized Fiedler pencils with repetitions.

\begin{definition}[Permutations \& Sub-permutations, \cite{AB2018}]
Consider a finite set $H$. A permutation of $H$ is a bijective function $\gamma : H \rightarrow H$. If $G\subseteq H$ and $\delta$ is a permutation of $G$ then $\delta$ is said to be a sub-permutation of $\gamma$.  

Let $\delta_{1}$ and $\delta_{2}$ be two sub-permutations of $\gamma$ associated with the subsets $G_{1}$ and $G_{2}$ of $H$. If $\delta_{1}\cap\delta_{2}=G_{1}\cap G_{2}=\emptyset$, then the sub-permutations $\delta_{1}$ and $\delta_{2}$ are said to be disjoint. Further if for any sub-permutations $\delta_{1}$ and $\delta_{2}$, $ \gamma = (\delta_{1} , \delta_{2})$ constitutes a permutation of $H$, then $\delta_{1}$ and $\delta_{2}$ are called a partition of $\gamma.$
\end{definition}

It is to be noted here that the empty permutation is denoted by the symbol $\emptyset$.

\begin{definition} [Index Tuple, \cite{AB2018}] 
An index tuple is an ordered tuple $\mathbold{t} =(t_1 , t_2 ,\ldots , t_p)$ with entries from $\mathbb{Z}$, that is, for $i=1,2,\ldots,p$, $t_{i}\in\mathbb{Z}$. Following operations are defined on the index tuples:
\begin{itemize}
\item $- \mathbold{t} = ( -t_1, -t_2, \ldots , -t_p)$,

\item $rev(\mathbold{t})=( t_p ,  \ldots, t_2, t_1),$ 

\item for any $k\in\mathbb{Z}$, $\mathbold{t} + k= ( t_1 +k , t_2+k, \ldots , t_p +k)$,  

\item for two index tuples $\mathbold{t} = ( t_1  , \ldots , t_p )$ and $\mathbold{s} = ( s_1  , \ldots , s_q)$, we define  $\mathbold{t} \cup \mathbold{s} = (\mathbold{t} ,\mathbold{s}) = ( t_1  , \ldots , t_p , s_1  , \ldots , s_q)$.
\end{itemize}
\end{definition}

For an index tuple we next define its SIP and csf. These two notions will be widely used in the following sections.

\begin{definition} [\cite{BT2014,VA2011}] Let $h\in\mathbb{Z}$ and $h\geq0$. Consider an index tuple $\mathbold{\gamma} = (j_1, j_2, \ldots , j_p)$ with entries from the set $\{0,1,\ldots, h\}$.
\begin{itemize}
\item  If for any two indices $j_{s}, j_{t}$ in $\gamma$ where $1 \leq s<t \leq p$ and $j_s = j_t,$ we can find an index $j_r = j_s +1$ such that $s < r <t$, then we say that $\mathbold{\gamma}$ satisfies the Successor Infix Property (SIP).  

If an index tuple $\mathbold{\gamma}$ with entries from  $\{-h,-h+1,\ldots ,-1 \}$ is such that $\mathbold{\gamma}+h$ satisfies the SIP, then we say that $\mathbold{\gamma}$ satisfies the SIP.

\item The column standard form of $\mathbold{\gamma}$ denoted by $csf(\mathbold{\gamma})$ is
\[
\mathbold{\gamma}= ( m_{q}:n_{q}, m_{q-1}: n_{q-1}, \ldots , m_1 : n_1),
\]
with $0 \leq n_1 < \cdots < n_s \leq h $ and $0 \leq m_{i} \leq n_{i} ,$ for all $i =1 , \ldots ,q.$ 

If and index tuple $\mathbold{\gamma}$ with entries form $\{-h,-h+1,\ldots ,-1 \}$ be such that $\mathbold{\gamma} + h$ is in column standard form then $\mathbold{\gamma}$ is said to be in column standard.
\end{itemize}
\end{definition}

\begin{example}
{\rm
   $\mathbold{\gamma}=(0,1,0,2,1)$ is an index tuple satisfying the SIP property.}
\end{example}

\begin{definition} [\cite{BDFR2015}]  Consider two index tuples $\mathbold{\gamma}_{1}$ and $\mathbold{\gamma}_{2}$. If either $\mathbold{\gamma}_{1} = \mathbold{\gamma}_{2}$ or $\mathbold{\gamma}_{1}$ can be obtained from $\mathbold{\gamma}_{2}$ by removing some entries of $\mathbold{\gamma}_{2}$ then we say that $\mathbold{\gamma}_{1}$ is a subtuple of $\mathbold{\gamma}_{2}$.
\end{definition}

\begin{example}
{\rm
Let $\mathbold{\gamma} = ( 0,1,0,2,1)$. Then  $(0,1,2)$ is a subtuple of $\mathbold{\gamma}$ whereas $ ( 2,1,0)$ is not a subtuple of $\mathbold{\gamma}.$}
\end{example}

The following definitions of consecutive inversions and consecutive consecutions are an important property of an index tuple. They will be used widely in the coming sections.

\begin{definition}[Consecutions \& inversions, \cite{DA2019Automatic}] \label{coninvoftuple_RCh4} 
Consider an index tuple $\mathbold{\gamma}$ with entries from $\{ 0,1,\ldots, p\}$. If $r \in \mathbold{\gamma}$ be such that $ ( r , r+1 , \ldots, r+s)$ is a subtuple of $\mathbold{\gamma}$ but $ ( r , r+1 , \ldots, r+s, r+s+1)$ is not a subtuple of $\mathbold{\gamma}$ then we say that $\mathbold{\gamma}$ has $s$ consecutive consecutions at $r$. We use $ c_{r}(\mathbold{\gamma})$ to denote the number of consecutive consecutions of $\mathbold{\gamma}$ at $r$. 

Similarly, if $ (r+s, \ldots, r+1, r)$ is a subtuple of $\mathbold{\gamma}$ and $ (r+s+1, r+s, \ldots, r+1, r)$ is not a subtuple of $\mathbold{\gamma}$ then $\mathbold{\gamma}$ is said to have $s$ consecutive inversions at $r$. We use $i_{r}(\mathbold{\gamma})$ to denote the number of consecutive inversions of $\mathbold{\gamma}$ at $r$.

If $k \in \{ 0,1,\ldots, p\} $ and $ k \notin \mathbold{\gamma}$, we define $ c_k(\mathbold{\gamma}) =-1$ and $i_k(\mathbold{\gamma}) = -1.$
\end{definition}

\begin{example}
{\rm
Consider $\mathbold{\gamma} = ( 3,4,1,6,2,3,1,2,4,5,2)$ with entries from $\{0,1,2,3,4,5,6\}$. Then 
\begin{itemize}
\item $ c_{1}(\mathbold{\gamma}) = 4$ as $(1,2,3,4,5)$ is a subtuple of $\mathbold{\gamma}$ and $(1,2,3,4,5,6)$ is not a subtuple of $\mathbold{\gamma}$. 

\item Similarly, $ i_{2}(\mathbold{\gamma}) = 2$ as $(4,3,2)$ is a subtuple of $\mathbold{\gamma}$ and $(5,4,3,2)$ is not a subtuple of $\mathbold{\gamma}$.

\item As $0 \notin \mathbold{\gamma}$ we have  $c_{0}(\mathbold{\gamma}) = -1 $ and $i_{0}(\mathbold{\gamma}) =-1.$
\end{itemize}}
\end{example}


We construct a new class of Fiedler-like pencils called generalized Fiedler penicls with repitition (GFPRs) for rational matrices. This is how we go about it.

\begin{definition}[\cite{BDFR2015}, Matrix Assignment]
Consider and index tuple $ \mathbold{p}=(p_{1},p_{2},\ldots,p_{k})$ with entries from $\{-m,\ldots,-1,0,1,\ldots,m-1\}$ and a tuple of $n\times n$ matrices $P=\left(P_{1},P_{2},\ldots,P_{k}\right)$. The matrix assignment of $P$ for $\mathbold{p}$ is the product $M_{\mathbold{p}}\left(P\right)= M_{p_{1}}\left(P_{1}\right)M_{p_{2}}\left(P_{2}\right)\cdots M_{p_{k}}\left(P_{k}\right)$ and the matrix $P_{j}$ is said to be assigned to position $i$ in $\mathbold{p}$.

If the matrices assigned by $P$ to the positions $-m$ and $0$ are non-singular, the matrix assignment $P$ is nonsingular. Further, we define $rev\left(P\right)=\left(P_{k},\ldots,P_{2},P_{1}\right)$.    

For an index tuple $\mathbold{p}$, the trivial matrix assignment associated with the matrix polynomial $A(\lambda)$ is the one where $M_{p_{i}}(P_{i})=M^{A}_{p_{i}}$ for $i=1,2,\ldots,k$. Further, we define $M_{\mathbold{p}}^{A}=M^{A}_{p_{1}}M^{A}_{p_{2}}\cdots M^{A}_{p_{k}}$.
\end{definition}

Next we construct the GFPR for $G(\lambda)$ defined in (\ref{trfunction_chap05}) where $A(\lambda)\in\mathbb{C}[\lambda]^{n\times n}$ is regular with degree $m$, $D(\lambda)\in\mathbb{C}[\lambda]^{r\times r}$ with degree $k$, $C\in\mathbb{C}^{r\times n}$ and $B\in\mathbb{C}^{n\times r}$.

\begin{definition}[GFPR of $G(\lambda)$]\label{gfprdefinition}
    Let $h\in\{0,1,\ldots, m-1\}$. Let $\mathbold{\sigma}$ be a permutation of $\{0,1,\ldots,h\}$ and $\mathbold{\tau}$ be a permutation  $\{-m,-m+1,\ldots,-h-1\}$. Consider the index tuples $\mathbold{\sigma}_{1}$ and $\mathbold{\sigma}_{2}$ with entries from $\{0,1,\dots,h-1\}$ with  $(\mathbold{\sigma}_{1},\mathbold{\sigma},\mathbold{\sigma}_{2})$ satisfying the SIP. Similarly, consider index tuples $\mathbold{\tau}_{1}$ and $\mathbold{\tau}_{2}$ with entries from $\{-m,-m+1,\ldots,-h-2\}$ with $(\mathbold{\tau}_{1},\mathbold{\tau},\mathbold{\tau}_{2})$ satisfying the SIP. Consider the matrix assignments $X^{A}_{1}$, $X^{A}_{2}$, $Y^{A}_{1}$ and $Y^{A}_{2}$ for $\mathbold{\sigma}_{1},\mathbold{\sigma}_{2},\mathbold{\tau}_{1}$ and $\mathbold{\tau}_{2}$, respectively. Let $\ell\in\{0,1,\ldots, k-1\}$, and let $\mathbold{\gamma}$ be a permutation of $\{0,1,\dots,\ell\}$ and $\mathbold{\delta}$ be a permutation of $\{-k,-k+1,\ldots,-\ell-1\}$. consider the index tuples $\mathbold{\gamma}_{1}$ and $\mathbold{\gamma}_{2}$ with entries from $\{0,1,\ldots,\ell-1\}$ with $(\mathbold{\gamma}_{1},\mathbold{\gamma},\mathbold{\gamma}_{2})$ satisfying the SIP. Similarly, consider index tuples $\mathbold{\delta}_{1}$ and $\mathbold{\delta}_{2}$ with entries from $\{-k,-k+1,\ldots,-\ell-2\}$ with $(\mathbold{\delta}_{1},\mathbold{\delta},\mathbold{\delta}_{2})$ satisfying the SIP. Let $\mathbold{\gamma}_{1},\mathbold{\gamma}_{2},\mathbold{\delta}_{1}$ and $\mathbold{\delta}_{2}$ have matrix assignments $X^{D}_{1}$, $X^{D}_{2}$, $Y^{D}_{1}$ and $Y^{D}_{2}$ respectively. The pencil,
    \begin{equation}
     \mathbb{L}(\lambda)=
        \left[\begin{array}{c|c}
            L_{A}(\lambda) & e_{m-i_{0}(\mathbold{\sigma}_{1},\mathbold{\sigma})}e^{T}_{k-c_{0}(\mathbold{\gamma},\mathbold{\gamma}_{2})}\otimes -B \\ \hline
            e_{k-i_{0}(\mathbold{\gamma}_{1},\mathbold{\gamma})}e^{T}_{m-c_{0}(\mathbold{\sigma},\mathbold{\sigma}_{2})} & L_{D}(\lambda)
        \end{array}\right],
    \end{equation}
    where $L_{A}(\lambda)=M_{(\mathbold{\tau_{1}},\mathbold{\sigma}_{1})}(Y^{A}_{1},X^{A}_{1})\left(\lambda M^{A}_{\mathbold{\tau}}-M^{A}_{\mathbold{\sigma}}\right)M_{(\mathbold{\sigma}_{2},\mathbold{\tau}_{2})}(X^{A}_{2},Y^{A}_{2})$ and \\ $L_{D}(\lambda)=N_{(\mathbold{\delta}_{1},\mathbold{\gamma}_{1})}(Y^{D}_{1},X^{D}_{1})(\lambda N^{D}_{\mathbold{\delta}}-N^{D}_{\mathbold{\gamma}})N_{(\mathbold{\gamma}_{2},\mathbold{\delta}_{2})}(X^{D}_{2},Y^{D}_{2})$ is called a generalized Fiedler pencil with repeatition (GFPR) of $G(\lambda)$. We also refer to $\mathbb{L}(\lambda)$ as a GFPR of $\mathcal{S}(\lambda)$.
\end{definition}

\begin{remark}
{\rm
    Note that, the pencils $L_{A}(\lambda)$ and $L_{D}(\lambda)$ are the GFPRs of $A(\lambda)$ and $D(\lambda)$, respectively.}
\end{remark}

\begin{example}
    Let $G(\lambda)=D(\lambda)+CA(\lambda)^{-1}B\in\mathbb{C}(\lambda)^{r\times r}$ where $A(\lambda)=\lambda^{5}A_{5}+\lambda^{4}A_{4}+\lambda^{3}A_{3}+\lambda^{2}A_{2}+\lambda A_{1}+A_{0}\in\mathbb{C}[\lambda]^{n\times n}$, $D(\lambda)=\lambda^{4}D_{4}+\lambda^{3}D_{3}+\lambda^{2}D_{2}+\lambda D_{1}+D_{0}\in\mathbb{C}[\lambda]^{r\times r}$, $C\in\mathbb{C}^{r\times n}$, and $B\in\mathbb{C}^{n\times r}$. If we cosider $h=2$, $\mathbold{\sigma}=(1,0,2)$, $\mathbold{\tau}=(-3,-4,-5)$, $\mathbold{\sigma}_{1}=(0)$, $\mathbold{\sigma}_{2}=(1)$, $\mathbold{\tau}_{1}=(-5)$, and $\mathbold{\tau}_{2}=\phi$ and $\ell=3$, $\mathbold{\gamma}=(1,2,3,0)$, $\mathbold{\delta}=(-4)$, $\mathbold{\gamma}_{1}=\mathbold{\tau}_{1}=\mathbold{\tau}_{2}=\phi$, and $\mathbold{\gamma}_{2}=(2,1)$. Then, GFPR of $G(\lambda)$ is given by,
    \begin{align*}
    &\mathbb{L}(\lambda)
    =\left[\begin{array}{c|c}
            L_{A}(\lambda) & e_{m-i_{0}(\mathbold{\sigma}_{1},\mathbold{\sigma})}e^{T}_{k-c_{0}(\mathbold{\gamma},\mathbold{\gamma}_{2})}\otimes -B \\ \hline
            e_{k-i_{0}(\mathbold{\gamma}_{1},\mathbold{\gamma})}e^{T}_{m-c_{0}(\mathbold{\sigma},\mathbold{\sigma}_{2})} & L_{D}(\lambda)
        \end{array}\right]\\
    &=\left[\begin{array}{c|c}
            M_{(-5,0)}(Y_{1},X_{1})\left(\lambda M^{A}_{(-3,-4,-5)}-M^{A}_{(1,0,2)}\right)M_{1}(X_{2}) & e_{5-i_{0}(0,1,0,2)}e^{T}_{4-c_{0}(1,2,3,0,2,1)}\otimes -B \\ \hline
            e_{4-i_{0}(1,2,3,0)}e^{T}_{5-c_{0}(1,0,2,1)}\otimes C & (\lambda N^{D}_{-4}-N^{D}_{(1,2,3,0)})N_{(2,1)}(X,Y)
        \end{array}\right]
\end{align*}

\[=\left[\begin{array}{ccccc|cccc}
        -Y_{1} & \lambda Y_{1} & 0 & 0 & 0 & 0 & 0 & 0 & 0 \\
        0 & -I_{n} & \lambda I_{n} & 0 & 0 & 0 & 0 & 0 & 0 \\
        \lambda A_{5} & \lambda A_{4} & \lambda A_{3}+A_{2} & -X_{2} & -I_{n} & 0 & 0 & 0 & 0 \\
        0 & 0 & A_{1} & \lambda X_{2}+A_{0} & \lambda I_{n} & 0 & 0 & -B & 0\\
        0 & 0 & -X_{1} & \lambda I_{n} & 0 & 0 & 0 & 0 & 0 \\ \hline
        0 & 0 & 0 & 0 & 0 & \lambda D_{4} + D_{3} & -X & -Y & -I_{n} \\
        0 & 0 & 0 & 0 & 0 & D_{2} & \lambda X- I_{n} & \lambda Y & \lambda I_{n} \\
        0 & 0 & 0 & C & 0 & D_{1} & \lambda I_{n} & A_{0} & 0 \\
        0 & 0 & 0 & 0 & 0 & -I_{n} & 0 & \lambda I_{n} & 0 
      \end{array}\right].
\]
\end{example}

\subsection{GFPRs are linearizations}
Now we focus on showing that the GFPRs defined in 
Definition~\ref{gfprdefinition} are linearizations of $G(\lambda). $
We state here a few important results and theorems before going to the main proof.

\begin{lemma}[\cite{TDM2010}]\label{unimodforfiedler}
Let $\mathbold{\alpha}$ be a permutation of $\{0,1,\ldots,m-1\}$ and let $L_{A}(\lambda)=\lambda M^{A}_{-m}-M^{A}_{\mathbold{\alpha}}$ be the Fiedler pencil of $A(\lambda)$ corresponding to $\mathbold{\alpha}$ Then, $L_{A}(\lambda)$ is a linearization of $A(\lambda)$. Hence, there exist unimodular matrix polynomials $U_{A}(\lambda)$ and $V_{A}(\lambda)$ such that,
\[
U_{A}(\lambda)L_{A}(\lambda) V_{A}\lambda)
     =\left[\begin{array}{c|c}
            I_{(m-1)n} &  \\\hline
               & A(\lambda)
      \end{array}\right].
\]
Further,
\begin{equation}
\begin{aligned}\label{aunimodular}
   U_{A}\left(\lambda)^{-1} (e_{m}\otimes I_{n}\right) 
   &=\begin{cases}
       e_{m}\otimes I_{n} \,\,\,\,\ & \text{if $c_{0}(\mathbold{\alpha})>0$}\\
       e_{m-i_{0}(\mathbold{\alpha)}}\otimes I_{n} & \text{if $c_{0}(\mathbold{\alpha})=0$}
   \end{cases}\\
   \left(e^{T}_{m}\otimes I_{n}\right) V_{A}^{-1}(\lambda) &= e^{T}_{m-c_{0}(\mathbold{\alpha)}}\otimes I_{n}.
\end{aligned}
\end{equation}
\end{lemma}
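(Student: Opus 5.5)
The plan is to prove the lemma by induction on $m$, following the Horner-shift construction of De Ter\'an, Dopico and Mackey \cite{TDM2010}, and to track the block rows and columns that carry $A(\lambda)$ while doing so. The first step is a reduction using commutativity. Because $M^{A}_i$ and $M^{A}_j$ commute whenever $||i|-|j||>1$, the product $M^{A}_{\mathbold{\alpha}}$ depends only on which pairs $(i,i+1)$ appear as a consecution and which as an inversion in $\mathbold{\alpha}$. Hence we may replace $\mathbold{\alpha}$ by its column standard form
\[
\mathbold{\alpha}\sim(m_q:n_q,\ldots,m_1:n_1),
\]
which consists of blocks of consecutive increasing indices. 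For the theorem itself we induct on $m$. The case $m=1$ is trivial, since then $L_A(\lambda)=\lambda A_1+A_0$. For the inductive step we peel off the index $m-1$. Let $A^{(1)}(\lambda)=\lambda A_m+A_{m-1}$ be the first Horner shift, and consider the associated polynomial of degree $m-1$ formed with this leading part.

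The second step is the explicit construction of the unimodular factors. For the elementary pencil $\lambda M_{-m}-M_{\mathbold{\alpha}}$ we write $U_A$ and $V_A$ as products of constant matrices $M_j$, which are permutation-like, and of block elementary matrices of the form
\[
\left[\begin{array}{cc} I & \lambda I\\ 0 & I\end{array}\right]
\qquad\text{or}\qquad
\left[\begin{array}{cc} I & 0\\ \lambda I & I\end{array}\right],
\]
applied in the positions prescribed by the csf. Each such factor eliminates one identity block and moves the accumulated Horner shift $A^{(j)}(\lambda)=\lambda A^{(j-1)}(\lambda)+A_{m-j}$ one block further down. After all steps we obtain $\operatorname{diag}(I_{(m-1)n},A(\lambda))$, so $L_A$ is a linearization. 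The determinant of each factor is constant, so $U_A$ and $V_A$ are unimodular.

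The third step, and the real content of the lemma beyond \cite{TDM2010}, is formula (\ref{aunimodular}). We read it off from how the last block row and column move during the reduction. Near the index $0$, the structure of $\mathbold{\alpha}$ is governed by $c_0(\mathbold{\alpha})$ and $i_0(\mathbold{\alpha})$.

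\begin{itemize}
\item The column side depends on the consecutions. If $\mathbold{\alpha}$ contains the consecutive run $(0,1,\ldots,c_0)$, the factor $M_0M_1\cdots M_{c_0}$ on the right shifts the block column in which $A(\lambda)$ eventually sits. This block column ends up at block column $m-c_0(\mathbold{\alpha})$ of $L_A$. Hence $(e_m^T\otimes I_n)V_A^{-1}=e^T_{m-c_0(\mathbold{\alpha})}\otimes I_n$.
\item The row side depends on the inversions. Dually, an inversion run $(i_0,\ldots,1,0)$ moves the block row. However, this happens only when $0$ is followed immediately by $1$ is false, that is, when $c_0(\mathbold{\alpha})=0$.
\item When $c_0(\mathbold{\alpha})>0$, the pair $(0,1)$ is a consecution. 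Then $i_0(\mathbold{\alpha})=0$, and the row is unmoved, giving $e_m\otimes I_n$.
\end{itemize}

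To verify both claims we compute $U_A^{-1}(e_m\otimes I_n)$ as the last block column of $U_A^{-1}$. This is cleaner than working with $U_A$ directly, because $U_A^{-1}$ is a product of the inverse elementary factors, and only those factors acting on block index $m$ can move it.

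The main obstacle is the bookkeeping in this third step. We must make sure that the elementary factors chosen for the linearization are exactly the ones that do not disturb the relevant unit block vectors beyond the predicted shift. I would first handle the two extreme cases, $\mathbold{\alpha}=(0,1,\ldots,m-1)$ and $\mathbold{\alpha}=(m-1,\ldots,0)$, by direct computation. The general case then follows from the csf decomposition, since only the block $m_1:n_1$ containing $0$ affects positions $m$ through $m-c_0$ or $m-i_0$.
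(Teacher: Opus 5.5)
The paper gives no proof of this lemma. It is imported from De Ter\'an, Dopico and Mackey, where the block indices $m-c_0(\alpha)$ and $m-i_0(\alpha)$ are the recovery rules for right and left eigenvectors. So your proposal has to stand on its own, and it has a genuine gap exactly where you locate the real content.

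\textbf{The boundary formulas are asserted, not proved, and your route to them fails.} The formulas for $U_A^{-1}(e_m\otimes I_n)$ and $(e_m^T\otimes I_n)V_A^{-1}$ are never established. The route you suggest for the general case rests on a false claim: that only the csf block $(0:n_1)$ containing $0$ governs the positions $m-c_0$ and $m-i_0$.
\begin{itemize}
\item For columns this is right, since $c_0(\alpha)=n_1$.
\item For rows it is wrong. When $c_0(\alpha)=0$ this block is the singleton $(0)$, and $i_0(\alpha)$ is decided by how many of the following csf blocks $(1),(2),\dots$ are singletons.
\item Concretely, for $m=3$ both $\alpha=(2,1,0)$ and $\alpha=(1,2,0)$ end in the block $(0)$, but $i_0=2$ and $i_0=1$. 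Computing the left null vectors of $L_A$ directly shows that the left eigenvector of $A(\lambda)$ sits in block $1$, respectively block $2$. So the two companion forms do not determine the general case.
\end{itemize}
Your remark that only factors acting on block $m$ can move $e_m\otimes I_n$ is also inaccurate. Once moved, the vector sits at another block. Moreover, a $\lambda$-dependent elementary factor would in general turn a unit block vector into a polynomial one, and ruling that out is precisely what must be shown.

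\textbf{The induction needs a stronger hypothesis.} Write $M^A_\alpha=M^A_{m-1}M^A_{\alpha'}$ or $M^A_{\alpha'}M^A_{m-1}$, according as $m-1$ precedes or follows $m-2$, where $M^A_{\alpha'}=\mathrm{diag}(I_n,\widehat M_{\alpha'})$. Multiply by $(M^A_{m-1})^{-1}$ on that side and clear the resulting $-I_n$. This gives $\mathrm{diag}(-I_n,\widehat L(\lambda))$ with $\widehat L(\lambda)=\lambda\,\mathrm{diag}(\lambda A_m+A_{m-1},I_{(m-2)n})-\widehat M_{\alpha'}$. That is quadratic, not a Fiedler pencil, so the lemma as stated cannot be applied to it. You must first allow a polynomial leading coefficient; this is harmless because nothing is ever inverted.

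\textbf{The missing idea is to carry the boundary formulas through this induction rather than read them off at the end.}
\begin{itemize}
\item The two clearing factors are block unitriangular and couple only the first block with the others. Hence they fix every block column vector, respectively block row vector, whose first block vanishes.
\item By the inductive hypothesis, the last block column of $\mathrm{diag}(-I_n,\widehat U^{-1})$ is $e_{m-i_0(\alpha')}\otimes I_n$. The last block row of $\mathrm{diag}(I_n,\widehat V^{-1})$ is $e^T_{m-c_0(\alpha')}\otimes I_n$. Both have vanishing first block.
\item The only factor that can move them is the constant $M^A_{m-1}$. It enters $U_A^{-1}$ on the left when $m-1$ precedes $m-2$, and $V_A^{-1}$ on the right otherwise. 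In both cases it sends block $2$ to block $1$ and fixes blocks $3,\dots,m$.
\item Now $i_0(\alpha)=m-1$ exactly when $i_0(\alpha')=m-2$ and $m-1$ precedes $m-2$. Likewise $c_0(\alpha)=m-1$ exactly when $c_0(\alpha')=m-2$ and $m-2$ precedes $m-1$. In every other situation $i_0(\alpha)=i_0(\alpha')$, respectively $c_0(\alpha)=c_0(\alpha')$.
\end{itemize}
Together these give both formulas by induction from the trivial case $m=1$. The split between $c_0(\alpha)>0$ and $c_0(\alpha)=0$ in the statement is cosmetic, since $c_0(\alpha)>0$ forces $i_0(\alpha)=0$.
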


Similar result holds for the matrix polynomial $D(\lambda)$. That is, if $\mathbold{\beta}$ is a permutation of $\{0,1,\ldots,k-1\}$  and $L_{D}(\lambda)=\lambda N^{D}_{-k}-N^{D}_{\mathbold{\beta}}$ is a Fiedler pencil of $D(\lambda)$ associated with $\mathbold{\beta}$ then $L_{D}(\lambda)$ is a linearization of $D(\lambda)$. Hence, there exist unimodular matrix polynomials $U_{D}(\lambda)$ and $V_{D}(\lambda)$ such that,
\[
U_{D}(\lambda)L_{D}(\lambda) V_{D}\lambda)
     =\left[\begin{array}{c|c}
            I_{(k-1)r} &  \\\hline
               & D(\lambda)
      \end{array}\right].
\]
Further,
\begin{equation}\label{dunimodular}
\begin{aligned}
   U_{D}\left(\lambda)^{-1} (e_{k}\otimes I_{r}\right) 
   &=\begin{cases}
       e_{k}\otimes I_{r} \,\,\,\,\ & \text{if $c_{0}(\mathbold{\beta})>0$}\\
       e_{k-i_{0}(\mathbold{\beta})}\otimes I_{r} & \text{if $c_{0}(\mathbold{\beta})=0$}
   \end{cases}\\
   \left(e^{T}_{k}\otimes I_{r}\right) V_{D}^{-1}(\lambda) &= e^{T}_{k-c_{0}(\mathbold{\beta})}\otimes I_{r}.
\end{aligned}
\end{equation}

\begin{theorem}\label{fiedlerproof}
Let $\mathbold{\alpha}$ be a permutation of $\{0,1,\dots,m-1\}$ and $T_{A}(\lambda)=\lambda M^{A}_{-m}-M^{A}_{\mathbold{\alpha}}$ be the Fiedler pencil of $A(\lambda)$ corresponding to $\mathbold{\alpha}$. Let $\mathbold{\beta}$ be a permutation of $\{0,1,\ldots,k-1\}$ and $T_{D}(\lambda)=\lambda N^{D}_{-k}-N^{D}_{\mathbold{\beta}}$ be the Fiedler pencil of $D(\lambda)$ corresponding to $\mathbold{\beta}$. Then, the pencil $\mathbb{T}(\lambda)$ defined as,
\[
\mathbb{T}(\lambda)
 =\left[\begin{array}{c|c}
      T_{A}(\lambda) & e_{m-i_{0}(\mathbold{\alpha})}e^{T}_{k-c_{0}(\mathbold{\beta})}\otimes -B\\ \hline
      e_{k-i_{0}(\mathbold{\beta})}e^{T}_{m-c_{0}(\mathbold{\alpha})}\otimes C & T_{D}(\lambda)
  \end{array}\right]
\]
is a Rosenbrock linearization of $G(\lambda).$
\end{theorem}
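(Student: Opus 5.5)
We build explicit unimodular block-diagonal transformations out of those furnished by Lemma~\ref{unimodforfiedler} and its analogue for $D(\lambda)$, and show they carry $\mathbb{T}(\lambda)$ to a block form in which $\mathcal{S}(\lambda)$ appears next to an identity. Recall that a Rosenbrock linearization of $G(\lambda)$ is a pencil $\mathbb{T}(\lambda)$ with unimodular $\mathcal{U}(\lambda),\mathcal{V}(\lambda)$ such that
\[
\mathcal{U}(\lambda)\,\mathbb{T}(\lambda)\,\mathcal{V}(\lambda)=\mathrm{diag}\bigl(I_{(m-1)n},\, I_{(k-1)r},\, \mathcal{S}(\lambda)\bigr)
\]
up to a permutation of block rows and columns. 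The transformations should moreover preserve the blocks $\left[\begin{array}{cc} 0 & -B\\ C & 0\end{array}\right]$, so that the transfer function $CA(\lambda)^{-1}B+D(\lambda)$ is retained.

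\textbf{Step 1: the transformations.} Let $U_A,V_A$ be the unimodular matrices of Lemma~\ref{unimodforfiedler} for $T_A(\lambda)$ with permutation $\mathbold{\alpha}$, and $U_D,V_D$ those of the analogous statement for $T_D(\lambda)$ with $\mathbold{\beta}$. Set $\mathcal{U}=\mathrm{diag}(U_A,U_D)$ and $\mathcal{V}=\mathrm{diag}(V_A,V_D)$. Then the diagonal blocks of $\mathcal{U}\,\mathbb{T}\,\mathcal{V}$ become $\mathrm{diag}(I_{(m-1)n},A(\lambda))$ and $\mathrm{diag}(I_{(k-1)r},D(\lambda))$.

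\textbf{Step 2: the off-diagonal blocks.} This is the step needing care. The $(1,2)$ block becomes
\[
U_A(\lambda)\,(e_{m-i_0(\mathbold{\alpha})}\otimes I_n)\,(-B)\,(e^T_{k-c_0(\mathbold{\beta})}\otimes I_r)\,V_D(\lambda).
\]
By \eqref{dunimodular}, $(e^T_{k-c_0(\mathbold{\beta})}\otimes I_r)V_D=e^T_k\otimes I_r$. For the left factor, \eqref{aunimodular} gives $U_A^{-1}(e_m\otimes I_n)=e_{m-i_0(\mathbold{\alpha})}\otimes I_n$ when $c_0(\mathbold{\alpha})=0$. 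When $c_0(\mathbold{\alpha})>0$, we have $i_0(\mathbold{\alpha})=0$, since $0$ occurs only once in a permutation and $1$ precedes it in at most one direction. Hence in both cases $U_A(e_{m-i_0(\mathbold{\alpha})}\otimes I_n)=e_m\otimes I_n$. The $(1,2)$ block is therefore $e_me_k^T\otimes(-B)$, and symmetrically the $(2,1)$ block is $e_ke_m^T\otimes C$. One must verify this identification of cases with the convention $i_0=0$, i.e.\ that when $c_0(\mathbold{\alpha})>0$ the index $e_{m-i_0}$ reduces to $e_m$. This bookkeeping of $c_0$ and $i_0$ is the main obstacle, since it is where the positions of $B$ and $C$ in $\mathbb{T}(\lambda)$ must match exactly the columns and rows that the unimodular transformations send to the last block positions.

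\textbf{Step 3: conclusion.} After a block permutation moving the $I_{(m-1)n}$ and $I_{(k-1)r}$ blocks to the front, the transformed pencil equals
\[
\mathrm{diag}\!\left(I_{(m-1)n},\,I_{(k-1)r},\,\left[\begin{array}{cc}A(\lambda)&-B\\ C&D(\lambda)\end{array}\right]\right)=\mathrm{diag}\bigl(I,\mathcal{S}(\lambda)\bigr).
\]
This proves that $\mathbb{T}(\lambda)$ is unimodularly equivalent to $\mathrm{diag}(I,\mathcal{S}(\lambda))$, and hence is a Rosenbrock linearization of $G(\lambda)$. If strong linearization at infinity is also required, the same argument is applied to the reversal pencil using the reversed Fiedler pencils, which are again Fiedler pencils of $\mathrm{rev}\,A$ and $\mathrm{rev}\,D$.
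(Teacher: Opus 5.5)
Your proof is correct and is essentially the paper's argument. Both use block-diagonal unimodular transformations from Lemma~\ref{unimodforfiedler} and its $D(\lambda)$-analogue, with \eqref{aunimodular} and \eqref{dunimodular} reducing the coupling blocks to $e_me_k^T\otimes(-B)$ and $e_ke_m^T\otimes C$. The paper builds the block reversal $J\otimes I_r$ into the $D$-part of $\mathbb{U}(\lambda),\mathbb{V}(\lambda)$ to reach $\mathrm{diag}(I_{(m-1)n},\mathcal{S}(\lambda),I_{(k-1)r})$ directly, whereas you permute afterwards; this difference is only cosmetic.

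Your observation that $c_0(\mathbold{\alpha})>0$ forces $i_0(\mathbold{\alpha})=0$ handles all cases at once, including the consecution/inversion combination the paper's three cases leave out. The closing comment about behaviour at infinity is not needed for this statement.
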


\begin{proof}
Define $\mathbb{U}(\lambda)=\left[\begin{array}{c|c} U_{A}(\lambda) & \\ \hline  & \left(J\otimes I_{r}\right)U_{D}(\lambda)\end{array}\right]$ and $\mathbb{V}(\lambda)=\left[\begin{array}{c|c} V_{A}(\lambda) & \\\hline  & V_{D}(\lambda)\left(J\otimes I_{r}\right)\end{array}\right]$ where $U_{i}(\lambda)$ and $V_{i}(\lambda)$ for $i\in\{A,D\}$ are the matrices defined in Lemma (\ref{unimodforfiedler}) and $J=\left[\begin{array}{ccc} 0 & \cdots & 1 \\ \vdots & \iddots & \vdots \\ 1 & \cdots & 0\end{array}\right]$. Then,
\begin{equation}\label{proofstatement01}
\begin{aligned}
\mathbb{U}(\lambda)\mathbb{T}(\lambda)\mathbb{V}(\lambda)=
&\left[\begin{array}{c|c}
    U_{A}(\lambda)T_{A}(\lambda)V_{A}(\lambda) & X_{12} \\ \hline 
    X_{21} & \left(J\otimes I_{r}\right)U_{D}(\lambda)T_{D}(\lambda)V_{D}(\lambda)\left(J\otimes I_{r}\right)
   \end{array}\right] \\
   \mbox{where } &X_{12}=U_{A}(\lambda)\left(e_{m-i_{0}(\mathbold{\alpha})}\otimes I_{n}\right)(-B)\left(e^{T}_{k-c_{0}(\mathbold{\beta})}\otimes I_{r}\right)V_{D}(\lambda)\left(J\otimes I_{r}\right) \mbox{ and}\\
   & X_{21}=\left(J\otimes I_{r}\right)U_{D}(\lambda)\left(e_{k-i_{0}(\mathbold{\beta})}\otimes I_{r}\right) C \left(e^{T}_{m-c_{0}(\mathbold{\alpha})}\otimes I_{n}\right)V_{A}(\lambda).
\end{aligned}
\end{equation}

\noindent \textbf{Case I:} Let us suppose that $\mathbold{\alpha}$ and $\mathbold{\beta}$ both have conseqution at $0$. Then, $c_{0}(\mathbold{\alpha})>0,$ $i_{0}(\mathbold{\alpha})=0$ and $c_{0}(\mathbold{\beta})>0,$ $i_{0}(\mathbold{\beta})=0$. By virtue of Equations (\ref{aunimodular}) and (\ref{dunimodular}), Equation (\ref{proofstatement01}) reduces to,
\begin{align*}
\mathbb{U}(\lambda)\mathbb{T}(\lambda)\mathbb{V}(\lambda)
&= \left[\begin{array}{c|c}
    U_{A}(\lambda)T_{A}(\lambda)V_{A}(\lambda) & X_{12} \\ \hline 
    X_{21} & \left(J\otimes I_{r}\right)U_{D}(\lambda)T_{D}(\lambda)V_{D}(\lambda)\left(J\otimes I_{r}\right)
   \end{array}\right] \\
   &\mbox{where } X_{12}=  U_{A}(\lambda)\left(e_{m}\otimes I_{n}\right)(-B)\left(e^{T}_{k-c_{0}(\mathbold{\beta})}\otimes I_{r}\right)V_{D}(\lambda)\left(J\otimes I_{r}\right) \mbox{ and}\\
   &{\hspace{1.1cm}} X_{21}=\left(J\otimes I_{r}\right)U_{D}(\lambda)\left(e_{k}\otimes I_{r}\right) C \left(e^{T}_{m-c_{0}(\mathbold{\alpha})}\otimes I_{n}\right)V_{A}(\lambda)\\
&=\left[\begin{array}{c|c}
    U_{A}(\lambda)T_{A}(\lambda)V_{A}(\lambda) & \left(e_{m}\otimes I_{n}\right)(-B)\left(e^{T}_{k}\otimes I_{r}\right)\left(J\otimes I_{r}\right) \\ \hline 
    \left(J\otimes I_{r}\right)\left(e_{k}\otimes I_{r}\right) C \left(e^{T}_{m}\otimes I_{n}\right) & \left(J\otimes I_{r}\right)U_{D}(\lambda)T_{D}(\lambda)V_{D}(\lambda)\left(J\otimes I_{r}\right)
   \end{array}\right]
\end{align*}
\begin{align*}
&\hspace{-3cm}=\left[\begin{array}{cc|cc}
          I_{(m-1)n}  & 0 & 0 & 0 \\
          0 & A(\lambda) & -B & 0 \\ \hline 
          0 & C & D(\lambda) & 0 \\ 
          0 & 0 & 0 & I_{(k-1)r}
    \end{array}\right].
\end{align*}

\noindent \textbf{Case II:} Let us suppose that $\mathbold{\alpha}$ has inversion at $0$ and $\mathbold{\beta}$ has conseqution at $0$. Then $c_{0}(\mathbold{\alpha})=i_{0}(\mathbold{\beta})=0$, $i_{0}(\mathbold{\alpha})>0$, $c_{0}(\mathbold{\beta})>0$ . Using Equations (\ref{aunimodular}) and (\ref{dunimodular}), Equation (\ref{proofstatement01}) reduces to,
\[
\mathbb{U}(\lambda)\mathbb{T}(\lambda)\mathbb{V}(\lambda)
= \left[\begin{array}{c|c}
    U_{A}(\lambda)T_{A}(\lambda)V_{A}(\lambda) & X_{12} \\ \hline 
    X_{21} & \left(J\otimes I_{r}\right)U_{D}(\lambda)T_{D}(\lambda)V_{D}(\lambda)\left(J\otimes I_{r}\right)
   \end{array}\right]
\]
\begin{align*}
   &\mbox{where} X_{12}=U_{A}(\lambda)\left(e_{m-i_{0}(\mathbold{\alpha})}\otimes I_{n}\right)(-B)\left(e^{T}_{k-c_{0}(\mathbold{\beta})}\otimes I_{r}\right)V_{D}(\lambda)\left(J\otimes I_{r}\right) \mbox{ and}\\
   &\hspace{1.1cm} X_{21}=\left(J\otimes I_{r}\right)U_{D}(\lambda)\left(e_{k}\otimes I_{r}\right) C \left(e^{T}_{m}\otimes I_{n}\right)V_{A}(\lambda)\\
&=\left[\begin{array}{c|c}
    U_{A}(\lambda)T_{A}(\lambda)V_{A}(\lambda) & \left(e_{m}\otimes I_{n}\right)(-B)\left(e^{T}_{k}\otimes I_{r}\right)\left(J\otimes I_{r}\right) \\ \hline 
    \left(J\otimes I_{r}\right)\left(e_{k}\otimes I_{r}\right) C \left(e^{T}_{m}\otimes I_{n}\right) & \left(J\otimes I_{r}\right)U_{D}(\lambda)T_{D}(\lambda)V_{D}(\lambda)\left(J\otimes I_{r}\right)
   \end{array}\right] \\
&=\left[\begin{array}{cc|cc}
          I_{(m-1)n}  & 0 & 0 & 0 \\
          0 & A(\lambda) & -B & 0 \\ \hline 
          0 & C & D(\lambda) & 0 \\ 
          0 & 0 & 0 & I_{(k-1)r}
    \end{array}\right].
\end{align*}

\noindent \textbf{Case III:} Let us suppose that $\mathbold{\alpha}$ and $\mathbold{\beta}$ both have inversion at $0$. Then, $c_{0}(\mathbold{\alpha})=0,$ $i_{0}(\mathbold{\alpha})> 0$ and $c_{0}(\mathbold{\beta})=0,$ $i_{0}(\mathbold{\beta})> 0$. Using Equations (\ref{aunimodular}) and (\ref{dunimodular}), Equation (\ref{proofstatement01}) reduces to,
\begin{align*}
\mathbb{U}(\lambda)\mathbb{T}(\lambda)\mathbb{V}(\lambda)
&=\left[\begin{array}{c|c}
    U_{A}(\lambda)T_{A}(\lambda)V_{A}(\lambda) & X_{12}\\ \hline 
    X_{21} & \left(J\otimes I_{r}\right)U_{D}(\lambda)T_{D}(\lambda)V_{D}(\lambda)\left(J\otimes I_{r}\right)
   \end{array}\right]\\
   &\mbox{where } X_{12}=U_{A}(\lambda)\left(e_{m-i_{0}(\mathbold{\alpha})}\otimes I_{n}\right)(-B)\left(e^{T}_{k}\otimes I_{r}\right)V_{D}(\lambda)\left(J\otimes I_{r}\right) \mbox{ and}\\
   &\hspace{1.1cm} X_{21}= \left(J\otimes I_{r}\right)U_{D}(\lambda)\left(e_{k-i_{0}(\mathbold{\beta})}\otimes I_{r}\right) C \left(e^{T}_{m}\otimes I_{n}\right)V_{A}(\lambda)\\
&=\left[\begin{array}{c|c}
    U_{A}(\lambda)T_{A}(\lambda)V_{A}(\lambda) & \left(e_{m}\otimes I_{n}\right)(-B)\left(e^{T}_{k}\otimes I_{r}\right)\left(J\otimes I_{r}\right) \\ \hline 
    \left(J\otimes I_{r}\right)\left(e_{k}\otimes I_{r}\right) C \left(e^{T}_{m}\otimes I_{n}\right) & \left(J\otimes I_{r}\right)U_{D}(\lambda)T_{D}(\lambda)V_{D}(\lambda)\left(J\otimes I_{r}\right)
   \end{array}\right]\\
&=\left[\begin{array}{cc|cc}
          I_{(m-1)n}  & 0 & 0 & 0 \\
          0 & A(\lambda) & -B & 0 \\ \hline 
          0 & C & D(\lambda) & 0 \\ 
          0 & 0 & 0 & I_{(k-1)r}
    \end{array}\right].
\end{align*}
Therefore, in all the cases, we see that $\mathbb{U}(\lambda)\mathbb{T}(\lambda)\mathbb{V}(\lambda)=\left[\begin{array}{c|c|c} I_{(m-1)n} &  & \\\hline  & \mathcal{S}(\lambda) & \\\hline & & I_{(k-1)r}\end{array}\right]$. This shows that $\mathbb{T}(\lambda)$ is a Rosenbrock linearization of $G(\lambda)$.
\end{proof}

\begin{lemma}\label{fiedlerdiag}
    Let $\mathbb{T}(\lambda)$ be the pencil defined in Theorem~\ref{fiedlerproof}. For nonsingular matrices $\mathcal{X}_{A},\mathcal{Y}_{A}\in\mathbb{C}^{mn\times mn}$ and $\mathcal{X}_{D},\mathcal{Y}_{D}\in\mathbb{C}^{rk\times rk}$ let $\mathbb{L}(\lambda)=\mathrm{diag}\left(\mathcal{X}_{A},\mathcal{X}_{D}\right)\mathbb{T}(\lambda)\mathrm{diag}\left(\mathcal{Y}_{A},\mathcal{Y}_{D}\right)$. Then, $\mathbb{L}(\lambda)$ is a Rosenbrock linearization of $G(\lambda)$.
\end{lemma}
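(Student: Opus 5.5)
The plan is to use directly the unimodular reduction from the proof of Theorem~\ref{fiedlerproof}. That proof produced unimodular matrix polynomials $\mathbb{U}(\lambda)$ and $\mathbb{V}(\lambda)$ with
\[
\mathbb{U}(\lambda)\mathbb{T}(\lambda)\mathbb{V}(\lambda)=\mathrm{diag}\left(I_{(m-1)n},\mathcal{S}(\lambda),I_{(k-1)r}\right).
\]
Since $\mathbb{L}(\lambda)=\mathrm{diag}(\mathcal{X}_{A},\mathcal{X}_{D})\,\mathbb{T}(\lambda)\,\mathrm{diag}(\mathcal{Y}_{A},\mathcal{Y}_{D})$, we have
\[
\mathbb{T}(\lambda)=\mathrm{diag}(\mathcal{X}_{A}^{-1},\mathcal{X}_{D}^{-1})\,\mathbb{L}(\lambda)\,\mathrm{diag}(\mathcal{Y}_{A}^{-1},\mathcal{Y}_{D}^{-1}).
\]
I would therefore set
\[
\widetilde{\mathbb{U}}(\lambda)=\mathbb{U}(\lambda)\,\mathrm{diag}(\mathcal{X}_{A}^{-1},\mathcal{X}_{D}^{-1}),\qquad
\widetilde{\mathbb{V}}(\lambda)=\mathrm{diag}(\mathcal{Y}_{A}^{-1},\mathcal{Y}_{D}^{-1})\,\mathbb{V}(\lambda).
\]
These are unimodular, because each is a product of a unimodular matrix polynomial and a constant nonsingular matrix, so its determinant is a nonzero constant. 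This gives
\[
\widetilde{\mathbb{U}}(\lambda)\mathbb{L}(\lambda)\widetilde{\mathbb{V}}(\lambda)=\mathrm{diag}\left(I_{(m-1)n},\mathcal{S}(\lambda),I_{(k-1)r}\right).
\]
So $\mathbb{L}(\lambda)$ is unimodularly equivalent to $\mathcal{S}(\lambda)$ padded with identities, which is the property used to call $\mathbb{T}(\lambda)$ a Rosenbrock linearization.

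The main point to check is whether the notion of Rosenbrock linearization requires more than unimodular equivalence. It might, for example, require the equivalence to preserve the block partition into a state part of size $mn$ and an input–output part of size $rk$, or to keep the transfer-function structure. In that case I would argue as follows. The transformations $\mathrm{diag}(\mathcal{X}_{A},\mathcal{X}_{D})$ and $\mathrm{diag}(\mathcal{Y}_{A},\mathcal{Y}_{D})$ are block diagonal with respect to exactly the same $(mn)+(rk)$ partition that $\mathbb{T}(\lambda)$ carries. Hence $\mathbb{L}(\lambda)$ is system equivalent to $\mathbb{T}(\lambda)$ in the sense of strict system equivalence by constant matrices. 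The $(1,1)$ block becomes $\mathcal{X}_{A}T_{A}(\lambda)\mathcal{Y}_{A}$, which remains a linearization of $A(\lambda)$, and the $(2,2)$ block $\mathcal{X}_{D}T_{D}(\lambda)\mathcal{Y}_{D}$ similarly remains a linearization of $D(\lambda)$.

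For the transfer function, I would compute the Schur complement of the $(1,1)$ block. Write $\mathbb{T}_{21}$ and $\mathbb{T}_{12}$ for the off-diagonal blocks of $\mathbb{T}(\lambda)$. The Schur complement of $\mathbb{L}$ is
\[
\mathcal{X}_{D}T_{D}\mathcal{Y}_{D}-\mathcal{X}_{D}\mathbb{T}_{21}\mathcal{Y}_{A}\,(\mathcal{X}_{A}T_{A}\mathcal{Y}_{A})^{-1}\,\mathcal{X}_{A}\mathbb{T}_{12}\mathcal{Y}_{D},
\]
which equals $\mathcal{X}_{D}\bigl(T_{D}-\mathbb{T}_{21}T_{A}^{-1}\mathbb{T}_{12}\bigr)\mathcal{Y}_{D}$. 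So the transfer function of $\mathbb{L}$ is that of $\mathbb{T}$ multiplied by constant nonsingular matrices on both sides. Combining this with the explicit reduction above settles any extra requirement of this kind.

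I expect essentially no computational obstacle. The only care needed is in matching the precise definition of Rosenbrock linearization, handled as described above.
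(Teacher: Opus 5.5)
Your proof is correct and follows the paper's argument. The paper also composes the block-diagonal unimodular reduction $\mathbb{U}(\lambda)\mathbb{T}(\lambda)\mathbb{V}(\lambda)=\mathrm{diag}(I_{(m-1)n},\mathcal{S}(\lambda),I_{(k-1)r})$ from Theorem~\ref{fiedlerproof} with the constant factors $\mathrm{diag}(\mathcal{X}_A^{-1},\mathcal{X}_D^{-1})$ on the left and $\mathrm{diag}(\mathcal{Y}_A^{-1},\mathcal{Y}_D^{-1})$ on the right. Your ordering $\mathrm{diag}(\mathcal{Y}_A^{-1},\mathcal{Y}_D^{-1})\,\mathbb{V}(\lambda)$ is the correct one, whereas the paper writes $V_A(\lambda)\mathcal{Y}_A^{-1}$, and the Schur-complement digression is not needed for the criterion the paper actually uses.
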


\begin{proof}
    Since $\mathbb{T}(\lambda)$ is as defined in Theorem~\ref{fiedlerproof}, there exist unimodular matrix polynomials $U_{A}(\lambda)$, $U_{D}(\lambda)$, $V_{A}(\lambda)$, and $V_{D}(\lambda)$ such that,
    \begin{align*}
        \mathrm{diag}\left(I_{(m-1)n},\mathcal{S}(\lambda),I_{(k-1)r}\right)&\\
        &\hspace{-2cm}=\mathrm{diag}\left(U_{A}(\lambda),U_{D}(\lambda)\right)\mathbb{T}(\lambda)\mathrm{diag}\left(V_{A}(\lambda),V_{D}(\lambda)\right)\\
        &\hspace{-2cm}=\mathrm{diag}\left(U_{A}(\lambda)\mathcal{X}_{A}^{-1},U_{D}(\lambda)\mathcal{X}_{D}^{-1}\right)\mathbb{L}(\lambda)\mathrm{diag}\left(V_{A}(\lambda)\mathcal{Y}_{A}^{-1},V_{D}(\lambda)\mathcal{Y}_{D}^{-1}\right)
    \end{align*}
  Since $\mathcal{X}_{A}$, $\mathcal{X}_{D}$, $\mathcal{Y}_{A}$, and $\mathcal{Y}_{D}$ are nonsingular, the matrices $\mathrm{diag}\left(U_{A}(\lambda)\mathcal{X}_{A}^{-1},U_{D}(\lambda)\mathcal{X}_{D}^{-1}\right)$ and $\mathrm{diag}\left(V_{A}(\lambda)\mathcal{Y}_{A}^{-1},V_{D}(\lambda)\mathcal{Y}_{D}^{-1}\right)$ and the proof follows.
\end{proof}

In the following lemmas, we recall some important characteristics of GFPRs of a polynomial matrix.

\begin{lemma}[\cite{DA2019Automatic}]\label{1221blocks}
  Let $L_{A}(\lambda)=M_{(\mathbold{\tau}_{1},\mathbold{\sigma}_{1})}(Y^{A}_{1},X^{A}_{1})\left(\lambda M^{A}_{\mathbold{\tau}}-M^{A}_{\mathbold{\sigma}}\right)M_{(\mathbold{\sigma}_{2},\mathbold{\tau}_{2})}(X^{A}_{2},Y^{A}_{2})$ be the GFPR of $A(\lambda)$. If $\mathbold{\sigma}=(\theta_{1},0,\theta_{2})$, then $M^{A}_{\theta_{1}}(e_{m}\otimes I_{n})=e_{m-i_{0}(\mathbold{\sigma})}\otimes I_{n}$ and $(e^{T}_{m}\otimes I_{n})M^{A}_{\theta_{2}}=e^{T}_{m-c_{0}(\mathbold{\sigma})}\otimes I_{n}$.   
\end{lemma}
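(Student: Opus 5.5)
The plan is to prove both identities by induction on the length of the subtuples $\mathbold{\theta}_{1}$ and $\mathbold{\theta}_{2}$. The argument rests only on how a single Fiedler factor $M^{A}_{j}$, $0\le j\le m-1$, acts on block unit vectors. Since $\mathbold{\sigma}$ is a permutation of $\{0,1,\ldots,h\}$, both $\mathbold{\theta}_{1}$ and $\mathbold{\theta}_{2}$ have distinct entries from $\{1,\ldots,h\}\subseteq\{1,\ldots,m-1\}$. Hence $M^{A}_{0}$ never occurs in them, and every factor has the form $M_{j}(-A_{j})$ with $j\ge 1$.

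\textbf{Step 1 (action of one factor).} Read off the definition of $M_{j}(P)$ directly. Its nontrivial $2\times 2$ block sits in block rows and columns $m-j$ and $m-j+1$, with $P$ at position $(m-j,m-j)$, $I_{n}$ at positions $(m-j,m-j+1)$ and $(m-j+1,m-j)$, and $0$ at position $(m-j+1,m-j+1)$. Consequently, for every block index $s$:
\[
M^{A}_{j}(e_{s}\otimes I_{n})=\begin{cases} e_{s}\otimes I_{n}, & s\notin\{m-j,m-j+1\},\\ e_{m-j}\otimes I_{n}, & s=m-j+1,\end{cases}
\]
and symmetrically $(e^{T}_{s}\otimes I_{n})M^{A}_{j}=e^{T}_{s}\otimes I_{n}$ for $s\notin\{m-j,m-j+1\}$, while $(e^{T}_{m-j+1}\otimes I_{n})M^{A}_{j}=e^{T}_{m-j}\otimes I_{n}$. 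The essential point is that whenever the block unit vector lands at position $m-j+1$, the zero diagonal block ensures that no coefficient $A_{j}$ enters the result.

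\textbf{Step 2 (first identity).} Write $\mathbold{\theta}_{1}=(t_{1},\ldots,t_{q})$ and apply the factors to $e_{m}\otimes I_{n}$ from the right, starting with $M^{A}_{t_{q}}$ and ending with $M^{A}_{t_{1}}$. Maintain the invariant that the current vector is $e_{m-s}\otimes I_{n}$, where $s$ is the length of the longest chain $(s,s-1,\ldots,1)$ formed by the factors processed so far, in order from right to left.

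When factor $j$ is processed, there are two possibilities. If $j=s+1$, the vector sits at position $m-j+1$ and moves to $e_{m-j}\otimes I_{n}$, so $s$ increases by one. Otherwise, because the indices are distinct, $j\ne s$ whenever $s\ge 1$, so $m-s\notin\{m-j,m-j+1\}$ and the vector is unchanged.

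At the end, $s$ is the maximal length of a chain $s,s-1,\ldots,1$ appearing after the $0$, read right to left, which is exactly a subtuple $(s,\ldots,1,0)$ of $\mathbold{\sigma}$. Identifying this $s$ with $i_{0}(\mathbold{\sigma})$ requires one observation: a decreasing chain $(s,\ldots,1)$ inside $\mathbold{\theta}_{1}$ can serve as part of a subtuple $(s,\ldots,1,0)$ only if $1$ precedes $0$, so the chain lies in $\mathbold{\theta}_{1}$. Because $\mathbold{\theta}_{1}$ is injective, the greedy scan described above finds the longest such chain. This step is the main obstacle. To handle it, I would carry out the greedy matching carefully and show that it is equivalent to the subtuple definition of $i_{0}$ given in Definition~\ref{coninvoftuple_RCh4}.

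\textbf{Step 3 (second identity).} Apply the transposed rule from Step 1 to $(e^{T}_{m}\otimes I_{n})M^{A}_{\theta_{2}}$, processing the factors of $\mathbold{\theta}_{2}$ from left to right. By the same induction, the final position is $m-s$, where $s$ is the longest increasing chain $(1,2,\ldots,s)$ occurring after $0$, and this $s$ equals $c_{0}(\mathbold{\sigma})$. The degenerate cases need a brief check. If $\mathbold{\theta}_{1}$ is empty, or contains no $1$, then $i_{0}(\mathbold{\sigma})=0$ and the vector stays $e_{m}\otimes I_{n}$, which agrees with the formula; the same holds for $\mathbold{\theta}_{2}$.
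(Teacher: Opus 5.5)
Your proof is correct. The paper itself gives no argument for this lemma: it is imported from \cite{DA2019Automatic} and used as a black box in the proof of Theorem~\ref{gfprproof}. So your block-vector tracking is a self-contained verification, not a reconstruction of something in the text.

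Step~1 reads the structure of $M_j(P)$ correctly. The one dangerous configuration is state $s=j$, where block row or column $m-j$ would pull in $P=-A_j$, and you exclude it correctly. For $s\ge 1$ the value $s$ has already been used and $\theta_1$ has distinct entries. For $s=0$ it would require $j=0\notin\theta_1$.

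The point you flag as the \emph{main obstacle} in Step~2 is in fact already settled by your own observation:
\begin{itemize}
\item Since $\sigma$ is a permutation, $(s,\ldots,1,0)$ is a subtuple of $\sigma$ if and only if $\mathrm{pos}(s)<\cdots<\mathrm{pos}(1)<\mathrm{pos}(0)$, which places $1,\ldots,s$ in $\theta_1$.
\item The right-to-left scan reaches state $s$ if and only if $\mathrm{pos}(s)<\cdots<\mathrm{pos}(1)$. A value $j$ that is passed while the state is not $j-1$ occurs nowhere else, so it can never be used later.
\end{itemize}
Hence the final state is $i_0(\sigma)$, and symmetrically it is $c_0(\sigma)$ in Step~3.

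Your route never uses the specific coefficient $-A_j$, so it holds verbatim for arbitrary assignments $M_j(P)$. Its limit is that it relies on distinctness of indices. That fails for the tuples $\sigma_1,\sigma_2$ in the companion Lemma~\ref{2112blocks}. There the same state-tracking still works, but distinctness must be replaced by the SIP of $(\sigma_1,\sigma,\sigma_2)$: between two occurrences of $j$ there is a $j+1$, which advances the state past $j$ before $j$ recurs.
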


Analogously for $D(\lambda)$, if $L_{D}(\lambda)=N_{(\mathbold{\delta}_{1},\mathbold{\gamma}_{1})}(Y^{D}_{1},X^{D}_{1})\left(\lambda N^{D}_{\mathbold{\delta}}-N^{D}_{\mathbold{\gamma}}\right)N_{(\mathbold{\gamma}_{2},\mathbold{\delta}_{2})}(X^{D}_{2},Y^{D}_{2})$ be the GFPR of $D(\lambda)$ and if $\mathbold{\gamma}=(\pi_{1},0,\pi_{2})$, then $N^{D}_{\pi_{1}}(e_{k}\otimes I_{r})=e_{k-i_{0}(\mathbold{\gamma})}\otimes I_{r}$ and $(e^{T}_{k}\otimes I_{r})N^{D}_{\pi_{2}}=e^{T}_{k-c_{0}(\mathbold{\gamma})}\otimes I_{r}$.

\begin{lemma}[\cite{DA2019Automatic}]\label{2112blocks}
  Let $L_{A}(\lambda)=M_{(\mathbold{\tau}_{1},\mathbold{\sigma}_{1})}(Y^{A}_{1},X^{A}_{1})\left(\lambda M^{A}_{\mathbold{\tau}}-M^{A}_{\mathbold{\sigma}}\right)M_{(\mathbold{\sigma}_{2},\mathbold{\tau}_{2})}(X^{A}_{2},Y^{A}_{2})$ be the GFPR of $A(\lambda)$. Then 
  \begin{align*}
  \left( e^{T}_{m-c_{0}(\mathbold{\sigma})}\otimes I_{n}\right)M_{(\mathbold{\sigma}_{2},\mathbold{\tau}_{2})}(X^{A}_{2},Y^{A}_{2})&=e^{T}_{m-c_{0}(\mathbold{\sigma},\mathbold{\sigma}_{2})}\otimes I_{n} \mbox{ and}\\
  M_{(\mathbold{\tau}_{1},\mathbold{\sigma}_{1})}(Y^{A}_{1},X^{A}_{1})(e_{m-i_{0}(\mathbold{\sigma}}\otimes I_{n})&=e_{m-i_{0}(\mathbold{\sigma}_{1},\mathbold{\sigma})}\otimes I_{n}.
  \end{align*} 
\end{lemma}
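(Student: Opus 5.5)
We prove the first identity; the second follows by the mirror argument, with columns in place of rows and inversions in place of consecutions. We peel off the factors of $M_{(\mathbold{\sigma}_{2},\mathbold{\tau}_{2})}(X^{A}_{2},Y^{A}_{2})$ one at a time. At each step we track which block unit row vector $e^{T}_{j}\otimes I_{n}$ results.

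\textbf{The block-row rules.} For $1\le i\le m-1$, $M_i(P)$ differs from the identity only in block rows/columns $m-i$ and $m-i+1$. Reading off its definition:
\begin{itemize}
\item $(e^{T}_{m-i+1}\otimes I_n)M_i(P)=e^{T}_{m-i}\otimes I_n$;
\item $(e^{T}_{j}\otimes I_n)M_i(P)=e^{T}_{j}\otimes I_n$ for $j\notin\{m-i,m-i+1\}$;
\item the only "bad" row is $j=m-i$, which produces $P$ in block $m-i$ plus $I_n$ in block $m-i+1$.
\end{itemize}
Likewise, $M_0(P)$ fixes every block row except row $m$, which is bad. $M_{-i}(P)$ touches only block rows/columns $m-i$ and $m-i+1$ (for $i=m$, only block $1$).

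\textbf{Step 1: the factors from $\mathbold{\sigma}_2$.} Write $\mathbold{\sigma}_2=(s_1,\dots,s_q)$ and set $c_t=c_0(\mathbold{\sigma},s_1,\dots,s_t)$. Since $0\in\mathbold{\sigma}$, $c_0\ge 0$. By induction on $t$ we show
\[
(e^{T}_{m-c_0(\mathbold{\sigma})}\otimes I_n)M_{s_1}(X_1)\cdots M_{s_t}(X_t)=e^{T}_{m-c_t}\otimes I_n .
\]
The key observation is that $c_t$ is computed greedily: $c_{t}=c_{t-1}+1$ if $s_t=c_{t-1}+1$, and $c_t=c_{t-1}$ otherwise. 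Appending an entry can only extend a longest chain $(0,1,\dots,c)$ through its last element. There are three cases for the factor $M_{s_t}$.
\begin{itemize}
\item If $s_t=c_{t-1}+1$, the current row is $m-c_{t-1}=m-s_t+1$. The first rule sends it to $m-s_t=m-c_t$.
\item If $s_t\notin\{c_{t-1},c_{t-1}+1\}$, the row $m-c_{t-1}$ is untouched and $c_t=c_{t-1}$.
\item If $s_t=c_{t-1}$, we would hit the bad row (this includes $s_t=0$ with $c_{t-1}=0$, where $M_0$ acts on row $m$). We exclude this case using the SIP of $(\mathbold{\sigma}_1,\mathbold{\sigma},\mathbold{\sigma}_2)$.
\end{itemize}

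\textbf{Step 2: the factors from $\mathbold{\tau}_2$.} Entries of $\mathbold{\tau}_2$ are $-i$ with $i\ge h+2$. Each $M_{-i}(Y)$ touches only block rows $\le m-i+1\le m-h-1$. Since all entries of $(\mathbold{\sigma},\mathbold{\sigma}_2)$ lie in $\{0,\dots,h\}$, we have $c_0(\mathbold{\sigma},\mathbold{\sigma}_2)\le h$. Hence the row $m-c_0(\mathbold{\sigma},\mathbold{\sigma}_2)\ge m-h$ is fixed by every factor of $M_{\mathbold{\tau}_2}(Y^{A}_2)$, which finishes the first identity.

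\textbf{Step 3: the second identity.} We multiply $e_{m-i_0(\mathbold{\sigma})}\otimes I_n$ on the left by the factors of $\mathbold{\sigma}_1$ taken from right to left. The column rules are $M_i(P)(e_{m-i+1}\otimes I_n)=e_{m-i}\otimes I_n$, and columns outside $\{m-i,m-i+1\}$ are fixed. Inversions at $0$ of $(\mathbold{\sigma}_1,\mathbold{\sigma})$ grow greedily as we read leftwards. The SIP again excludes hitting the bad column $m-i$, and the $\mathbold{\tau}_1$ factors act on columns $\le m-h-1$.

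\textbf{Main obstacle.} The delicate point is the exclusion argument in Step 1. Suppose $s_t=c:=c_{t-1}$. Let the earlier occurrence of $c$ be the last element of the greedy chain realizing $c_{t-1}$ (for $c=0$, this is the $0$ in $\mathbold{\sigma}$). The SIP forces an occurrence of $c+1$ strictly between the two occurrences of $c$. That occurrence would extend the chain, giving $c_{t-1}\ge c+1$, a contradiction. One must check that the earlier occurrence really lies inside $(\mathbold{\sigma},s_1,\dots,s_{t-1})$, which holds by construction. One must also check that it is the position greedy matching uses; we can take the latest-matchable occurrence.
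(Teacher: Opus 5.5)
Your proof is correct. The paper gives no proof of this lemma at all; it is quoted from \cite{DA2019Automatic}, so there is no in-paper argument to compare against, and yours is a self-contained verification.

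Your argument rests on four points:
\begin{itemize}
\item Appending an entry $x$ changes $c_0(\cdot)$ only when $x=c_0+1$, and then by exactly one.
\item In $M_i(P)$, block row $m-i+1$ maps to $e^{T}_{m-i}\otimes I_n$. Block row $m-i$ is the only row that does not map to a block unit row.
\item The SIP of $(\mathbold{\sigma}_1,\mathbold{\sigma},\mathbold{\sigma}_2)$ rules out $s_t=c_{t-1}$.
\item The supports separate: the $\mathbold{\tau}_2$ factors act only on block rows $\le m-h-1$, while $m-c_0(\mathbold{\sigma},\mathbold{\sigma}_2)\ge m-h$.
\end{itemize}

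Proving it directly makes two things visible that the citation hides. First, no nonsingularity of $X^{A}_{2}$ or $Y^{A}_{2}$ is needed; the lemma is purely combinatorial. Second, the column identity really is a mirror of the row identity, because the SIP is invariant under reversing the tuple.

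One simplification: your closing worry about which occurrence of $c$ the greedy matching uses is unnecessary. The SIP applies to \emph{any} two equal entries. So take any occurrence of $c$ that ends some realization of the chain $(0,1,\dots,c)$ inside $(\mathbold{\sigma},s_1,\dots,s_{t-1})$. Pairing it with $s_t=c$ forces a $c+1$ strictly between them, and that $c+1$ lies in the same prefix. This gives $c_{t-1}\ge c+1$ and the contradiction, with no choice of occurrence required.
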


Similarly, if $L_{D}(\lambda)=N_{(\mathbold{\delta}_{1},\mathbold{\gamma}_{1})}(Y^{D}_{1},X^{D}_{1})\left(\lambda N^{D}_{\mathbold{\delta}}-N^{D}_{\mathbold{\gamma}}\right)N_{(\mathbold{\gamma}_{2},\mathbold{\delta}_{2})}(X^{D}_{2},Y^{D}_{2})$ is the GFPR of $D(\lambda)$ then 
\begin{align*}
\left(e^{T}_{k-c_{0}(\mathbold{\gamma})}\otimes I_{r} \right)N_{(\mathbold{\gamma}_{2},\mathbold{\delta}_{2})}(X^{D}_{2},Y^{D}_{2})=e^{T}_{k-c_{0}(\mathbold{\gamma}_{1},\mathbold{\gamma})}\otimes I_{r}\mbox{ and}\\
N_{(\mathbold{\delta}_{1},\mathbold{\gamma}_{1})}(Y^{D}_{1},X^{D}_{1})\left(e_{k-i_{0}(\mathbold{\gamma})}\otimes I_{r}\right)=e_{k-i_{0}(\mathbold{\gamma}_{1},\mathbold{\gamma})}\otimes I_{r}.
\end{align*}

Now we come to one of our main results showing that for any rational matrix $G(\lambda)$, its GFPR is a linearization.

\begin{theorem}\label{gfprproof}
    Let
    \begin{equation*}
     \mathbb{L}(\lambda)=
        \left[\begin{array}{c|c}
            L_{A}(\lambda) & e_{m-i_{0}(\mathbold{\sigma}_{1},\mathbold{\sigma})}e^{T}_{c_{0}(\mathbold{\gamma},\mathbold{\gamma}_{2})}\otimes -B \\ \hline
            e_{k-i_{0}(\mathbold{\gamma}_{1},\mathbold{\gamma})}e^{T}_{m-c_{0}(\mathbold{\sigma},\mathbold{\sigma}_{2})}\otimes C & L_{D}(\lambda)
        \end{array}\right]
    \end{equation*}
    be the GFPR of $G(\lambda)$ as stated in Definition (\ref{gfprdefinition}), where all the matrix assignments $X^{i}_{j}$ and $Y^{i}_{j}$, $j=1,2$ and $i\in\{A,D\}$ are nonsingular. Then $\mathbb{L}(\lambda)$ is a Rosenbrock linearization of $G(\lambda)$.
\end{theorem}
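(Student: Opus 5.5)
The plan is to reduce $\mathbb{L}(\lambda)$ to a Fiedler-type pencil $\mathbb{T}(\lambda)$ of the form in Theorem~\ref{fiedlerproof}, multiplied on both sides by constant nonsingular block-diagonal matrices, and then to invoke Lemma~\ref{fiedlerdiag}.

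\textbf{Step 1: factor the diagonal blocks.} Write $\mathbold{\sigma}=(\theta_1,0,\theta_2)$ and $\mathbold{\gamma}=(\pi_1,0,\pi_2)$. As in the standard treatment of GFPRs of matrix polynomials, the pencil $\lambda M^{A}_{\mathbold{\tau}}-M^{A}_{\mathbold{\sigma}}$ factors as $M^{A}_{\mathbold{\tau}'}\,T_A(\lambda)\,M^{A}_{\mathbold{\tau}''}$ up to constant nonsingular factors. Here $T_A(\lambda)=\lambda M^{A}_{-m}-M^{A}_{\mathbold{\alpha}}$ is a Fiedler pencil, and $\mathbold{\alpha}$ is a permutation of $\{0,\ldots,m-1\}$ obtained by appending to $\mathbold{\sigma}$ the indices $h+1,\ldots,m-1$. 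This uses $M_{-i}^{A}M_{i}^{A}$-type cancellations, with $M_{-i}(P)$ the inverse of $M_i(-P)$. The factors produced are products of $M^{A}_{j}$ with $j\in\{-m+1,\ldots,-h-1\}$ (and $M^A_{-m}$ when $A_m$ is invertible). They commute past positions $m$ and $m-i_0$ or are absorbed, so
\[
L_A(\lambda)=\mathcal{X}_A\,T_A(\lambda)\,\mathcal{Y}_A ,
\]
where $\mathcal{X}_A,\mathcal{Y}_A$ collect $M_{(\mathbold{\tau}_1,\mathbold{\sigma}_1)}(Y^A_1,X^A_1)$, $M_{(\mathbold{\sigma}_2,\mathbold{\tau}_2)}(X^A_2,Y^A_2)$ and these $\mathbold{\tau}$-factors. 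These matrices are nonsingular because every matrix assignment is nonsingular. Doing the same for $D$ gives $L_D(\lambda)=\mathcal{X}_D T_D(\lambda)\mathcal{Y}_D$ with $T_D(\lambda)=\lambda N^D_{-k}-N^D_{\mathbold{\beta}}$.

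\textbf{Step 2: check the off-diagonal blocks.} We must verify
\[
\mathcal{X}_A(e_{m-i_0(\mathbold{\alpha})}\otimes I_n)=e_{m-i_0(\mathbold{\sigma}_1,\mathbold{\sigma})}\otimes I_n,
\qquad
(e^T_{m-c_0(\mathbold{\alpha})}\otimes I_n)\mathcal{Y}_A=e^T_{m-c_0(\mathbold{\sigma},\mathbold{\sigma}_2)}\otimes I_n,
\]
and the analogous identities for $D$. First note $i_0(\mathbold{\alpha})=i_0(\mathbold{\sigma})$ and $c_0(\mathbold{\alpha})=c_0(\mathbold{\sigma})$, at least when $h<m-1$ or up to the boundary case where the chain reaches $m-1$; this requires a check. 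The $\mathbold{\tau}$-type factors act only on block rows and columns $1,\ldots,m-h$ and fix the relevant unit vectors, so the identities then follow directly from Lemma~\ref{2112blocks} and its $D$-analogue. Consequently the $(1,2)$ and $(2,1)$ blocks of $\mathbb{L}(\lambda)$ equal $\mathcal{X}_A(\cdot)\mathcal{Y}_D$ and $\mathcal{X}_D(\cdot)\mathcal{Y}_A$ applied to the corresponding blocks of $\mathbb{T}(\lambda)$. Hence
\[
\mathbb{L}(\lambda)=\mathrm{diag}(\mathcal{X}_A,\mathcal{X}_D)\,\mathbb{T}(\lambda)\,\mathrm{diag}(\mathcal{Y}_A,\mathcal{Y}_D).
\]
There is one notational point: the column index in the statement, $e^T_{c_0(\mathbold{\gamma},\mathbold{\gamma}_2)}$, should be read as $e^T_{k-c_0(\mathbold{\gamma},\mathbold{\gamma}_2)}$, as in Definition~\ref{gfprdefinition}.

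\textbf{Step 3: conclude.} Apply Lemma~\ref{fiedlerdiag} to obtain that $\mathbb{L}(\lambda)$ is a Rosenbrock linearization.

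\textbf{Main obstacle.} The hard part is Step 1: showing that the $\mathbold{\tau}$-part can be pulled out as constant nonsingular factors that do not disturb the unit vectors attached to $B$ and $C$. Without invertibility of $A_m$, the factor $M^A_{-m}$ cannot simply be inverted. I would first try the standard device from the polynomial GFPR literature: rewrite $\lambda M^A_{\mathbold{\tau}}-M^A_{\mathbold{\sigma}}$ using commutativity, moving the $\mathbold{\tau}$-indices other than $-m$ outward as $M^A_{-j}$ with $j\ge h+1$. These are always nonsingular, and $M^A_{-j}M^A_{j}=I$ is used to convert $M^A_{\mathbold{\sigma}}$ into $M^A_{\mathbold{\alpha}}$. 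I would then verify by block-index bookkeeping that these factors touch only block positions $\le m-h$, whereas positions $m-i_0$ and $m-c_0$ are $\ge m-h$; the boundary case $h=m-1$ must be handled separately.
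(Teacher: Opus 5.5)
Your architecture matches the paper's: factor $L_A(\lambda)=\mathcal{X}_A T_A(\lambda)\mathcal{Y}_A$ and $L_D(\lambda)=\mathcal{X}_D T_D(\lambda)\mathcal{Y}_D$, match the off-diagonal blocks, and conclude with Lemma~\ref{fiedlerdiag}. You are also right that $e^T_{c_0(\mathbold{\gamma},\mathbold{\gamma}_2)}$ must be read as $e^T_{k-c_0(\mathbold{\gamma},\mathbold{\gamma}_2)}$.

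\textbf{The gap is in Step~2.} The claims $i_0(\mathbold{\alpha})=i_0(\mathbold{\sigma})$ and $c_0(\mathbold{\alpha})=c_0(\mathbold{\sigma})$, and the claim that the $\mathbold{\tau}$-factors fix the relevant unit vectors, are false in general. To make Step~1 precise, write $\mathbold{\tau}=(\phi,-m,\psi)$. Since $M^A_{\phi}M^A_{-rev(\phi)}=I=M^A_{-rev(\psi)}M^A_{\psi}$,
\[
\lambda M^A_{\mathbold{\tau}}-M^A_{\mathbold{\sigma}}=M^A_{\phi}\bigl(\lambda M^A_{-m}-M^A_{\mathbold{\alpha}}\bigr)M^A_{\psi},\qquad \mathbold{\alpha}=(-rev(\phi),\mathbold{\sigma},-rev(\psi)).
\]
So the indices coming from $\phi$ are placed \emph{before} $\mathbold{\sigma}$, not appended, and $M^A_{h+1}$ need not commute with $M^A_h$.

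Take $m=3$, $h=1$, $\mathbold{\sigma}=(1,0)$, $\mathbold{\tau}=(-2,-3)$, with all other tuples empty. Then $\mathbold{\alpha}=(2,1,0)$, so $i_0(\mathbold{\alpha})=2\neq 1=i_0(\mathbold{\sigma})$. Moreover $\mathcal{X}_A=M^A_{-2}$ maps $e_1\otimes I_n$ to $e_2\otimes I_n$ instead of fixing it.

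In general, $i_0(\mathbold{\alpha})\neq i_0(\mathbold{\sigma})$ precisely when $-(h+1)\in\phi$ and $i_0(\mathbold{\sigma})=h$. That is exactly the shared block position $m-h$ that your ``$\le m-h$ versus $\ge m-h$'' bookkeeping glosses over. The mirror situation occurs on the right with $\psi$ and $c_0$. The case you flag, $h=m-1$, is in fact the trivial one: there $\phi=\psi=\emptyset$ and $\mathbold{\alpha}=\mathbold{\sigma}$.

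\textbf{The fix.} The identities you want are nonetheless true, but by a different mechanism: the $\mathbold{\tau}$-factor \emph{moves} the unit vector rather than fixing it. Write $\mathbold{\sigma}=(\theta_1,0,\theta_2)$, so that $\mathbold{\alpha}=((-rev(\phi),\theta_1),0,(\theta_2,-rev(\psi)))$. Apply Lemma~\ref{1221blocks} once to $\mathbold{\sigma}$ and once to $\mathbold{\alpha}$:
\[
M^A_{-rev(\phi)}(e_{m-i_0(\mathbold{\sigma})}\otimes I_n)=M^A_{(-rev(\phi),\theta_1)}(e_m\otimes I_n)=e_{m-i_0(\mathbold{\alpha})}\otimes I_n .
\]
Hence $M^A_{\phi}(e_{m-i_0(\mathbold{\alpha})}\otimes I_n)=e_{m-i_0(\mathbold{\sigma})}\otimes I_n$. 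Only after this does Lemma~\ref{2112blocks} carry $e_{m-i_0(\mathbold{\sigma})}\otimes I_n$ to $e_{m-i_0(\mathbold{\sigma}_1,\mathbold{\sigma})}\otimes I_n$.

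The same two-step argument gives the remaining identities: use $\psi$ and $c_0$ on the right, and the splitting $\mathbold{\delta}=(\mu,-k,\omega)$ for $D(\lambda)$. This is exactly how the paper's proof proceeds.
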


\begin{proof}
    Let $\mathbold{\tau}=(\phi,-m,\psi)$ and define $\mathbold{\alpha}=\left(-rev(\phi),\mathbold{\sigma},-rev(\psi)\right)$ and $T_{A}(\lambda)= \lambda M^{A}_{-m}-M^{A}_{\mathbold{\alpha}}$. Next, let $\mathbold{\delta}=(\mu,-k,\omega)$ and define $\mathbold{\beta}=\left(-rev(\mu),\mathbold{\gamma},-rev(\omega)\right)$ and $T_{D}(\lambda)=\lambda N^{D}_{-k}-N^{D}_{\mathbold{\beta}}$. Then $\mathbold{\alpha}$ is a permutation of $\{0,1,\ldots,m-1\}$ and $T_{A}(\lambda)=\lambda M^{A}_{-m}-M^{A}_{\mathbold{\alpha}}$ is a Fiedler pencil of $A(\lambda)$ corresponding to $\mathbold{\alpha}$. Similarly, $\mathbold{\beta}$ is a permutation of $\{0,1,\ldots,k-1\}$ and $T_{D}(\lambda)=\lambda N^{D}_{-k}-N^{D}_{\mathbold{\beta}}$ is a Fiedler pencil of $D(\lambda)$ corresponding to $\mathbold{\beta}$. Thus the pencil $\mathbb{T}(\lambda)$ defined as,
\[
\mathbb{T}(\lambda)
 =\left[\begin{array}{c|c}
      T_{A}(\lambda) & e_{m-c_{0}(\mathbold{\alpha})}e^{T}_{k-i_{0}(\mathbold{\beta})}\otimes -B\\ \hline
      e_{k-c_{0}(\mathbold{\beta})}e^{T}_{m-i_{0}(\mathbold{\alpha})}\otimes C & T_{D}(\lambda)
  \end{array}\right]
\]
is same as the pencil defined in Theorem (\ref{fiedlerproof}). Then, for the GFPR $L_{A}(\lambda)$ of $A(\lambda)$, we have,
\begin{align*}
    L_{A}(\lambda)
         &=M_{(\mathbold{\tau}_{1},\mathbold{\sigma}_{1})}(Y^{A}_{1},X^{A}_{1})\left(\lambda M^{A}_{\mathbold{\tau}}-M^{A}_{\mathbold{\sigma}}\right)M_{(\mathbold{\sigma}_{2},\mathbold{\tau}_{2})}(X^{A}_{2},Y^{A}_{2})\\
         &=M_{(\mathbold{\tau}_{1},\mathbold{\sigma}_{1})}(Y^{A}_{1},X^{A}_{1})\left(\lambda M^{A}_{(\phi,-m,\psi)}-M^{A}_{\phi}M^{A}_{-rev(\phi)}M^{A}_{\mathbold{\sigma}}M^{A}_{-rev(\psi)}M^{A}_{\psi}\right)\\
         &\hspace{9cm} \times M_{(\mathbold{\sigma}_{2},\mathbold{\tau}_{2})}(X^{A}_{2},Y^{A}_{2})\\
         &=M_{(\mathbold{\tau}_{1},\mathbold{\sigma}_{1})}(Y^{A}_{1},X^{A}_{1})\left(\lambda M^{A}_{\phi}M^{A}_{-m}M^{A}_{\psi}-M^{A}_{\phi}M^{A}_{(-rev(\phi),\mathbold{\sigma},-rev(\psi))}M^{A}_{\psi}\right)\\
         &\hspace{9cm}\times M_{(\mathbold{\sigma}_{2},\mathbold{\tau}_{2})}(X^{A}_{2},Y^{A}_{2})\\
         &=M_{(\mathbold{\tau}_{1},\mathbold{\sigma}_{1})}(Y^{A}_{1},X^{A}_{1})M^{A}_{\phi}\left(\lambda M^{A}_{-m}-M^{A}_{(-rev(\phi),\mathbold{\sigma},-rev(\psi))}\right)M^{A}_{\psi}M_{(\mathbold{\sigma}_{2},\mathbold{\tau}_{2})}(X^{A}_{2},Y^{A}_{2})\\
         &=\mathcal{X}_{A} \left(\lambda M^{A}_{-m}-M^{A}_{(-rev(\phi),\mathbold{\sigma},-rev(\psi))}\right)\mathcal{Y}_{A}\\
         &=\mathcal{X}_{A} \left(\lambda M^{A}_{-m}-M^{A}_{\mathbold{\alpha}}\right)\mathcal{Y}_{A}\\
         &=\mathcal{X}_{A} T_{A}(\lambda)\mathcal{Y}_{A},
\end{align*}
where $\mathcal{X}_{A}=M_{(\mathbold{\tau}_{1},\mathbold{\sigma}_{1})}(Y^{A}_{1},X^{A}_{1})M^{A}_{\phi}$, $\mathcal{Y}_{A}=M^{A}_{\psi}M_{(\mathbold{\sigma}_{2},\mathbold{\tau}_{2})}(X^{A}_{2},Y^{A}_{2})$. Since the matrix assignments $X^{A}_{1}, X^{A}_{2}$ and $Y^{A}_{1},Y^{A}_{2}$ are non-singular, the matrices $M_{(\mathbold{\tau}_{1},\mathbold{\sigma}_{1})}(Y^{A}_{1},X^{A}_{1})$ and $M_{(\mathbold{\sigma}_{2},\mathbold{\tau}_{2})}(X^{A}_{2},Y^{A}_{2})$ are nonsingular and consuquently $\mathcal{X}_{A}$ and $\mathcal{Y}_{A}$ are nonsingular.

Following the same line of reasoning for the GFPR $L_{D}(\lambda)$ of $D(\lambda)$, we have, 
\begin{align*}
L_{D}(\lambda)
       =\mathcal{X}_{D} \left(\lambda N^{A}_{-k}-N^{D}_{\mathbold{\beta}}\right)\mathcal{Y}_{D} 
       =\mathcal{X}_{D} T_{D}(\lambda)\mathcal{Y}_{D},
\end{align*}
where $\mathcal{X}_{D}=N_{(\mathbold{\delta}_{1},\mathbold{\gamma_{1}})}(Y^{D}_{1},X^{D}_{1})N^{D}_{\mu}$, $\mathcal{Y}_{D}=N^{D}_{\omega}N_{(\mathbold{\gamma}_{2},\mathbold{\delta}_{2})}(X^{D}_{2},Y^{D}_{2})$ are nonsingular and $\mathbold{\beta}$ is a permutation on $\{0:k-1\}$.

Next, focus on the $(1,2)$ block of $\mathbb{L}(\lambda)$ and using Lemmas (\ref{1221blocks}) and (\ref{2112blocks}) we simplify it as follows.

\noindent
\begin{align*}
    e_{m-i_{0}(\mathbold{\sigma}_{1},\mathbold{\sigma})}&e^{T}_{c_{0}(\mathbold{\gamma},\mathbold{\gamma}_{2})}\otimes -B\\
    &=\left(e_{m-i_{0}(\mathbold{\sigma}_{1},\mathbold{\sigma})}\otimes I_{n}\right)(-B)\left(e^{T}_{k-c_{0}(\mathbold{\gamma},\mathbold{\gamma}_{2})}\otimes I_{k}\right)\\
    &=M_{(\mathbold{\tau}_{1},\mathbold{\sigma}_{1})}(Y^{A}_{1},X^{A}_{1})\left(e_{m-i_{0}(\sigma)}\otimes I_{n}\right) (-B) \left(e^{T}_{k-c_{0}(\gamma)}\otimes I_{r}\right)N_{(\mathbold{\gamma}_{2},\mathbold{\delta}_{2})}(X^{D}_{2},Y^{D}_{2})\\
    &=M_{(\mathbold{\tau}_{1},\mathbold{\sigma}_{1})}(Y^{A}_{1},X^{A}_{1})M^{A}_{\phi}M^{A}_{-rev(\phi)}\left(e_{m-i_{0}(\mathbold{\sigma})}\otimes I_{n}\right) \\
    &\hspace{3cm}\times(-B) \left(e^{T}_{k-c_{0}(\mathbold{\gamma})}\otimes I_{r}\right)N^{D}_{-rev(\omega)}N^{D}_{\omega}N_{(\mathbold{\gamma}_{2},\mathbold{\delta}_{2})}(X^{D}_{2},Y^{D}_{2})\\
    &=\mathcal{X}_{A}M^{A}_{-rev(\phi)}\left(e_{m-i_{0}(h_{1},0,h_{2})}\otimes I_{n}\right) (-B) \left(e^{T}_{k-c_{0}(\ell_{1},0,\ell_{2})}\otimes I_{r}\right)N^{D}_{-rev(\omega)}\mathcal{Y}_{D}\\
    &=\mathcal{X}_{A}M^{A}_{-rev(\phi)}M_{h_{1}}\left(e_{m}\otimes I_{n}\right) (-B) \left(e^{T}_{k}\otimes I_{r}\right)N^{D}_{\ell_{2}}N^{D}_{-rev(\omega)}\mathcal{Y}_{D}\\
    &=\mathcal{X}_{A}M^{A}_{(-rev(\phi),h_{1})}\left(e_{m}\otimes I_{n}\right) (-B) \left(e^{T}_{k}\otimes I_{r}\right)N^{D}_{(\ell_{2},-rev(\omega))}\mathcal{Y}_{D}\\
    &=\mathcal{X}_{A}\left(e_{m-i_{0}(\mathbold{\alpha})}\otimes I_{n}\right) (-B) \left(e^{T}_{k-c_{0}(\mathbold{\beta})}\otimes I_{r}\right)\mathcal{Y}_{D}]\\
    &=\mathcal{X}_{A}\left(e_{m-i_{0}(\mathbold{\alpha})}e^{T}_{k-c_{0}(\mathbold{\beta})}\otimes -B\right)\mathcal{Y}_{D}.
\end{align*}
For the $(2,1)$ block, retracing the steps as above and using Lemmas (\ref{1221blocks}) and (\ref{2112blocks}), we can write it as,
\begin{align*}
    e_{k-i_{0}(\mathbold{\gamma}_{1},\mathbold{\gamma})}e^{T}_{m-c_{0}(\mathbold{\sigma},\mathbold{\sigma}_{2})}\otimes C &= \mathcal{X_{D}}\left(e_{k-c_{0}(\mathbold{\beta})}e^{T}_{m-i_{0}(\mathbold{\alpha})}\otimes C\right)\mathcal{Y}_{A}.
\end{align*}
Therefore, 
\begin{align*}
    \mathbb{L}(\lambda)
    &=\left[\begin{array}{c|c}
            L_{A}(\lambda) & e_{m-i_{0}(\mathbold{\sigma}_{1},\mathbold{\sigma})}e^{T}_{c_{0}(\mathbold{\gamma},\mathbold{\gamma}_{2})}\otimes -B \\ \hline
            e_{k-i_{0}(\mathbold{\gamma}_{1},\mathbold{\gamma})}e^{T}_{m-c_{0}(\mathbold{\sigma},\mathbold{\sigma}_{2})}\otimes C & L_{D}(\lambda)
        \end{array}\right]\\
    &=\left[\begin{array}{c|c}
            \mathcal{X}_{A}T_{A}(\lambda)\mathcal{Y}_{A} & \mathcal{X}_{A}\left(e_{m-i_{0}(\mathbold{\alpha})}e^{T}_{k-c_{0}(\mathbold{\beta})}\otimes -B\right)\mathcal{Y}_{D} \\ \hline
            \mathcal{X_{D}}\left(e_{k-c_{0}(\mathbold{\beta})}e^{T}_{m-i_{0}(\mathbold{\alpha})}\otimes C\right)\mathcal{Y}_{A} & \mathcal{X}_{D}T_{D}(\lambda)\mathcal{Y}_{D}
        \end{array}\right] \\
    &=\left[\begin{array}{c|c}
            \mathcal{X}_{A} &  \\\hline
                  & \mathcal{X_{D}}
      \end{array}\right]
      \left[\begin{array}{c|c}
            T_{A}(\lambda) & \left(e_{m-i_{0}(\mathbold{\alpha})}e^{T}_{k-c_{0}(\mathbold{\beta})}\otimes -B\right) \\ \hline
            \left(e_{k-c_{0}(\mathbold{\beta})}e^{T}_{m-i_{0}(\mathbold{\alpha})}\otimes C\right)& T_{D}(\lambda)
      \end{array}\right] 
      \left[\begin{array}{c|c}
            \mathcal{Y}_{A} &  \\\hline
                  & \mathcal{Y_{D}}
      \end{array}\right]\\
    &=\mathrm{diag}\left(\mathcal{X}_{A},\mathcal{X}_{D}\right)\mathbb{T}(\lambda)\mathrm{diag}\left(\mathcal{Y}_{A},\mathcal{Y}_{D}\right).
\end{align*}
Hence, by Lemma (\ref{fiedlerdiag}), it follows that $\mathbb{L}(\lambda)$ is a Rosenbrock linearization of $G(\lambda)$.
\end{proof}

\section{Structured Lineraizations}
In this section we consider rational matrices with structures and construct linearizations which preserve these structures. Our focus will be on those rational matrices that are symmetric, skew-symmetric, $T$-even and $T$-odd. We show that that the falimy of GFPRs generate a large variety of linearizations of $G(\lambda)$ which preserve structures. Let us recall that our rational matrix has the form,
\begin{equation}\label{tfunction01}
G(\lambda) = C A(\lambda)^{-1} B + D(\lambda) \in \mathbb{C}(\lambda)^{r \times r}. 
\end{equation}
which we consider as a transfer function of an LTI system $\Sigma$ in (\ref{my_lti_chap05}) with the system matrix given by,
\begin{equation}\label{sysmatrix01}
\mathcal{S}(\lambda) = \left[\begin{array}{c|c}
                            A(\lambda) & -B \\ \hline
                            C & D (\lambda) \\
                       \end{array}\right] 
                       \in \mathbb{C}[\lambda]^{(n+r)\times (n+r)} 
\end{equation}
where $A(\lambda) = \sum_{j=0}^{m}\lambda^{j}A_j \in \mathbb{C}[\lambda]^{n \times n}$ is regular with degree $m$ and $ D(\lambda) = \sum_{j=0}^{k}\lambda^{j}D_j \in \mathbb{C}[\lambda]^{r \times r}$ is of degree $k$,  $ C \in \mathbb{C}^{r \times n}, B \in \mathbb{C}^{n \times r}$.

\subsection{Symmetric GFPRs}
A rational matrix $G(\lambda)$ is symmetric if $G(\lambda)^{T}=G(\lambda)$. A rational matrix function has a symmetric realization if $G(\lambda)=D(\lambda)+CA(\lambda)^{-1}B$ with $A(\lambda)$ and $D(\lambda)$ as symmetric and $C=B^{T}$. A system matrix $\mathcal{S}(\lambda)=\left[\begin{array}{c|c} A(\lambda) & B \\\hline C & D(\lambda)\end{array}\right]$ is said to be symmetric if $A(\lambda)$ and $D(\lambda)$ are symmetric and $C=B^{T}$. It is easy to check that $G(\lambda)=D(\lambda)+CA(\lambda)^{-1}B$ is a symmetric realization of $G(\lambda)$ if and only if $\mathcal{S}(\lambda)=\left[\begin{array}{c|c} A(\lambda) & B \\\hline C & D(\lambda)\end{array}\right]$ is a symmetric system matrix of $G(\lambda)$. In this section, we have constructed symmetric linarization of $G(\lambda)$ having a symmetric realization.

The block transpose of a $p\times q$ block matrix $\mathcal{M}=[M_{ij}]$ is a $q\times p$ block matrix $\mathcal{M}^{\mathcal{C}}$ where $\mathcal{M}^{\mathcal{B}}=[M_{ji}]$. A block matrix $\mathcal{M}$ is said to be block symmetric provided that $\mathcal{M}^{\mathcal{B}}=\mathcal{M}$, see \cite{TDM2010}. Recall that if $\mathbb{S}$ be a $(mn+rk)\times (mn+rk)$ system matrix given by 
\[
\mathbb{S} = \left[\begin{array}{c|c}
                   \mathcal{A} & uv^{T}\otimes B \\\hline
                   sz^{T}\otimes C & \mathcal{D} \\
               \end{array}\right],
\]
where $u,z\in\mathbb{C}^{m}$, $v,s\in\mathbb{C}^{k}$, $B\in \mathbb{C}^{n\times r}$, $C\in \mathbb{C}^{r\times n}$, ${\mathcal A} = [{\mathcal A}_{ij}]$ is an $m \times m$ block matrix with ${\mathcal A}_{ij} \in {\mathbb C}^{n \times n}$, and ${\mathcal D}=[\mathcal{D}_{ij}]$ is a $k \times k$ block matrix with $\mathcal{D}_{ij} \in {\mathbb C}^{r \times r}$. The Rosenbrock block transpose of $\mathbb{S}$, denoted by $\mathbb{S}^{\mathbb{B}}$ is defined by
\[\mathbb{S}^{\mathbb{B}} 
      = \left[\begin{array}{c|c}
              \mathcal{A}^{\mathcal{B}} & zs^{T} \otimes B \\\hline
              vu^{T}  \otimes C & \mathcal{D}^{\mathcal{B}} \\
         \end{array}\right],
\]
where $\mathcal{A}^{\mathcal{B}}$ and $\mathcal{D}^{\mathcal{B}}$ denote  the block transpose of $\mathcal{A}$.

Observe that $\mathbb{S}$ is block-symmetric if and only if $\mathcal{A}$ and $\mathcal{B}$ are block-symmetric and $u=z$ and $v=s$. Now we present some important definitions.

\begin{definition}[\cite{BDFR2015}]\label{indtupleandsymmcomp}
 \begin{enumerate}[label=(\alph*)]
     \item  Let $r\in\mathbb{Z}$ and $r\geq 0$. If $\mathbold{\gamma}$ is a permutation of $\{0,1,\ldots,r\}$ and
     \begin{equation}\label{admtuple}
         csf(\mathbold{\gamma})=\left(r-1:r,r-3:r-2,\ldots,q+1,0:q\right)
     \end{equation}
     for some $0\leq q\leq r$ then $ \mathbold{\gamma}$ is said to be an admissible tuple of $\{0,1,\ldots,r\}$. The integer $q$ is called the index of $\mathbold{r}$ and is denoted by $\mathrm{Ind}(\mathbold{r})$.

     \item Let $r\in\mathbb{Z}$ and $r\geq 0$. Consider an admissible tuple $\mathbold{\gamma}$ of $\{0,1,\ldots,r\}$ with index $q$. Then the symmetric complement $\mathbold{c}_{\gamma}$ of $\mathbold{\gamma}$ is defined as
     \[
     \mathbold{ c}_{\gamma}=
     \begin{cases}
         \left(r-1,r-3,\ldots,q+3,q+1,(0:q)_{rev_{c}}\right) \,\,\,\,\ &\text{for $q\geq1$,}\\
        \left(r-1,r-3,\ldots,1\right) & \text{if $q=0$ and $r>0$,}\\
        \phi & \text{if $r=0,$}
     \end{cases}
     \]
     where $(0:q)_{rev_{c}}=\left(0:q-1,0:q-2,\ldots,0:1,0\right)$.
 \end{enumerate}    
\end{definition}

Throughout, we work with admissible tuples as in (\ref{admtuple}). It is known that for every $r\geq0$ there is a unique admissible tuple with index $0$ and $1$\cite{BDFR2015}.

\begin{definition}[\cite{DA2022}]
    A simple admissible tuple is an admissible tuple with index either $0$ or $1$.
\end{definition}

It is to be noted that for $r\in\mathbb{Z}$ with $r\geq0$ and a simple admissible tuple $\mathbold{\gamma}$ of $\{0,1,\ldots,r\}$  $\mathrm{Ind}(\mathbold{\gamma})=0$ when $r$ is even and $\mathrm{Ind}(\mathbold{\gamma})=1$ when $r$ is odd.

\begin{remark}
{\rm
    For an admissible tuple $\mathbold{\gamma}$ of $\{0,1,\ldots,r\}$ with symmetric complement $\mathbold{ c}_{\gamma}$, Definition \ref{indtupleandsymmcomp} implies that $0\in\mathbold{c}_{\gamma}$ whenever $\mathrm{Ind}(\mathbold{\gamma})\geq1$. 
    
    If $\mathbold{\gamma}$ is a simple admissible tuple of $\{0,1,\ldots,r\}$ then $0\in\mathbold{c}_{\gamma}$ when $r$ is odd and $0\notin\mathbold{ c}_{\gamma}$ when $r$ even. }
\end{remark}

\begin{definition}[\cite{BDFR2015}]
    Let $r\in\mathbb{Z}$ and $r\geq0$. If an index tuple $\mathbold{\gamma}$ has the form
    \[\left(s_{1}:r-2,s_{2}:r-4,\ldots,s_{\left\lfloor \frac{r}{2}\right\rfloor}:r-2\left\lfloor\frac{r}{2}\right\rfloor\right)
    \]
    where $s_{j}\geq0$ for $j=1,2,\ldots,\left\lfloor\frac{h}{2}\right\rfloor$, where $\lfloor \cdot\rfloor$ stands for the greatest integer function, then $\mathbold{\gamma}$ is said to be in a canonical form for $r$.
\end{definition}

If $r=0$ or $1$ then the index tuple in canonical form for $r$ is empty.


\begin{theorem}[\cite{BDFR2015}]\label{symmetricpolypencil}
    Let $h\in\{0,1,\ldots, m-1\}$. Consider the simple admissible tuples $\mathbold{ w}_{h}$ and $\mathbold{v}_{h}+m$ of $\{0,1,\ldots,h\}$ and $\{0,1,\ldots,m-h-1\}$, respectively. Consider the index tuples in canonical forms $\mathbold{ t}_{w_{h}}$ and $\mathbold{t}_{v_{h}}+m$ for $h$ and $m-h-1$, respectively. For $\mathbold{t}_{w_{h}}$ and $\mathbold{ t}_{v_{h}}$, let $\mathcal{X}$ and $\mathcal{Y}$ be nonsingular matrix assignments. Then
    \begin{equation}\label{polygfpr}
L(\lambda)=M_{(\mathbold{t}_{v_{h}},\mathbold{ t}_{w_{h}})}\left(\mathcal{Y},\mathcal{X}\right) \left(\lambda M^{A}_{\mathbold{ v}_{h}}-M^{A}_{\mathbold{ w}_{h}}\right) M_{(rev(\mathbold{t}_{ w_{h}}),rev(\mathbold{ t}_{ v_{h}}))}\left(rev(\mathcal{X}),rev(\mathcal{Y})\right),
    \end{equation}
    is a GFPR of $A(\lambda)$ which is block symmetric. Further, the matrix assignments $\mathcal{X}$ and $\mathcal{Y}$ contain only symmetric matrices, then $L(\lambda)$ is symmetric whenever $A(\lambda)$ is symmetric.
\end{theorem}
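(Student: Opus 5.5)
We would prove the statement by checking three things in turn: that (\ref{polygfpr}) is an instance of Definition~\ref{gfprdefinition} for $A(\lambda)$, that it is block symmetric, and that it is symmetric under the stated hypotheses on $A(\lambda)$, $\mathcal{X}$ and $\mathcal{Y}$.

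\emph{Step 1: $L(\lambda)$ is a GFPR.} Take $\mathbold{\sigma}=\mathbold{w}_h$, $\mathbold{\tau}=\mathbold{v}_h$, $\mathbold{\sigma}_1=\mathbold{t}_{w_h}$, $\mathbold{\sigma}_2=rev(\mathbold{t}_{w_h})$, $\mathbold{\tau}_1=\mathbold{t}_{v_h}$ and $\mathbold{\tau}_2=rev(\mathbold{t}_{v_h})$. Several facts then need checking.
\begin{enumerate}[label=(\roman*)]
\item $\mathbold{t}_{w_h}$ has entries in $\{0,\ldots,h-2\}\subseteq\{0,\ldots,h-1\}$.
\item $\mathbold{t}_{v_h}$ has entries in $\{-m,\ldots,-h-3\}$, after the shift by $m$.
\item The tuple $(\mathbold{t}_{w_h},\mathbold{w}_h,rev(\mathbold{t}_{w_h}))$ satisfies the SIP, and the analogous tuple for $\mathbold{v}_h$ does too.
\end{enumerate}
For (iii), write $csf(\mathbold{w}_h)=(h-1:h,h-3:h-2,\ldots)$ with index $0$ or $1$. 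Whenever an index $j$ occurs in the canonical block $s_i:h-2i$ and again in $\mathbold{w}_h$, the index $j+1$ lies between the two occurrences. This holds because the blocks of $\mathbold{w}_h$ containing $j$ and $j+1$ are arranged so that $j+1$ precedes $j$ only inside a pair $j:j+1$ read in the appropriate order. This is the same combinatorial check done in \cite{BDFR2015}, and we would cite or reproduce it.

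\emph{Step 2: block symmetry.} We use the rules $(M_i(P))^{\mathcal{B}}=M_i(P^{T})$ in the block sense; in particular $M_i(P)$ is block symmetric when $P$ is a single block. We also use $(XY)^{\mathcal{B}}=Y^{\mathcal{B}}X^{\mathcal{B}}$ for products of these block-sparse matrices of the same block size. These rules give
\[
L(\lambda)^{\mathcal{B}}=M_{(\mathbold{t}_{v_h},\mathbold{t}_{w_h})}(\mathcal{Y},\mathcal{X})\left(\lambda M^{A}_{rev(\mathbold{v}_h)}-M^{A}_{rev(\mathbold{w}_h)}\right)M_{(rev(\mathbold{t}_{w_h}),rev(\mathbold{t}_{v_h}))}(rev(\mathcal{X}),rev(\mathcal{Y})).
\]
What remains is the identity $M^{A}_{rev(\mathbold{w}_h)}=M^{A}_{\mathbold{w}_h}$, and likewise for $\mathbold{v}_h$. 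It follows from the csf structure $(h-1:h,h-3:h-2,\ldots)$. The pairs $(2j-1,2j)$ commute with one another, since their indices differ by more than $1$. Reversal turns each pair $(a,a+1)$ into $(a+1,a)$, and we must show this gives the same product. Here the admissibility and simplicity, i.e.\ index $0$ or $1$, are what make the reversed tuple equivalent under commutations. We expect this to be the \emph{main obstacle}: the order-of-factors bookkeeping for block transposes of products, especially handling $M_i(P)$ with $P$ not block-symmetric. We would first try to verify it directly on the explicit block form of $M_{\mathbold{w}_h}$ for simple admissible tuples, which is a known block-symmetric Fiedler-type matrix (cf.\ \cite{TDM2010}).

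\emph{Step 3: symmetry.} Block symmetry together with symmetric blocks gives ordinary symmetry. If $A(\lambda)$ is symmetric, every $A_i$ is symmetric, so every $M^A_i$ has symmetric blocks. If the assignments $\mathcal{X},\mathcal{Y}$ consist of symmetric matrices, every factor in (\ref{polygfpr}) is a block-symmetric matrix with symmetric blocks. Hence $L(\lambda)^{T}=L(\lambda)^{\mathcal{B}}$ evaluated blockwise with the $P^{T}=P$ substitutions, which equals $L(\lambda)$ by Step 2.
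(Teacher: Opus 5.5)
\textbf{The proof has a genuine gap, and it is the step you called the main obstacle.} The identity $M^{A}_{rev(\mathbold{w}_h)}=M^{A}_{\mathbold{w}_h}$ is false as soon as $h\geq 1$, so no bookkeeping can close Step 2. Commutations only let you interchange $M_i$ and $M_j$ when $||i|-|j||>1$, so they preserve the relative order of every pair $j,j+1$. Reversal turns each consecution $(j,j+1)$ of $\mathbold{w}_h$ into an inversion. Concretely, take $m=3$, $h=2$, $\mathbold{w}_h=(1,2,0)$, $\mathbold{v}_h=(-3)$ and $\mathbold{t}_{w_h}=\mathbold{t}_{v_h}=\emptyset$. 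Then
\[
\lambda M^{A}_{-3}-M^{A}_{(1,2,0)}=\left[\begin{array}{ccc}\lambda A_3+A_2 & -I_n & 0\\ A_1 & \lambda I_n & A_0\\ -I_n & 0 & \lambda I_n\end{array}\right],
\]
which is not block symmetric. Likewise $m=2$, $h=0$ gives $M^{A}_{(-2,-1)}$ with off-diagonal blocks $A_2$ and $I_n$. Step 1(iii) fails for the same reason: whenever $\mathbold{t}_{w_h}\neq\emptyset$, the tuple $(\mathbold{t}_{w_h},\mathbold{w}_h,rev(\mathbold{t}_{w_h}))$ violates the SIP. For example, $\mathbold{t}_{w_h}=(0)$ gives $(0,1,2,0,0)$. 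So the pencil exactly as displayed is not block symmetric. The paper only quotes this result from \cite{BDFR2015}, and the displayed formula has dropped a factor.

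\textbf{The missing idea is the symmetric complement, which your proof never uses.} The pencil in \cite{BDFR2015}, and the $L_A(\lambda)$ the paper itself writes in Definition~\ref{sysmatrixgfprdefn}, has $M^{A}_{(\mathbold{c}_{w_h},\mathbold{c}_{v_h})}$ inserted between $\lambda M^{A}_{\mathbold{v}_h}-M^{A}_{\mathbold{w}_h}$ and $M_{(rev(\mathbold{t}_{w_h}),rev(\mathbold{t}_{v_h}))}(rev(\mathcal{X}),rev(\mathcal{Y}))$.

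\emph{Step 1} should then take $\mathbold{\sigma}_2=(\mathbold{c}_{w_h},rev(\mathbold{t}_{w_h}))$ and $\mathbold{\tau}_2=(\mathbold{c}_{v_h},rev(\mathbold{t}_{v_h}))$. The complement supplies exactly the successors the SIP needs, e.g.\ $(0,1,2,0,1,0)$.

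\emph{Step 2} then rests on one key lemma: for an admissible tuple, $(\mathbold{w},\mathbold{c}_w)$ is equivalent under commutations to its reverse, e.g.\ $(1,2,0,1)\sim(1,0,2,1)$. Hence $M^{A}_{(\mathbold{w},\mathbold{c}_w)}$ is block symmetric; in the example, $M^{A}_{(1,2,0)}M^{A}_{1}$ has block rows $(-A_2,-A_1,I_n)$, $(-A_1,-A_0,0)$, $(I_n,0,0)$. The entries of $\mathbold{c}_{w_h}$ (and of $\mathbold{c}_{v_h}$) pairwise commute. Because of the index ranges, $\mathbold{c}_{w_h}$ commutes with $\mathbold{v}_h$ and $\mathbold{c}_{v_h}$ commutes with $\mathbold{w}_h$. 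So both coefficients of the pencil become reversal-invariant middles flanked by a factor and its reverse.

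\textbf{Two smaller points.}
\begin{enumerate}
\item Your rule $(XY)^{\mathcal{B}}=Y^{\mathcal{B}}X^{\mathcal{B}}$ fails for general products of these matrices: $M_i(P)M_i(Q)$ has a diagonal block $PQ+I$, while $M_i(Q)M_i(P)$ has $QP+I$. The rule is only safe for operation-free products, i.e.\ SIP tuples.
\item Step 3 does not follow from ``each factor has symmetric blocks'', since a product of such matrices can have nonsymmetric blocks. It is cleaner to use ordinary transposition, which is genuinely anti-multiplicative with $M_i(P)^T=M_i(P^T)$. For symmetric data, $L(\lambda)^T$ is the product of the same factors in reverse order. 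The reversal-equivalence of the complemented tuples then gives $L(\lambda)^T=L(\lambda)$ directly.
\end{enumerate}
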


\begin{definition}\label{sysmatrixgfprdefn}
    Let $h\in\{0,1,\ldots, m-1\}$. Consider the simple admissible tuples $\mathbold{ w}_{h}$ and $\mathbold{v}_{h}+m$ of $\{0,1,\ldots,h\}$ and $\{0,1,\ldots,m-h-1\}$, respectively. Consider the index tuples in canonical forms $\mathbold{ t}_{w_{h}}$ and $\mathbold{t}_{v_{h}}+m$ for $h$ and $m-h-1$, respectively. Let $X_{A}$ and $Y_{A}$ be nonsingular matrix assignments for $\mathbold{ t}_{w_{h}}$ and $\mathbold{ t}_{ v_{h}}$, respectively. Let $\ell\in\{0,1,\ldots, k-1\}$. Consider the simple admissible tuples $\mathbold{ w}_{\ell}$ and $\mathbold{ v}_{\ell}+k$ of $\{0,1,\ldots,\ell\}$ and $\{0,1,\ldots,k-\ell-1\}$, respectively. For the index tuples in canonical forms $\mathbold{ t}_{w_{\ell}}$ and $\mathbold{ t}_{v_{\ell}}+k$ $\ell$ and $k-\ell-1$, respectively, let $X_{D}$ and $Y_{D}$ be nonsingular matrix assignments. We define a pencil $\mathbb{L}(\lambda)$ of $\mathcal{S}(\lambda)$ as
    \begin{equation}\label{sysmatrixgfpr}
    \mathbb{L}(\lambda)
          =\left[\begin{array}{c|c}
             L_{A}(\lambda) & e_{m-i_{0}(\mathbold{ t}_{w_{h}},\mathbold{ w}_{h})}e^{T}_{k-c_{0}(\mathbold{w}_{\ell},\mathbold{ c}_{w_{\ell}},rev(\mathbold{ t}_{ w_{\ell}}))}\otimes B \\ \hline
             e_{k-i_{0}(\mathbold{ t}_{ w_{\ell}},\mathbold{ w}_{\ell})}e^{T}_{m-c_{0}(\mathbold{ w}_{h},\mathbold{ c}_{ w_{h}},rev(\mathbold{ t}_{w_{h}}))}\otimes C & L_{D}(\lambda)
           \end{array}\right],
    \end{equation}
where 
\begin{align*}
L_{A}(\lambda)&=M_{(\mathbold{ t}_{ v_{h}},\mathbold{ t}_{w_{h}})}\left(Y_{A},X_{A}\right) \left(\lambda M^{A}_{\mathbold{ v}_{h}}-M^{A}_{\mathbold{ w}_{h}}\right)M^{A}_{(\mathbold{ c}_{ w_{h}},\mathbold{ c}_{v_{h}})} M_{(rev(\mathbold{ t}_{ w_{h}}),rev(\mathbold{ t}_{v_{h}}))}\left(rev(X_{A}),rev(Y_{A})\right)\\  L_{D}(\lambda)&=N_{(\mathbold{ t}_{v_{\ell}},\mathbold{ t}_{w_{\ell}})}\left(Y_{D},X_{D}\right) \left(\lambda N^{D}_{\mathbold{ v}_{\ell}}-N^{D}_{\mathbold{ w}_{\ell}}\right)N^{D}_{(\mathbold{ c}_{ w_{\ell}},\mathbold{ c}_{v_{\ell}})}N_{(rev(\mathbold{ t}_{ w_{\ell}}),rev(\mathbold{ t}_{v_{\ell}}))}\left(rev(X_{D}),rev(Y_{D})\right)
\end{align*}
are block symmetric GFPRs of $A(\lambda)$ and $D(\lambda)$ respectively.
\end{definition}

\begin{theorem}
    Let $\mathcal{S}(\lambda)$ be as in (\ref{sysmatrix01}). Let $h\in\{0,1,\ldots, m-1\}$ and $\ell\in\{0,1,\ldots k-1\}$ be even. Then $\mathbb{L}(\lambda)$ given in Definition (\ref{sysmatrixgfprdefn}) by Equation (\ref{sysmatrixgfpr}) is a block symmetric GFPR of $\mathcal{S}(\lambda)$.
    
    Further, if $m$ and $k$ both are odd then $\mathbb{L}(\lambda)$ is a Rosenbrock linearization of $\mathcal{S}(\lambda)$ and if $m$ (or $k$ or both) is (are) even then $\mathbb{L}(\lambda)$ is a Rosenbrock linearization of $\mathcal{S}(\lambda)$ if the leading coefficient(s) of $A(\lambda)$ (or $D(\lambda)$ or both) is (are) nonsingular. 
\end{theorem}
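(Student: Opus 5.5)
The plan has two parts: first show that $\mathbb{L}(\lambda)$ is a block symmetric GFPR of $\mathcal{S}(\lambda)$, then deduce the linearization property from Theorem~\ref{gfprproof}.

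\textbf{Step 1: $\mathbb{L}(\lambda)$ is a GFPR.} I will match Definition~\ref{sysmatrixgfprdefn} with Definition~\ref{gfprdefinition}. For $A(\lambda)$ take
\[
\mathbold{\sigma}=\mathbold{w}_h,\quad \mathbold{\tau}=\mathbold{v}_h,\quad \mathbold{\sigma}_1=\mathbold{t}_{w_h},\quad \mathbold{\sigma}_2=(\mathbold{c}_{w_h},rev(\mathbold{t}_{w_h})),\quad \mathbold{\tau}_1=\mathbold{t}_{v_h},\quad \mathbold{\tau}_2=(\mathbold{c}_{v_h},rev(\mathbold{t}_{v_h})).
\]
Do the same for $D(\lambda)$ with $\ell$. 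The factor $M^A_{(\mathbold{c}_{w_h},\mathbold{c}_{v_h})}$ is split using commutativity: indices of $\mathbold{c}_{v_h}$ are $\le -h-2$ in absolute value terms and those of $\mathbold{c}_{w_h}$ are $\le h-1$, so $\mathbold{c}_{v_h}$ commutes past $rev(\mathbold{t}_{w_h})$. The SIP for $(\mathbold{t}_{w_h},\mathbold{w}_h,\mathbold{c}_{w_h},rev(\mathbold{t}_{w_h}))$ I take from \cite{BDFR2015}, as it underlies Theorem~\ref{symmetricpolypencil}.

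\textbf{Step 2: block symmetry.} By Theorem~\ref{symmetricpolypencil} (with the $\mathbold{c}$-factors, as in \cite{BDFR2015}), $L_A$ and $L_D$ are block symmetric. By the observation after the Rosenbrock block transpose, it then remains to prove two index identities:
\[
m-i_0(\mathbold{t}_{w_h},\mathbold{w}_h)=m-c_0(\mathbold{w}_h,\mathbold{c}_{w_h},rev(\mathbold{t}_{w_h}))
\]
and the analogous identity for $\ell$. Here evenness of $h$ and $\ell$ is used. A simple admissible tuple with even index set has index $0$, so $\mathbold{w}_h$ has csf $(h-1:h,\ldots,1:2,0)$ and $0\notin \mathbold{c}_{w_h}$. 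Thus $\mathbold{w}_h$ has an inversion at $0$, and $c_0(\mathbold{w}_h)=0$.

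\textbf{Main obstacle: computing $i_0$ and $c_0$ with the canonical-form tuples attached.} I would compute $i_0(\mathbold{t}_{w_h},\mathbold{w}_h)$ explicitly. The tuple $\mathbold{t}_{w_h}=(s_1:h-2,\ldots,s_{h/2}:0)$ ends at $0$, and $\mathbold{w}_h$ contains the inversion chain $(h,\ldots,1,0)$ as a subtuple. So I expect $i_0$ to be governed by the last occurrence of $0$, which equals $h$ minus the extension contributed by $\mathbold{t}$.

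Symmetrically, $rev(\mathbold{t}_{w_h})$ begins with $0$ followed by ascending runs. Since $(\mathbold{t}_{w_h})^{rev}$ mirrors $\mathbold{t}_{w_h}$, the consecution count $c_0$ after the first $0$ should reproduce the same number. The key fact to verify is the mirror property $c_0(\mathbold{\pi},rev(\mathbold{t}))=i_0(\mathbold{t},rev(\mathbold{\pi}))$, together with $rev(\mathbold{w}_h,\mathbold{c}_{w_h})$ being compatible with $\mathbold{w}_h$ in the relevant entries. The cleanest route I would try first is to bypass the combinatorics altogether. Apply Lemmas~\ref{1221blocks} and~\ref{2112blocks} to write $e_{m-i_0(\cdot)}\otimes I_n$ as $M_{(\mathbold{t}_{v_h},\mathbold{t}_{w_h})}M^A_{\theta_1}(e_m\otimes I_n)$, and the row vector as $(e_m^T\otimes I_n)M^A_{\theta_2}M^A_{\mathbold{c}}M_{rev(\cdot)}$. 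Then use the fact that each elementary matrix with symmetric argument is symmetric, and that $M_{\mathbold{p}}^{\mathcal{B}}=M_{rev(\mathbold{p})}$. This makes one vector the block transpose of the other, giving equality of the indices.

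\textbf{Step 3: linearization.} Theorem~\ref{gfprproof} requires nonsingular assignments. $X_A$, $Y_A$, $X_D$, $Y_D$ are nonsingular by hypothesis. The trivial assignments in $M^A_{(\mathbold{c}_{w_h},\mathbold{c}_{v_h})}$ are nonsingular unless they involve position $-m$ (carrying $A_m$) or $0$. Position $0$ is excluded because $h$ is even. Position $-m$ lies in $\mathbold{c}_{v_h}$ exactly when $\mathbold{v}_h+m$, a simple admissible tuple of $\{0,\ldots,m-h-1\}$, has odd top index, i.e.\ when $m-h-1$ is odd, i.e.\ $m$ is even. In that case $A_m$ must be invertible. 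The same holds for $D$ with $k$. Theorem~\ref{gfprproof} then yields the conclusion.
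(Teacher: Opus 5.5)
Your outer structure is the paper's. You reduce block symmetry of $\mathbb{L}(\lambda)$ to the identity $i_{0}(\mathbold{t}_{w_{h}},\mathbold{w}_{h})=c_{0}(\mathbold{w}_{h},\mathbold{c}_{w_{h}},rev(\mathbold{t}_{w_{h}}))$ and its $\ell$-analogue. You then apply Theorem~\ref{gfprproof} with the same choice of $\mathbold{\sigma},\mathbold{\tau},\mathbold{\sigma}_{i},\mathbold{\tau}_{i}$, and use the same parity argument to get $-m\notin\mathbold{c}_{v_{h}}$ for odd $m$. Steps 1 and 3 are fine; Step 1 is more careful than the paper about moving $M^{A}_{\mathbold{c}_{v_{h}}}$ past $M_{rev(\mathbold{t}_{w_{h}})}$.

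\textbf{The gap is in Step 2.} The index identity, which is the whole content of the block-symmetry claim, is never established.

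Your explicit route rests on a false premise. $\mathbold{w}_{h}=(h-1,h,\ldots,3,4,1,2,0)$ does not contain $(h,\ldots,1,0)$ as a subtuple, because each $2j$ occurs after $2j-1$. In fact $i_{0}(\mathbold{w}_{h})=1$ for $h\geq 2$. Also, $\mathbold{t}_{w_{h}}$ need not end in $0$, since its last string may be empty.

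Your ``bypass'' does not close either.
\begin{itemize}
\item Block-transposing the row vector requires $M_{\mathbold{q}}(\mathcal{X})^{\mathcal{B}}=M_{rev(\mathbold{q})}(rev(\mathcal{X}))$ for SIP tuples. Block transposition is not anti-multiplicative in general. Symmetry of the arguments is irrelevant here, since block transposition does not transpose the blocks.
\item Granting that rule, the transposed row is, up to assignments, $M_{\mathbold{t}_{v_{h}}}M^{A}_{rev(\mathbold{c}_{v_{h}})}M_{\mathbold{t}_{w_{h}}}M^{A}_{\mathbold{c}_{w_{h}}}(e_{m}\otimes I_{n})$.
\item The column is $M_{\mathbold{t}_{v_{h}}}M_{\mathbold{t}_{w_{h}}}M^{A}_{\mathbold{\theta}_{1}}(e_{m}\otimes I_{n})$ with $\mathbold{\theta}_{1}=(h-1,h,\ldots,1,2)$.
\end{itemize}
Since $\mathbold{\theta}_{1}\neq rev(\mathbold{c}_{w_{h}})$, transposition alone does not make these agree. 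You additionally need $M^{A}_{\mathbold{\theta}_{1}}(e_{m}\otimes I_{n})=M^{A}_{\mathbold{c}_{w_{h}}}(e_{m}\otimes I_{n})$, i.e.\ $i_{0}(\mathbold{w}_{h})=c_{0}(\mathbold{w}_{h},\mathbold{c}_{w_{h}})$. That is the $\mathbold{t}$-free case of the very identity you are trying to prove.

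\textbf{How to close it.} The case $h=0$ is trivial. For even $h\geq 2$, use the explicit shapes $\mathbold{w}_{h}=(h-1:h,\ldots,1:2,0)$ and $\mathbold{c}_{w_{h}}=(h-1,\ldots,3,1)$, as the paper does.
\begin{itemize}
\item In $(\mathbold{t}_{w_{h}},\mathbold{w}_{h})$, an inversion chain ending at the final $0$ uses the $1$ of the block $(1,2)$. The $2$ of $\mathbold{w}_{h}$ comes after that $1$, so the chain must continue inside $\mathbold{t}_{w_{h}}$. Hence $i_{0}(\mathbold{t}_{w_{h}},\mathbold{w}_{h})=2+i_{2}(\mathbold{t}_{w_{h}})$.
\item In $(\mathbold{w}_{h},\mathbold{c}_{w_{h}},rev(\mathbold{t}_{w_{h}}))$, the first $1$ after the $0$ is the last entry of $\mathbold{c}_{w_{h}}$. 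Hence $c_{0}=2+c_{2}(rev(\mathbold{t}_{w_{h}}))$.
\item Finally $c_{2}(rev(\mathbold{t}))=i_{2}(\mathbold{t})$, with the convention $-1$ when $2$ is absent.
\end{itemize}

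Alternatively, your mirror idea can be completed:
\begin{itemize}
\item $c_{0}(\mathbold{w}_{h},\mathbold{c}_{w_{h}},rev(\mathbold{t}_{w_{h}}))=i_{0}(\mathbold{t}_{w_{h}},rev(\mathbold{w}_{h},\mathbold{c}_{w_{h}}))$.
\item $rev(\mathbold{w}_{h},\mathbold{c}_{w_{h}})$ is obtained from $(\mathbold{w}_{h},\mathbold{c}_{w_{h}})$ by swapping adjacent entries that differ by more than $1$; this is easily checked directly. Such swaps never change $i_{0}$.
\item Since $0\notin\mathbold{c}_{w_{h}}$, $i_{0}(\mathbold{t}_{w_{h}},\mathbold{w}_{h},\mathbold{c}_{w_{h}})=i_{0}(\mathbold{t}_{w_{h}},\mathbold{w}_{h})$.
\end{itemize}
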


\begin{proof}
    By Theorem \ref{symmetricpolypencil}, $L_{A}(\lambda)$ and $L_{D}(\lambda)$ are block symmetric GFPRs of $A(\lambda)$ and $D(\lambda)$ respectively. Hence $\mathbb{L}(\lambda)$ is block symmetric if and only if $i_{0}(\mathbold{ t}_{w_{h}},\mathbold{ w}_{h})=c_{0}(\mathbold{ w}_{h},\mathbold{ c}_{w_{h}},rev(\mathbold{ t}_{w_{h}}))$ and $i_{0}(\mathbold{ t}_{w_{\ell}},\mathbold{ w}_{\ell})=c_{0}(\mathbold{ w}_{\ell},\mathbold{ c}_{w_{\ell}},rev(\mathbold{ t}_{w_{\ell}}))$. 

    \noindent \textbf{Case I:} Suppose $h=0$ and $l=0$. Then $\mathbold{ w}_{h}=(0)$, $c_{w_{h}}=\phi$, and, $rev(\mathbold{ t}_{w_{h}})=\phi$ (since $\mathbold{ t}_{w_{h}}=\phi$) and $\mathbold{ w}_{\ell}=(0)$, $c_{w_{\ell}}=\phi$, and, $rev(\mathbold{ t}_{w_{\ell}})=\phi$ (since $\mathbold{ t}_{w_{\ell}}=\phi$). Therefore, $i_{0}(\mathbold{ t}_{w_{h}},\mathbold{ w}_{h})=c_{0}(\mathbold{ w}_{h},\mathbold{ c}_{w_{h}},rev(\mathbold{ t}_{w_{h}}))=0$ and $i_{0}(\mathbold{ t}_{w_{\ell}},\mathbold{ w}_{\ell})=c_{0}(\mathbold{ w}_{\ell},\mathbold{ c}_{w_{\ell}},rev(\mathbold{ t}_{w_{\ell}}))=0$

    \noindent \textbf{Case II:} Suppose that $h>0$ and $l=0$. Since $h$ is even and $\mathbold{ w}_{h}$ is a simple admissible tuple of $\{0,1,\ldots,h\}$, $\mathbold{ w}_{h}=(h-1:h,h-3:h-2,\ldots,1:2,0)$ and $\mathbold{ c}_{w_{h}}=(h-1,h-3,\ldots,3,1)$. Thus, $c_{0}(\mathbold{ w}_{h},\mathbold{ c}_{w_{h}},rev(\mathbold{ t}_{w_{h}}))=2+c_{2}(rev(\mathbold{ t}_{w_{h}})$ and $i_{0}(\mathbold{ t}_{w_{h}},\mathbold{ w}_{h})=2+i_{2}(\mathbold{ t}_{w_{h}})$. Since for any index tuple $\alpha$ and an index $t$, $c_{t}(rev(\alpha))=i_{t}(\beta)$ we have $i_{0}(\mathbold{ t}_{w_{h}},\mathbold{ w}_{h})=c_{0}(\mathbold{ w}_{h},\mathbold{ c}_{w_{h}},rev(\mathbold{ t}_{w_{h}}))$. Also since $\ell=0$,  $i_{0}(\mathbold{ t}_{w_{\ell}},\mathbold{ w}_{\ell})=c_{0}(\mathbold{ w}_{\ell},\mathbold{ c}_{w_{\ell}},rev(\mathbold{ t}_{w_{\ell}}))=0$ follows from Case I.

    \noindent \textbf{Case III:} Suppose $h>0$ and $\ell>0$. Then the required equalities can be obtained by following the first part of Case II.
    
This shows that $\mathbb{L}(\lambda)$ is block symmetric.

Next since $h$ is even and $\mathbold{ w}_{h}$ is a simple admissible tuple of $\{0,1,\ldots,h\}$, $\mathrm{Ind}\mathbold{ w}_{h}=0$. Then $\mathbold{ c}_{w_{h}}=(h-1,h-3,\ldots,1)$. Therefore, $0\notin\mathbold{ c}_{w_{h}}$ and hence the matrix assignment for $\mathbold{ c}_{w_{h}}$ is nonsingular. Further the matrix assignmnents $X_{A}$ and $Y_{A}$ for $\mathbold{ t}_{w_{h}}$ and $\mathbold{ t}_{v_{h}}$ are nonsingular.

Similarly since $\ell$ is even and $\mathbold{ w}_{\ell}$ is a simple admissible tuple of $\{0,1,\ldots,\ell\}$, $\mathrm{Ind}(\mathbold{ w}_{\ell})=0$. Then $\mathbold{ c}_{w_{\ell}}=(\ell-1,\ell-3,\ldots,1)$. Therefore, $0\notin\mathbold{ c}_{w_{\ell}}$ and hence the matrix assignment for $\mathbold{ c}_{w_{\ell}}$ is nonsingular. Further the matrix assignments $X_{D}$ and $Y_{D}$ for $\mathbold{ t}_{w_{\ell}}$ and $\mathbold{ t}_{v_{\ell}}$ are nonsingular.

 Now take $\mathbold{\sigma}=\mathbold{ w}_{h}$, $\mathbold{\tau}=\mathbold{ v}_{h}$, $\mathbold{\sigma}_{1}=\mathbold{ t}_{w_{h}}$, $\mathbold{\sigma}_{2}=(\mathbold{ c}_{w_{h}},rev(\mathbold{ t}_{w_{h}}))$, $\mathbold{\tau}_{1}=\mathbold{ t}_{v_{h}}$, $\mathbold{\tau}_{2}=(\mathbold{ c}_{v_{h}},rev(\mathbold{ t}_{v_{h}}))$ and $\mathbold{\gamma}=\mathbold{ w}_{\ell}$, $\mathbold{\delta}=\mathbold{ v}_{\ell}$, $\mathbold{\gamma}_{1}=\mathbold{ t}_{w_{\ell}}$, $\mathbold{\gamma}_{2}=(\mathbold{ c}_{w_{\ell}},rev(\mathbold{ t}_{w_{\ell}}))$, $\mathbold{\delta}_{1}=\mathbold{ t}_{v_{\ell}}$, $\mathbold{\delta}_{2}=(\mathbold{ c}_{v_{\ell}},rev(\mathbold{ t}_{v_{\ell}}))$. Then, $\mathbb{L}(\lambda)$ is a Rosenbrock linearization of $\mathcal{S}(\lambda)$ if matrix assignments for  $\mathbold{ c}_{v_{h}}$ and $\mathbold{ c}_{v_{\ell}}$ are nonsingular.

 Suppose $m$ is odd, then $m-h-1$ is even as $h$ is even. So, $0\notin\mathbold{ c}_{v_{h}}+m\implies -m\notin\mathbold{ c}_{v_{h}}$ which shows that the matrix assignment for $\mathbold{ c}_{v_{h}}$ is nonsingular. Similarly, if $k$ is odd, then $k-\ell-1$ is even as $\ell$ is even. So, $0\notin\mathbold{ c}_{v_{\ell}}+k\implies -k\notin\mathbold{ c}_{v_{\ell}}$ which shows that the matrix assignment for $\mathbold{ c}_{v_{\ell}}$ is nonsingular.

On the other hand, if the leading coefficeint of $A(\lambda)$ and $D(\lambda)$ are nonsingular, then the matrix assignment for $\mathbold{ c}_{v_{h}}$ and $\mathbold{ c}_{v_{\ell}}$ are nonsingular irrespective of $m$ and $k$ being odd or even.

Hence $\mathbb{L}(\lambda)$ is a Rosenbrock linearization of $\mathcal{S}(\lambda)$.
\end{proof}

\begin{example}
     Let $G(\lambda)=D(\lambda)+B^{T}A(\lambda)^{-1}B\in\mathbb{C}(\lambda)^{r\times r}$ where $A(\lambda)=\lambda^{3}A_{3}+\lambda^{2}A_{2}+\lambda  A_{1}+A_{0}\in\mathbb{C}[\lambda]^{n\times n}$, $D(\lambda)=\lambda^{5}D_{5}+\lambda^{4}D_{4}+\lambda^{3}D_{3}+\lambda^{2}D_{2}+\lambda  D_{1}+D_{0}\in\mathbb{C}[\lambda]^{r\times r}$, $C\in\mathbb{C}^{r\times n}$, and $B\in\mathbb{C}^{n\times r}$. Let us consider $h=2$, $\mathbold{w}_{h}=(1,2,0)$, $\mathbold{v}_{h}=(-3)$, $\mathbold{c}_{w_{h}}=(1)$, $\mathbold{c}_{v_{h}}=\phi$, $\mathbold{t}_{w_{h}}=(0)$, and, $\mathbold{t}_{v_{h}}=\phi$. Similarly let us consider $\ell=2$, $\mathbold{w}_{\ell}=(1,2,0)$, $\mathbold{v}_{\ell}=(-3)$, $\mathbold{c}_{w_{\ell}}=(1)$, $\mathbold{c}_{v_{\ell}}=\phi$, $\mathbold{t}_{w_{\ell}}=(0)$, and, $\mathbold{t}_{v_{\ell}}=(-5)$. Let $X_{A}$, $Y_{A}$, $X_{D}$ and $Y_{D}$ are arbitrary nonsingular symmetric matrices. Then, GFPR $\mathbb{L}(\lambda)$ of $G(\lambda)$ given by,
     {\small
     \begin{align*}
         \mathbb{L}(\lambda)
           =\left[\begin{array}{ccc|ccccc}
              \lambda A_{3}+ A_{2} & A_{1} & -X_{A} & 0 & 0 & 0 & 0 & 0 \\
              A_{1} & \lambda A_{1}+A_{0} & \lambda X_{A} & 0 & 0 & 0 & B & 0 \\
              -X_{A} & \lambda X_{A} & 0 & 0 & 0 & 0 & 0 & 0 \\\hline
              0 & 0 & 0 & 0 & -Y_{D} & \lambda Y_{D} & 0 & 0 \\
              0 & 0 & 0 & -Y_{D} & \lambda D_{5}-D_{4} & \lambda D_{4} & 0 & 0 \\
              0 & 0 & 0 & \lambda Y_{D} & \lambda D_{4} & \lambda D_{3}+D_{2} & D_{1} & -X_{D}\\
              0 & B^{T} & 0 & 0 & 0 & D_{1} & -\lambda D_{1} + D_{0} & \lambda X_{D} \\
              0 & 0 & 0 & 0 & 0 & -X_{D} & \lambda X_{D} & 0
          \end{array}\right]
     \end{align*}}
     is a symmetric Rosenbrock linearization of $G(\lambda)$.
\end{example}

\subsection{T-even linearizations}
A rational matrix function $G(\lambda)$ is $T-$even if $G(-\lambda)^{T}=G(\lambda).$ A rational matrix function has a $T-$even realization if $G(\lambda)=D(\lambda)+CA(\lambda)^{-1}B$ with $A(\lambda)$ and $D(\lambda)$ $T-$even and $C=B^{T}$. A system matrix $\mathcal{S}(\lambda)=\left[\begin{array}{c|c} A(\lambda) & B \\\hline C & D(\lambda)\end{array}\right]$ is $T-$even if $A(\lambda)$ and $D(\lambda)$ $T-$even and $C=B^{T}$. It is easy to check that $G(\lambda)=D(\lambda)+CA(\lambda)^{-1}B$ is a $T-$even realization of $G(\lambda)$ whenever $\mathcal{S}(\lambda)=\left[\begin{array}{c|c} A(\lambda) & B \\\hline C & D(\lambda)\end{array}\right]$ is a $T-$even. In this section, we have constructed $T-$even linarization of $G(\lambda)$ having a $T-$even realization.

\begin{definition}[\cite{BFii2014}]
    A matrix $Q_{A}\in\mathbb{C}^{mn\times mn}$is said to be quasi-identity matrix if $Q_{A}=\epsilon_{1}I_{n}\oplus\ldots\oplus\epsilon_{m}I_{n}$, where $\epsilon_{i}\in\{\pm1\}$ for $i=1,2,\ldots,m$. We referto $\epsilon_{j}$ as the $j-$th parameter of $Q_{A}$.
\end{definition}

Next we recall the folowing theorem which is a particluar case of \cite[Theorem 4.15]{BFii2014}.

\begin{theorem}\cite{BFii2014}\label{evenoddpolynomialpencil}
    Let $h\in\{0,1,\ldots, m-1\}$ be even . Consider a simple admissible tuple $\mathbold{ w}$ of $\{0,1,\ldots,h\}$ with $\mathbold{ c}_{w}$ as the symmetric complement. Similarly for any admissible tuple $\mathbold{ z}+m$ of $\{0,1,\ldots,m-h-1\}$ with $\mathbold{ c}_{z}+m$ as the symmetric complement. Let $L(\lambda)=\left(\lambda M^{A}_{z}-M^{A}_{w}\right)M^{A}_{c_{w}}M^{A}_{c_{z}}.$ Then, up to multiplication by $-1$, there exists a unique quasi-identity matrix $Q$ such that $QL(\lambda)$ is $T-$even (resp., $T-$odd) when $A(\lambda)$ is $T-$even (resp., $T-$odd).  
\end{theorem}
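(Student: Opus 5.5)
The goal is to show that for $L(\lambda)=\left(\lambda M^{A}_{z}-M^{A}_{w}\right)M^{A}_{c_{w}}M^{A}_{c_{z}}$ there is a quasi-identity $Q$, unique up to sign, such that $QL(\lambda)$ is $T$-even (resp. $T$-odd). I would first reduce this to block-level identities. Write $L(\lambda)=\lambda L_1-L_0$ with $L_1=M^{A}_{z}M^{A}_{c_w}M^{A}_{c_z}$ and $L_0=M^{A}_{w}M^{A}_{c_w}M^{A}_{c_z}$. Then $QL(\lambda)$ is $T$-even exactly when
\[
(QL_1)^{T}=-QL_1 \quad\text{and}\quad (QL_0)^{T}=QL_0 .
\]
For the $T$-odd case the two signs are swapped. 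Since $Q$ is block diagonal with blocks $\epsilon_iI_n$, the $(i,j)$ block of $QL_s$ is $\epsilon_i(L_s)_{ij}$. So the conditions become blockwise:
\[
\epsilon_j\big((L_s)_{ji}\big)^{T}=\pm\,\epsilon_i(L_s)_{ij}.
\]

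The next step is to use Theorem~\ref{symmetricpolypencil} in the special case with no $\mathbold{t}$-tuples. That result says $L_1$ and $L_0$ are block symmetric. Under it, every block $(L_s)_{ij}$ is either $0$, $\pm I_n$, or $\pm A_p$ for a single coefficient $A_p$, and block symmetry gives $(L_s)_{ji}=(L_s)_{ij}$ as blocks. For a $T$-even polynomial we have $A_p^{T}=(-1)^pA_p$; for a $T$-odd one, $A_p^{T}=(-1)^{p+1}A_p$. Hence each nonzero block imposes a sign relation
\[
\epsilon_i\epsilon_j=\eta_{ij}^{(s)}\in\{\pm1\},
\]
where $\eta$ depends only on whether the block is an identity or an $A_p$, on the parity of $p$, and on $s$. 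Diagonal blocks $(i=i)$ impose no constraint on $\epsilon_i$. Instead they require the block to be of the correct type, which must be checked.

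The plan is then to build the graph on $\{1,\ldots,m\}$ with an edge $i\sim j$ whenever some $(L_s)_{ij}$ with $i\ne j$ is nonzero. Existence of $Q$ amounts to consistency of the sign constraints around every cycle. Uniqueness up to $-1$ amounts to connectedness of this graph. Connectedness should follow from the structure of GFPRs: $L(\lambda)$ is strictly equivalent to a Fiedler pencil, so the identity blocks link consecutive block rows. Concretely, I would show by induction on the factors $M_{i}$ that each superdiagonal position $(i,i+1)$, or $(i,i+2)$ when an identity is shifted by the complement, is occupied. For consistency I would give an explicit candidate rather than checking cycles abstractly. The natural guess, following the Fiedler-pencil case, is to take $\epsilon_i$ as an alternating pattern $(-1)^{\text{(position-dependent degree)}}$. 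The idea is that block $(i,j)$ holds $A_p$ where $p$ has parity tied to $i+j$, or it holds $I_n$ sitting at positions with $i+j$ of the parity that forces a sign flip between $L_1$ and $L_0$.

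The main obstacle is verifying this explicit sign pattern. It requires an exact description of the block positions of coefficients and identities in $L_1$ and $L_0$ for simple admissible $\mathbold{w}$ and admissible $\mathbold{z}$, which are the ``staircase'' forms with the complements. I would derive these by multiplying the $M_i$'s in column standard form, processing the pairs $(r-1:r)$ and the complement $r-1$ together. Each such pair contributes a $3\times3$ (or $2\times2$) block pattern, which I would check separately. Finally, I would verify that the diagonal blocks have the correct parity, e.g. a diagonal $A_p$ in $L_0$ must be $T$-symmetric. Because $h$ is even and $\mathbold{w}$ is simple admissible, the diagonal blocks of $L_0$ are even-indexed coefficients or sums like $\lambda A_{p+1}+A_p$ with $p$ even, which fits. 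The corresponding check for $\mathbold{z}$ works similarly.
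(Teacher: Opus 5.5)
The paper does not prove this statement. It is quoted from \cite{BFii2014} (Theorem 4.15 there, with $Q$ produced by Algorithm 4.14), so your proposal has to carry the whole argument, and as written it does not.

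\textbf{What is sound, and what needs a different justification.} Your reduction is correct. $QL(\lambda)$ is $T$-even iff $(QL_1)^T=-QL_1$ and $(QL_0)^T=QL_0$. Since $L_1,L_0$ are block symmetric with blocks $0$, $\pm I_n$ or $\pm A_p$, this becomes a system $\epsilon_i\epsilon_j=\eta_{ij}$ on a signed graph, plus a parity check on diagonal blocks. That check must be done separately for $L_1$ and $L_0$; the sums $\lambda A_{p+1}+A_p$ you mention are blocks of $L(\lambda)$, not of $L_0$. Two supporting facts need a different justification:
\begin{itemize}
\item The $0/\pm I_n/\pm A_p$ shape of the blocks comes from the SIP (operation-free) property, not from block symmetry.
\item Theorem~\ref{symmetricpolypencil} covers only simple admissible tuples, while here $z+m$ is arbitrary. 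What works is $L_1=M^A_{c_w}M^A_{(z,c_z)}$ and $L_0=M^A_{(w,c_w)}M^A_{c_z}$. In each product the factors act on complementary block ranges: $\{m-h+1,\dots,m\}$ and $\{1,\dots,m-h\}$ for $L_1$, and $\{m-h,\dots,m\}$ and $\{1,\dots,m-h-1\}$ for $L_0$. Combine this with block symmetry of $M_{(q,c_q)}$ for every admissible $q$ \cite{BDFR2015}, and of $M^A_{c_w}$, $M^A_{c_z}$.
\end{itemize}

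\textbf{First gap: existence.} Existence is exactly the consistency of these constraints around every cycle, and you leave it to an unspecified \emph{alternating pattern}. No uniform guess of that kind can work. An off-diagonal $\pm A_p$ block imposes the same relation in both cases: $\epsilon_i\epsilon_j=(-1)^{p+1}$ if it lies in $L_1$, and $(-1)^p$ if it lies in $L_0$. A $\pm I_n$ block, by contrast, imposes opposite relations in the $T$-even and $T$-odd cases. Take the paper's own example: $m=5$, $h=2$, $w=(1,2,0)$, $z=(-4,-3,-5)$. The identity blocks alone force $Q=\pm\mathrm{diag}(1,1,-1,1,-1)\otimes I_n$ in the $T$-even case and $Q=\pm\mathrm{diag}(1,-1,1,-1,-1)\otimes I_n$ in the $T$-odd case. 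Neither is alternating, and they do not differ by a global sign. So you need an explicit description of the positions and types of the nonzero blocks of $M_{(w,c_w)}$, $M_{c_w}$, $M_{(z,c_z)}$ and $M_{c_z}$ for an arbitrary admissible $z$. You then need to verify that every cycle has sign product $+1$; that verification is the real content of the theorem. It helps to note that, by the block ranges above, block $m-h$ is a cut vertex separating a $w$-part from a $z$-part, so the cycle check can be done on each part separately.

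\textbf{Second gap: uniqueness.} A relation coming from an $A_p$ block disappears when $A_p=0$, and the claim must hold for every structured $A(\lambda)$ of degree $m$. So connectivity has to be proved using only $\pm I_n$ blocks and $\pm A_m$ blocks ($A_m\neq0$ since $\deg A=m$). Identity blocks alone do not suffice, and your superdiagonal heuristic can land on coefficients that may vanish. For $m=3$, $h=0$, $w=(0)$, $z+3=(0,1,2)$, $c_z+3=(0,1,0)$ one gets
\[
L(\lambda)=\begin{bmatrix} 0 & -A_3 & \lambda A_3\\ -A_3 & \lambda A_3-A_2 & \lambda A_2\\ \lambda A_3 & \lambda A_2 & \lambda A_1+A_0\end{bmatrix}.
\]
This pencil has no identity block at all, and position $(2,3)$ carries only $A_2$. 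Uniqueness here comes from the $A_3$ blocks at $(1,2)$ in $L_0$ and $(1,3)$ in $L_1$, which force $Q=\pm\mathrm{diag}(1,-1,1)\otimes I_n$.

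Your appeal to strict equivalence with a Fiedler pencil does not rescue this, for two reasons:
\begin{itemize}
\item The reduction in Theorem~\ref{gfprproof} needs nonsingular matrix assignments. Here $-m\in c_z$, so $A_m$ would have to be invertible, which the statement does not assume.
\item Strict equivalence says nothing about where the identity blocks of $L(\lambda)$ itself are located.
\end{itemize}
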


More information on the construction of the quasi identity matrix $Q$ can be found in \cite[Algorithm 4.14]{BFii2014}. In the next theorem, we construct a $T-$even linearization of $G(\lambda)$.

\begin{theorem}\label{tevenproof}
    Let $G(\lambda)$ be a rational matrix with $T-$ even realization and $\mathcal{S}(\lambda)$ be the corresponding system matrix. Let $h\in\{0,1,\ldots, m-1\}$ is even, $\mathbold{ w}$ be the simple admissible tuple of $\{0,1,\ldots,h\}$ and, $\mathbold{ c}_{w}$ be the symmetric complement of $\mathbold{ w}$. Let $\mathbold{ z}_{h}+m$ be the admissible tuple of $\{0,1,\ldots,m-h-1\}$ and $\mathbold{ c}_{z_{h}}+m$ be the symmetric complement of $\mathbold{ z}_{h}+m$. Similarly, let $\ell\in\{0,1,\ldots, k-1\}$ is even, $\mathbold{ v}$ is the simple admissible tuple of $\{0,1,\ldots,\ell\}$, and, $\mathbold{ c}_{v}$ be the symmetric complement of $\mathbold{ v}$. Let $\mathbold{ z}_{\ell}+k$ be the admissible tuple of $\{0,1,\ldots,k-\ell-1\}$ and $\mathbold{ c}_{z_{\ell}}+k$ be the symmetric complement of $\mathbold{ z}_{\ell}+k$. Then there exist unique quasi identity matrices $Q_{A}$ and $Q_{D}$ such that,
    \[
    \mathbb{L}(\lambda)=\left[\begin{array}{c|c}
                               Q_{A}L_{A}(\lambda) & e_{m-i_{0}(\mathbold{ w})}e^{T}_{k-c_{0}(\mathbold{ v},\mathbold{ c}_{v})}\otimes B \\\hline
                               e_{k-i_{0}(\mathbold{ v})}e^{T}_{m-c_{0}(\mathbold{ w},\mathbold{ c}_{w})}\otimes B^{T} & Q_{D}L_{D}(\lambda)
                        \end{array}\right]
    \]
    is $T-$even, where $L_{A}(\lambda)=\left(\lambda M^{A}_{\mathbold{ z}_{h}}-M^{A}_{\mathbold{ w}}\right)M^{A}_{\mathbold{ c}_{w}}M^{A}_{\mathbold{ c}_{z_{h}}}$ and $L_{D}(\lambda)=\left(\lambda M^{D}_{\mathbold{ z}_{\ell}}-M^{D}_{\mathbold{ v}}\right)M^{D}_{\mathbold{ c}_{v}}M^{D}_{\mathbold{ c}_{z_{\ell}}}$ are the pencils as defined in Theorem \ref{evenoddpolynomialpencil} for the matrix polynomials $A(\lambda)$ and $D(\lambda)$ respectively. 
       When the leading coefficient of $A(\lambda)$ is singular, assume $\mathrm{Ind}(\mathbold{ z}_{h}+m)=0$ and when the leading coeffiecint of $D(\lambda)$ is singular, assume $\mathrm{Ind}(\mathbold{ z}_{\ell}+k)=0$. Then $\mathbb{L}(\lambda)$ is a Rosenbrock linearization of $G(\lambda)$.
\end{theorem}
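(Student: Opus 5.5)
The proof has two parts: first the $T$-even structure, then the linearization property.

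\emph{$T$-even structure.} Take $Q_A$ and $Q_D$ to be the quasi-identity matrices supplied by Theorem~\ref{evenoddpolynomialpencil} for $A(\lambda)$ and $D(\lambda)$. Then $Q_AL_A(\lambda)$ and $Q_DL_D(\lambda)$ are $T$-even, since $A(\lambda)$ and $D(\lambda)$ are $T$-even by hypothesis. Writing $\mathbb{L}(-\lambda)^T$ blockwise, the diagonal blocks are $(Q_AL_A(-\lambda))^T=Q_AL_A(\lambda)$ and $(Q_DL_D(-\lambda))^T=Q_DL_D(\lambda)$. The off-diagonal blocks are constant and are swapped and transposed:
\[
\left(e_{k-i_0(\mathbold{v})}e^T_{m-c_0(\mathbold{w},\mathbold{c}_w)}\otimes B^T\right)^T=e_{m-c_0(\mathbold{w},\mathbold{c}_w)}e^T_{k-i_0(\mathbold{v})}\otimes B .
\]
So $T$-evenness of $\mathbb{L}$ reduces to the two index identities
\[
i_0(\mathbold{w})=c_0(\mathbold{w},\mathbold{c}_w),\qquad i_0(\mathbold{v})=c_0(\mathbold{v},\mathbold{c}_v).
\]
Since $h$ is even, $\mathbold{w}$ is the simple admissible tuple of index $0$, namely $\mathbold{w}=(h-1:h,\ldots,1:2,0)$, with $\mathbold{c}_w=(h-1,h-3,\ldots,1)$. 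Reading subtuples, $i_0(\mathbold{w})=h$ via the inversion chain $(h,h-1,\ldots,0)$ taken from the left parts of $\mathbold{w}$. The tuple $(\mathbold{w},\mathbold{c}_w)$ has consecutions $0,1,2,\ldots$ from $0$ in $\mathbold{w}$, then $1$ in $\mathbold{c}_w$, then $2$ in $\mathbold{w}$ … — here I must recheck carefully. I expect the equality to follow exactly as in Case II of the symmetric theorem, where both sides count the same chain. The same argument applies to $\ell$. For $h=0$ both sides are $0$. Uniqueness of $Q_A$ and $Q_D$ (up to sign) is inherited from Theorem~\ref{evenoddpolynomialpencil}. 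Compatibility with the coupling blocks may force a sign normalisation; I will fix the sign so that the relevant parameter of $Q_A$ (resp.\ $Q_D$) at the row carrying $B$ is $+1$.

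\emph{Linearization.} I would write $\mathbb{L}(\lambda)=\mathrm{diag}(Q_A,Q_D)\,\widetilde{\mathbb{L}}(\lambda)$, where $\widetilde{\mathbb{L}}$ has diagonal blocks $L_A$ and $L_D$ and off-diagonal blocks $Q_A^{-1}(\cdot)$ and $Q_D^{-1}(\cdot)$. Because $Q_A$ and $Q_D$ are diagonal $\pm1$ matrices, these off-diagonal blocks differ from the original ones only by a sign $\pm B$ or $\pm B^T$. I would absorb that sign by an additional block-diagonal $\pm I$ scaling of the $D$-part on both sides, which does not affect $Q_DL_D$. The result is a pencil of exactly the GFPR form of Theorem~\ref{gfprproof}, with
\[
\mathbold{\sigma}=\mathbold{w},\quad \mathbold{\tau}=\mathbold{z}_h,\quad \mathbold{\sigma}_1=\mathbold{\tau}_1=\emptyset,\quad \mathbold{\sigma}_2=\mathbold{c}_w,\quad \mathbold{\tau}_2=\mathbold{c}_{z_h},
\]
and analogously on the $D$-side. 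The SIP conditions for $(\mathbold{w},\mathbold{c}_w)$ and $(\mathbold{z}_h,\mathbold{c}_{z_h})$ are standard properties of admissible tuples and their symmetric complements. Theorem~\ref{gfprproof} then applies provided the assignments for $\mathbold{c}_w$, $\mathbold{c}_{z_h}$, $\mathbold{c}_v$, $\mathbold{c}_{z_\ell}$ are nonsingular, and Lemma~\ref{fiedlerdiag} (with the constant nonsingular factors) finishes the proof.

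\emph{Nonsingularity, the main obstacle.} Since $h$ is even, $0\notin\mathbold{c}_w$, so $M^A_{\mathbold{c}_w}$ is a product of nonsingular Fiedler matrices. For $\mathbold{c}_{z_h}$ the only dangerous index is $-m$, whose factor involves the leading coefficient $A_m$. If $A_m$ is nonsingular there is nothing to check. Otherwise the hypothesis $\mathrm{Ind}(\mathbold{z}_h+m)=0$ gives that $\mathbold{c}_{z_h}+m$ omits $0$ by the Remark following the definition of simple admissible tuples, so $-m\notin\mathbold{c}_{z_h}$. The $D$-side is handled identically. The delicate part is the careful verification of the index identities $i_0=c_0$ and of the sign bookkeeping between $Q_A$, $Q_D$ and the coupling blocks.
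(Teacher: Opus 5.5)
Your overall route is the paper's. You verify $i_0(\mathbold{w})=c_0(\mathbold{w},\mathbold{c}_w)$ and its analogue for $\mathbold{v}$, and you use Theorem~\ref{evenoddpolynomialpencil} for the diagonal blocks. You then apply Theorem~\ref{gfprproof} with $\mathbold{\sigma}_1=\mathbold{\tau}_1=\emptyset$, $\mathbold{\sigma}_2=\mathbold{c}_w$, $\mathbold{\tau}_2=\mathbold{c}_{z_h}$, and check nonsingularity through $0\notin\mathbold{c}_w$ and $\mathrm{Ind}(\mathbold{z}_h+m)=0\Rightarrow -m\notin\mathbold{c}_{z_h}$. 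That last part is fine.

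\textbf{The index computation.} The one computation on which $T$-evenness of the coupling blocks rests is wrong as written and left unfinished. Your value $i_0(\mathbold{w})=h$ is false for $h\ge 2$. In $\mathbold{w}=(h-1,h,h-3,h-2,\ldots,1,2,0)$ every $j-1$ precedes $j$, so $(2,1,0)$ is not a subtuple and $i_0(\mathbold{w})=1$. The consecution chain you begin (the $0$ from $\mathbold{w}$, then a $1$ from $\mathbold{c}_w$, then a $2$ from $\mathbold{w}$) cannot exist. The $0$ is the last entry of $\mathbold{w}$, and everything after it lies in $\mathbold{c}_w$, which contains only odd indices. Hence $c_0(\mathbold{w},\mathbold{c}_w)=1$, realized by $(0,1)$ using the trailing $1$ of $\mathbold{c}_w$. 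So both sides equal $1$ for even $h>0$ and $0$ for $h=0$, which is what the paper records. Appealing to Case II of the symmetric theorem does not replace this two-line count. That case involves the extra tuple $\mathbold{t}_{w_h}$. Had you kept $i_0(\mathbold{w})=h$, you would have found a spurious mismatch.

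\textbf{The signs.} Your factorization $\mathbb{L}=\mathrm{diag}(Q_A,Q_D)\,\widetilde{\mathbb{L}}$ is more careful than the paper. The paper applies Theorem~\ref{gfprproof} directly to a pencil with $Q_AL_A$ and $Q_DL_D$ on the diagonal, which is not literally a GFPR.

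However, your step absorbing the sign by a $\pm I$ scaling of the $D$-part does not work in general. Let $\epsilon$ be the parameter of $Q_A$ at block row $m-i_0(\mathbold{w})$ and $\eta$ the parameter of $Q_D$ at block row $k-i_0(\mathbold{v})$. Any block-diagonal $\pm I$ scalings that preserve $L_A$ and $L_D$ multiply both coupling blocks by the same sign, so the product $\epsilon\eta$ is invariant. If $\epsilon\eta=-1$, then $\widetilde{\mathbb{L}}$ is the GFPR of the system matrix with $-B^T$ in place of $B^T$. Its Schur complement has the opposite sign on the rational part, so it is generically not equivalent to $\mathcal{S}(\lambda)$.

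$T$-evenness does not exclude this, since replacing $Q_A$ by $-Q_A$ preserves it. What actually does the work is your normalization: use the freedom up to $-1$ in Theorem~\ref{evenoddpolynomialpencil} to make $\epsilon=\eta=1$. Then $\widetilde{\mathbb{L}}$ is exactly the GFPR and no absorption is needed. This normalization is genuinely required, so the word ``unique'' in the statement has to be read with this sign choice.
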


\begin{proof}
    It is given that $h$ is even and $\mathbold{ w}$ is the simple admissible tuple of $\{0,1,\ldots,h\}$. So, we have, $\mathbold{ w}=(h-1:h,\ldots,3:4,1:2,0)$ and $\mathbold{ c}_{w}=(h-1,h-3,\ldots,1)$. Therefore, when $h=0$, $i_{0}(\mathbold{ w})=c_{0}(\mathbold{ w},\mathbold{ c_{w}})=0$ and when $h>0$, $i_{0}(\mathbold{ w})=c_{0}(\mathbold{ w},\mathbold{ c_{w}})=1$. With similar arguments, since $\ell$ is even and $\mathbold{ v}$ is the simple admissible tuple of $\{0,1,\ldots,\ell\}$, it can be shown that $i_{0}(\mathbold{ v})=c_{0}(\mathbold{ v},\mathbold{ c_{v}})$ for all $\ell\geq0$. From Theorem \ref{evenoddpolynomialpencil}, $Q_{A}L_{A}(\lambda)$ and $Q_{D}L_{D}(\lambda)$ are $T-$even. Consequently $\mathbb{L}(\lambda)$ is $T-$even.

   As $h$ and $\ell$ are even, we have $\mathbold{ c}_{w}=(h-1,h-3,\ldots,1)$ and $\mathbold{ c}_{v}=(\ell -1,\ell -3,\ldots,1)$. Thus $0\notin\mathbold{ c}_{w}$ and $0\notin\mathbold{ c}_{v}$ and hence the matrix assignments for both $\mathbold{ c}_{w}$ and $\mathbold{ c}_{v}$ are nonsingular. Therefore, taking $\sigma=\mathbold{ w}$, $\tau=\mathbold{ z}_{h}$, $\sigma_{1}=\tau_{1}=\phi$, $\sigma_{2}=\mathbold{ c}_{w}$ and $\tau_{2}=\mathbold{ c}_{\mathbold{ z}_{h}}$ and $\gamma=\mathbold{ v}$, $\delta=\mathbold{ z}_{\ell}$, $\gamma_{1}=\delta_{1}=\phi$, $\gamma_{2}=\mathbold{ c}_{v}$ and $\delta_{2}=\mathbold{ c}_{\mathbold{ z}_{\ell}}$, from Theorem \ref{gfprproof} that $\mathbb{L}(\lambda)$ is a Rosenbrock linearization of $G(\lambda)$ if the matrix assignment for $\mathbold{ c}_{\mathbold{ z}_{h}}$ and $\mathbold{ c}_{\mathbold{ z}_{h}}$ are nonsingular. If the leading coefficient of $A(\lambda)$ and $D(\lambda)$ are nonsingular then the matrix assignment for $\mathbold{ c}_{\mathbold{ z}_{h}}$ and $\mathbold{ c}_{\mathbold{ z}_{h}}$ are nonsingular. If the leading coeffient of $A(\lambda)$ is singular and $\mathrm{Ind}(\mathbold{ z_{h}}+m)=0$, then $0\notin\mathbold{ c}_{\mathbold{ z}}+m\implies -m\notin\mathbold{ c}_{z}$ and hence the matrix assignment for $\mathbold{ c}_{z}$ is nonsingular. Similarly, if the leading coefficient of $D(\lambda)$ is singular and $\mathrm{Ind}(\mathbold{ z_{\ell}}+k)=0$, then $0\notin\mathbold{ c}_{\mathbold{ z_{\ell}}}+k\implies -k\notin\mathbold{ c}_{z_{\ell}}$ and hence the matrix assignment for $\mathbold{ c}_{z_{\ell}}$ is nonsingular. Thus $\mathbb{L}(\lambda)$ is a $T-$even Rosenbrock linearization of $G(\lambda)$.
\end{proof}

\begin{example}
Let $G(\lambda)=D(\lambda)+B^{T}A(\lambda)^{-1}B\in\mathbb{C}(\lambda)^{r\times r}$ where $A(\lambda)=\lambda^{5}A_{5}+\lambda^{4}A_{4}+\lambda^{3}A_{3}+\lambda^{2}A_{2}+\lambda A_{1}+A_{0}\in\mathbb{C}[\lambda]^{n\times n}$, $D(\lambda)=\lambda^{4}D_{4}+\lambda^{3}D_{3}+\lambda^{2}D_{2}+\lambda  D_{1}+D_{0}\in\mathbb{C}[\lambda]^{r\times r}$, $C\in\mathbb{C}^{r\times n}$, and $B\in\mathbb{C}^{n\times r}$. If we consider $h=2$, $\mathbold{w}=(1,2,0)$, $\mathbold{z}_{h}=(-4,-3,-5)$, $\mathbold{c}_{w}=(1)$, and, $\mathbold{c}_{z_{h}}=(-4)$. Similarly $\ell=0$, $\mathbold{v}=(0)$, $\mathbold{z}_{\ell}=(-4,-3,-2,-1)$, $\mathbold{c}_{v}=\phi$, and, $\mathbold{c}_{z_{\ell}}=(-4,-3,-2,-4,-3,-4)$. Also take $Q_{A}=\mathrm{diag}(I_{n},I_{n},-I_{n},I_{n},-I_{n})$ and $Q_{D}=\mathrm{diag}(I_{r},-I_{r},I_{r},-I_{r})$. Then,
     {\scriptsize \[
      \mathbb{L}(\lambda)
        =\left[\begin{array}{ccccc|cccc}
           0 & -I_{n} & \lambda I_{n} & 0 & 0 & 0 & 0 & 0 & 0 \\
           -I_{n} & \lambda A_{5}-A_{4} & \lambda A_{4} & 0 & 0 & 0 & 0 & 0 & 0  \\
           -\lambda I_{n} & -\lambda A_{4} & -\lambda A_{3} - A_{3} & -A_{1} & I_{n} & 0 & 0 & 0 & 0 \\
           0 & 0 & A_{1} & -\lambda A_{1}+A_{0} & \lambda I_{n} & 0 & 0 & 0 & B \\
           0 & 0 & I_{n} & -\lambda I_{n} & 0 & 0 & 0 & 0 & 0 \\\hline
           0 & 0 & 0 & 0 & 0 & 0 & 0 & -D_{4} & \lambda D_{4} \\
           0 & 0 & 0 & 0 & 0 & 0 & D_{4} & -\lambda D_{4} + D_{3} & -\lambda D_{3} \\
           0 & 0 & 0 & 0 & 0 & -D_{4} & \lambda D_{4}-D_{3} & \lambda D_{3} -D_{2} & \lambda D_{2} \\
           0 & 0 & 0 & B^{T} & 0 & -\lambda D_{4} & -\lambda D_{3} & -\lambda D_{2} & -\lambda D_{1} - D_{0}
        \end{array}\right]
      \]}
      is a $T-$even Rosenbrock linearization of $G(\lambda)$ whenever $D_{4}$ is nonsingular.
\end{example}

\subsection{T-odd linearizations}
A rational matrix function $G(\lambda)$ is said to be $T-$odd if $G(-\lambda)^{T}=-G(\lambda).$ A rational matrix function is said to have a $T-$odd realization if $G(\lambda)=D(\lambda)+C(A(\lambda)^{-1}B$ with $A(\lambda)$ and $D(\lambda)$ $T-$odd and $C=-B^{T}$. A system matrix $\mathcal{S}(\lambda)=\left[\begin{array}{c|c} A(\lambda) & B \\\hline C & D(\lambda)\end{array}\right]$ is said to be $T-$odd if $A(\lambda)$ and $D(\lambda)$ are $T-$odd and $C=-B^{T}$. It is easy to check that $G(\lambda)=D(\lambda)+C(A(\lambda)^{-1}B$ is a $T-$odd realization of $G(\lambda)$ whenever $\mathcal{S}(\lambda)=\left[\begin{array}{c|c} A(\lambda) & B \\\hline C & D(\lambda)\end{array}\right]$ is $T-$odd.
 In this section we construct a $T-$even linearization of $G(\lambda)$ whenever it has a $T-$even linearization.

\begin{theorem}\label{toddproof}
    Let $G(\lambda)$ be a rational matrix with $T-$odd realization and $\mathcal{S}(\lambda)$ be the corresponding system matrix. Let $h\in\{0,1,\ldots,m-1\}$ is even, $\mathbold{ w}$ be the simple admissible tuple of $\{0,1,\ldots,h\}$ and, $\mathbold{ c}_{w}$ be the symmetric complement of $\mathbold{ w}$. Let $\mathbold{ z}_{h}+m$ be the admissible tuple of $\{0,1,\ldots,m-h-1\}$ and $\mathbold{ c}_{z_{h}}+m$ be the symmetric complement of $\mathbold{ z}_{h}+m$. Similarly, let $\ell\in\{0,1,\ldots, k-1\}$ is even, $\mathbold{ v}$ is the simple admissible tuple of $\{0,1,\ldots,\ell\}$, and, $\mathbold{ c}_{v}$ be the symmetric complement of $\mathbold{ v}$. Let $\mathbold{ z}_{\ell}+k$ be the admissible tuple of $\{0,1,\ldots,k-\ell-1\}$ and $\mathbold{ c}_{z_{\ell}}+k$ be the symmetric complement of $\mathbold{ z}_{\ell}+k$. Then there exist unique quasi identity matrices $Q_{A}$ and $Q_{D}$ such that,
    \[
    \mathbb{L}(\lambda)=\left[\begin{array}{c|c}
                               Q_{A}L_{A}(\lambda) & e_{m-i_{0}(\mathbold{ w})}e^{T}_{k-c_{0}(\mathbold{ v},\mathbold{ c}_{v})}\otimes B \\\hline
                               -e_{k-i_{0}(\mathbold{ v})}e^{T}_{m-c_{0}(\mathbold{ w},\mathbold{ c}_{w})}\otimes B^{T} & Q_{D}L_{D}(\lambda)
                        \end{array}\right]
    \]
    is $T-$odd, where $L_{A}(\lambda)=\left(\lambda M^{A}_{\mathbold{ z}_{h}}-M^{A}_{\mathbold{ w}}\right)M^{A}_{\mathbold{ c}_{w}}M^{A}_{\mathbold{ c}_{z}}$ and $L_{D}(\lambda)=\left(\lambda M^{D}_{\mathbold{ z}_{\ell}}-M^{D}_{\mathbold{ v}}\right)M^{D}_{\mathbold{ c}_{v}}M^{D}_{\mathbold{ c}_{z_{\ell}}}$ are the pencils as defined in Theorem \ref{evenoddpolynomialpencil} for the matrix polynomials $A(\lambda)$ and $D(\lambda)$ respectively. 
       When the leading coefficient of $A(\lambda)$ is singular, assume $\mathrm{Ind}(\mathbold{ z}_{h}+m)=0$ and when the leading coeffiecint of $D(\lambda)$ is singular, assume $\mathrm{Ind}(\mathbold{ z}_{\ell}+k)=0$. Then $\mathbb{L}(\lambda)$ is a Rosenbrock linearization of $G(\lambda)$.
\end{theorem}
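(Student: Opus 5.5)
The plan mirrors the proof of Theorem~\ref{tevenproof}, and the work splits into two parts: the $T$-odd structure of $\mathbb{L}(\lambda)$, and its being a Rosenbrock linearization.

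\textbf{Structure.} First, record the consequences of $h$ and $\ell$ being even. Since $\mathbold{w}$ is the simple admissible tuple of $\{0,1,\ldots,h\}$, we have $\mathbold{w}=(h-1:h,\ldots,1:2,0)$ and $\mathbold{c}_{w}=(h-1,h-3,\ldots,1)$. Hence $i_{0}(\mathbold{w})=c_{0}(\mathbold{w},\mathbold{c}_{w})$: both equal $0$ when $h=0$ and $1$ when $h>0$. The same computation gives $i_{0}(\mathbold{v})=c_{0}(\mathbold{v},\mathbold{c}_{v})$. Write $p=m-i_{0}(\mathbold{w})$ and $q=k-i_{0}(\mathbold{v})$. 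The off-diagonal blocks are then $e_{p}e_{q}^{T}\otimes B$ and $-e_{q}e_{p}^{T}\otimes B^{T}=-(e_{p}e_{q}^{T}\otimes B)^{T}$.

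Next, apply Theorem~\ref{evenoddpolynomialpencil} in its $T$-odd version to the $T$-odd polynomials $A(\lambda)$ and $D(\lambda)$. This gives quasi-identity matrices $Q_{A}$ and $Q_{D}$, unique up to sign, such that $Q_{A}L_{A}(-\lambda)^{T}=-Q_{A}L_{A}(\lambda)$, and similarly for $D$. The pencil $\mathbb{L}(\lambda)$ is constant off the diagonal, so $\mathbb{L}(-\lambda)^{T}=-\mathbb{L}(\lambda)$ reduces to the diagonal conditions together with the requirement that the $(2,1)$ block equal minus the transpose of the $(1,2)$ block. The latter holds by the index equalities above.

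\textbf{Main obstacle: the sign normalization.} The $(1,2)$ and $(2,1)$ blocks of $\mathbb{L}(\lambda)$ are fixed as $+B$ and $-B^{T}$, with no factors of $Q_{A}$ or $Q_{D}$, while the linearization property is inherited from the pencil with $Q_{A}$ and $Q_{D}$ removed. So the $p$-th parameter of $Q_{A}$ and the $q$-th parameter of $Q_{D}$ must be compatible with $\mathcal{S}(\lambda)$. I would fix the free sign in Theorem~\ref{evenoddpolynomialpencil} by requiring the $p$-th parameter of $Q_{A}$ to be $+1$, and likewise the $q$-th parameter of $Q_{D}$. This choice of representative is also what makes $Q_{A}$ and $Q_{D}$ unique.

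\textbf{Linearization.} Write $\mathbb{L}(\lambda)=\mathrm{diag}(Q_{A},Q_{D})\,\widehat{\mathbb{L}}(\lambda)$. Here $\widehat{\mathbb{L}}(\lambda)$ carries the blocks $L_{A}(\lambda)$ and $L_{D}(\lambda)$, and its off-diagonal blocks are $Q_{A}$ and $Q_{D}$ applied to those of $\mathbb{L}(\lambda)$. By the normalization, these are $e_{p}e_{q}^{T}\otimes B$ and $-e_{q}e_{p}^{T}\otimes B^{T}$, matching $\mathcal{S}(\lambda)$ with $C=-B^{T}$. Then take $\mathbold{\sigma}=\mathbold{w}$, $\mathbold{\tau}=\mathbold{z}_{h}$, $\mathbold{\sigma}_{1}=\mathbold{\tau}_{1}=\emptyset$, $\mathbold{\sigma}_{2}=\mathbold{c}_{w}$, $\mathbold{\tau}_{2}=\mathbold{c}_{z_{h}}$, and analogously for $D$. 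With these choices $\widehat{\mathbb{L}}(\lambda)$ is a GFPR of $G(\lambda)$.

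Nonsingularity of the matrix assignments follows exactly as in Theorem~\ref{tevenproof}. We have $0\notin\mathbold{c}_{w}$ and $0\notin\mathbold{c}_{v}$. Either the leading coefficients are nonsingular, or the assumption $\mathrm{Ind}(\mathbold{z}_{h}+m)=0$ (resp.\ $\mathrm{Ind}(\mathbold{z}_{\ell}+k)=0$) gives $-m\notin\mathbold{c}_{z_{h}}$ (resp.\ $-k\notin\mathbold{c}_{z_{\ell}}$). Theorem~\ref{gfprproof} then shows that $\widehat{\mathbb{L}}(\lambda)$ is a Rosenbrock linearization. Since $\mathrm{diag}(Q_{A},Q_{D})$ is constant and nonsingular, Lemma~\ref{fiedlerdiag} shows that $\mathbb{L}(\lambda)$ is one as well.
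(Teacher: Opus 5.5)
Your proof is correct and follows the paper's route. The equalities $i_0(\mathbold{w})=c_0(\mathbold{w},\mathbold{c}_w)$ and $i_0(\mathbold{v})=c_0(\mathbold{v},\mathbold{c}_v)$ give the $T$-odd structure. Theorem~\ref{gfprproof}, with the same choices $\mathbold{\sigma}=\mathbold{w}$, $\mathbold{\tau}=\mathbold{z}_h$, $\mathbold{\sigma}_2=\mathbold{c}_w$, $\mathbold{\tau}_2=\mathbold{c}_{z_h}$ (and likewise for $D$) and the same nonsingularity checks, gives the linearization property.

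Where you go beyond the paper is in handling $Q_A$ and $Q_D$. The paper applies Theorem~\ref{gfprproof} directly to $\mathbb{L}(\lambda)$. But $\mathbb{L}(\lambda)$ is not literally a GFPR: its diagonal blocks are $Q_AL_A(\lambda)$ and $Q_DL_D(\lambda)$, while its coupling blocks carry no $Q$-factors. Your factorization $\mathbb{L}(\lambda)=\mathrm{diag}(Q_A,Q_D)\,\widehat{\mathbb{L}}(\lambda)$, together with a sign normalization, closes this gap. The step is genuinely needed. With the wrong relative sign, $\widehat{\mathbb{L}}(\lambda)$ is a GFPR of the system matrix obtained by negating one coupling block of $\mathcal{S}(\lambda)$, and that system matrix's transfer function generally differs from $G(\lambda)$.

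Your normalization is sufficient but stronger than necessary. Only the product of the $p$-th parameter of $Q_A$ and the $q$-th parameter of $Q_D$ must be $+1$. This is because negating both coupling blocks amounts to conjugation by $\mathrm{diag}(I_{mn},-I_{rk})$, which leaves $L_A(\lambda)$ and $L_D(\lambda)$ unchanged. The paper's $T$-odd example in fact has both of these parameters equal to $-1$.

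Lastly, Lemma~\ref{fiedlerdiag} is stated only for the Fiedler-based pencil $\mathbb{T}(\lambda)$. To invoke it literally, combine $\mathrm{diag}(Q_A,Q_D)$ with the factorization $\widehat{\mathbb{L}}(\lambda)=\mathrm{diag}(\mathcal{X}_A,\mathcal{X}_D)\,\mathbb{T}(\lambda)\,\mathrm{diag}(\mathcal{Y}_A,\mathcal{Y}_D)$ from the proof of Theorem~\ref{gfprproof}.
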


\begin{proof}
    It is given that $h$ is even and $\mathbold{ w}$ is the simple admissible tuple of $\{0,1,\ldots,h\}$. So, we have, $\mathbold{ w}=(h-1:h,\ldots,3:4,1:2,0)$ and $\mathbold{ c}_{w}=(h-1,h-3,\ldots,1)$. Therefore, when $h=0$, $i_{0}(\mathbold{ w})=c_{0}(\mathbold{ w},\mathbold{ c_{w}})=0$ and when $h>0$, $i_{0}(\mathbold{ w})=c_{0}(\mathbold{ w},\mathbold{ c_{w}})=1$. With similar arguments, since $\ell$ is even and $\mathbold{ v}$ is the simple admissible tuple of $\{0,1,\ldots,\ell\}$, it can be shown that $i_{0}(\mathbold{ v})=c_{0}(\mathbold{ v},\mathbold{ c_{v}})$ for all $\ell\geq0$. From Theorem \ref{evenoddpolynomialpencil}, $Q_{A}L_{A}(\lambda)$ and $Q_{D}L_{D}(\lambda)$ are $T-$odd. Consequently $\mathbb{L}(\lambda)$ is $T-$odd.

   As $h$ and $\ell$ are even, we have $\mathbold{ c}_{w}=(h-1,h-3,\ldots,1)$ and $\mathbold{ c}_{v}=(\ell -1,\ell -3,\ldots,1)$. Thus $0\notin\mathbold{ c}_{w}$ and $0\notin\mathbold{ c}_{v}$ and hence the matrix assignments for both $\mathbold{ c}_{w}$ and $\mathbold{ c}_{v}$ are nonsingular. Therefore, taking $\sigma=\mathbold{ w}$, $\tau=\mathbold{ z}_{h}$, $\sigma_{1}=\tau_{1}=\phi$, $\sigma_{2}=\mathbold{ c}_{w}$ and $\tau_{2}=\mathbold{ c}_{\mathbold{ z}_{h}}$ and $\gamma=\mathbold{ v}$, $\delta=\mathbold{ z}_{\ell}$, $\gamma_{1}=\delta_{1}=\phi$, $\gamma_{2}=\mathbold{ c}_{v}$ and $\delta_{2}=\mathbold{ c}_{\mathbold{ z}_{\ell}}$, from Theorem \ref{gfprproof} that $\mathbb{L}(\lambda)$ is a Rosenbrock linearization of $G(\lambda)$ if the matrix assignment for $\mathbold{ c}_{\mathbold{ z}_{h}}$ and $\mathbold{ c}_{\mathbold{ z}_{h}}$ are nonsingular. If the leading coefficient of $A(\lambda)$ and $D(\lambda)$ are nonsingular then the matrix assignment for $\mathbold{ c}_{\mathbold{ z}_{h}}$ and $\mathbold{ c}_{\mathbold{ z}_{h}}$ are nonsingular. If the leading coeffient of $A(\lambda)$ is singular and $\mathrm{Ind}(\mathbold{ z_{h}+m}=0$, then $0\notin\mathbold{ c}_{\mathbold{ z}}+m\implies -m\notin\mathbold{ c}_{z}$ and hence the matrix assignment for $\mathbold{ c}_{z}$ is nonsingular. Similarly, if the leading coefficient of $D(\lambda)$ is singular and $\mathrm{Ind}(\mathbold{ z_{\ell}+k}=0$, then $0\notin\mathbold{ c}_{\mathbold{ z_{\ell}}}+k\implies -k\notin\mathbold{ c}_{z_{\ell}}$ and hence the matrix assignment for $\mathbold{ c}_{z_{\ell}}$ is nonsingular. Thus $\mathbb{L}(\lambda)$ is a $T-$odd Rosenbrock linearization of $G(\lambda)$.
\end{proof}

\begin{example}
Let $G(\lambda)=D(\lambda)+B^{T}A(\lambda)^{-1}B\in\mathbb{C}(\lambda)^{r\times r}$ where $A(\lambda)=\lambda^{5}A_{5}+\lambda^{4}A_{4}+\lambda^{3}A_{3}+\lambda^{2}A_{2}+\lambda A_{1}+A_{0}\in\mathbb{C}[\lambda]^{n\times n}$, $D(\lambda)=\lambda^{4}D_{4}+\lambda^{3}D_{3}+\lambda^{2}D_{2}+\lambda  D_{1}+D_{0}\in\mathbb{C}[\lambda]^{r\times r}$, $C\in\mathbb{C}^{r\times n}$, and $B\in\mathbb{C}^{n\times r}$. If we consider $h=2$, $\mathbold{w}=(1,2,0)$, $\mathbold{z}_{h}=(-4,-3,-5)$, $\mathbold{c}_{w}=(1)$, and, $\mathbold{c}_{z_{h}}=(-4)$. Similarly $\ell=0$, $\mathbold{v}=(0)$, $\mathbold{z}_{\ell}=(-4,-3,-2,-1)$, $\mathbold{c}_{v}=\phi$, and, $\mathbold{c}_{z_{\ell}}=(-4,-3,-2,-4,-3,-4)$. Also take $Q_{A}=\mathrm{diag}(I_{n},-I_{n},I_{n},-I_{n},-I_{n})$ and $Q_{D}=\mathrm{diag}(I_{r},-I_{r},I_{r},-I_{r})$. Then,
     {\scriptsize \[
      \mathbb{L}(\lambda)
        =\left[\begin{array}{ccccc|cccc}
           0 & -I_{n} & \lambda I_{n} & 0 & 0 & 0 & 0 & 0 & 0 \\
           I_{n} & -\lambda A_{5}+A_{4} & -\lambda A_{4} & 0 & 0 & 0 & 0 & 0 & 0  \\
           \lambda I_{n} & \lambda A_{4} & \lambda A_{3} + A_{2} & A_{1} & -I_{n} & 0 & 0 & 0 & 0 \\
           0 & 0 & -A_{1} & \lambda A_{1}-A_{0} & -\lambda I_{n} & 0 & 0 & 0 & 1 \\
           0 & 0 & I_{n} & -\lambda I_{n} & 0 & 0 & 0 & 0 & 0 \\\hline
           0 & 0 & 0 & 0 & 0 & 0 & 0 & -D_{4} & \lambda D_{4} \\
           0 & 0 & 0 & 0 & 0 & 0 & D_{4} & -\lambda D_{4} + D_{3} & -\lambda D_{3} \\
           0 & 0 & 0 & 0 & 0 & -D_{4} & \lambda D_{4}-D_{3} & \lambda D_{3} -D_{2} & \lambda D_{2} \\
           0 & 0 & 0 & -B^{T} & 0 & -\lambda D_{4} & -\lambda D_{3} & -\lambda D_{2} & -\lambda D_{1} - D_{0}
        \end{array}\right]
      \]}
      is a $T-$even Rosenbrock linearization of $G(\lambda)$ whenever $D_{4}$ is nonsingular.
\end{example}

\subsection{Skew-symmetric linearizations}

A rational matrix $G(\lambda)$ is said to be skew-symmetric if $G(\lambda)^{T}=-G(\lambda)$. A rational matrix $G(\lambda)$ is said to have a skew symmetric realization if $G(\lambda)=D(\lambda)+CA(\lambda)^{-1}B$ if both $A(\lambda)$ and $D(\lambda)$ are skew-symmetric and $C^{T}=B$. Observe that a realization of a rational matrix $G(\lambda)$ is skew-symmetric if and only if the corresponding system matrix $\mathcal{S}(\lambda)$ is skew-symmetric. Here we cosntruct a skew-symmetric Rosenbrock linearization of $G(\lambda)$.

Consider a permutation $\alpha$ $\{0,1,\ldots,k\}$ where $k\geq 0$. If for $s\in\{0,1,\ldots,k-1\}$, $csf{\alpha}$ contains a string $(s:t)$ with $s<t$, then $s\in\{0,1,\ldots,k-1\}$ $s$ is called a right index of type$-1$ relative to $\alpha$ \cite{BFii2014}.

\begin{definition}[\cite{BFii2014}, Associated simple tuple]
Consider a permutation $\alpha$ of $\{0,1,\ldots,k\}$ for some $k\geq0$ with $csf(\alpha)=(\mathbold{ b}_{d},\mathbold{ b}_{d-1},\ldots,\mathbold{ b}_{1})$, where $\mathbold{ b}_{i}=(a_{i-1}+1:a_{i})$ for $i=2:d$ and $\mathbold{ b}_{1}=(0:a_{1}).$ For a right index of type-$1$ $s$ relative to $\alpha$ the simple tuple associated with $(\alpha,s)$ is denoted by $z_{r}(\alpha,s)$ and is given by
\begin{itemize}
    \item $z_{r}(\alpha,s)=\left(\mathbold{ b}_{d},\mathbold{ b}_{d-1},\ldots,\mathbold{ b}_{h+1},\tilde{\mathbold{ b}}_{h},\tilde{\mathbold{ b}}_{h-1},\mathbold{ b}_{h-2},\ldots,\mathbold{ b}_{1}\right)$ if $s=a_{h-1}+1\neq0$, where $\tilde{\mathbold{ b}}_{h}=(a_{h-1}+2:a_{h})$ and $\tilde{\mathbold{ b}}_{h-1}=(a_{h-2}+1:a_{h-1}+1)$.

    \item $z_{r}(\alpha,s)=\left(\mathbold{ b}_{d},\mathbold{ b}_{d-1},\ldots,\mathbold{ b}_{2},\tilde{\mathbold{ b}}_{1},\tilde{\mathbold{ b}}_{0}\right)$ if $s=0$, where $\tilde{\mathbold{ b}}_{1}=(1:a_{1})$ and $\tilde{\mathbold{ b}}_{0}=(0)$.
\end{itemize}
\end{definition}

\begin{definition}[\cite{BFii2014}, Type$-1$ index tuple]
    Consider a permutation $\alpha$ of $\{0,1,\ldots,k\}$, $k\geq0$, and let $\beta=(s_{1},\ldots,s_{r})$ be an index tuple of type$-1$ relative to $\alpha$ if, for $i=1,2,\ldots,r$, $s_{i}$ is a right index of type$-1$ relative to $z_{r}(\alpha,(s_{1},\ldots,s_{i-1}))$, where $z_{r}(\alpha,(s_{1},\ldots,s_{i-1}))=z_{r}(z_{r}(\alpha,(s_{1},\ldots,s_{i-2})),s_{i-1})$ for $i>2$.
\end{definition}

Next we recall the following theorem which is a particular case of \cite[Theorem 3.15]{BFii2014}.

\begin{theorem}[\cite{BFii2014}]\label{skewsymmetricpolypencil}
Let $A(\lambda)$ be skew symmetric and let $h\in\{0,1,\ldots, m-1\}$ be even. Let $\mathbold{ w}$ be the simple admissible tuple of $\{0,1,\ldots,h\}$ and $\mathbold{ c}_{w}$ be the symmetric complement of $\mathbold{ w}$. Let $\mathbold{ z}+m$ be any admissible tuple of $\{0,1,\ldots,m-h-1\}$ and $\mathbold{ c}_{z}+m$ be the symmetric complement of $\mathbold{ z}+m.$ Let $\mathbold{ t}_{w}$ containing indices from $\{0,1,\ldots,h-1\}$ and $\mathbold{ t}_{z}+m$ containing indices from $\{0,1,\ldots,m-h-2\}$ be the right index tuples of type$-1$ relative to $\mathrm{rev}(\mathbold{ w})$ and $\mathrm{rev}(\mathbold{ z}+m)$, respectively. Consider
\[
L(\lambda)=M^{A}_{\mathrm{rev}(\mathbold{ t}_{z}}M^{A}_{\mathrm{rev}(\mathbold{ t}_{w}}\left(\lambda M^{A}_{\mathbold{ z}}-M^{A}_{\mathbold{ w}}\right)M^{A}_{\mathbold{ c}_{w}}M^{A}_{\mathbold{ t}_{w}}M^{A}_{\mathbold{ c}_{z}}M^{A}_{\mathbold{ t}_{z}}.
\]
Then, upto multiplication by $-1$, there exist a unique quasi-identity matrix $Q$ such that $QL(\lambda)$ is skew-symmetric.
\end{theorem}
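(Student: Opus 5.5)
The plan is to reduce skew-symmetry of $QL(\lambda)$ to a sign-consistency problem on the block pattern of $L(\lambda)$. Write $L(\lambda)=\lambda L_1-L_0$ and regard $L_1,L_0$ as $m\times m$ block matrices with $n\times n$ blocks. The argument has three steps.

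\textbf{Step 1: $L(\lambda)$ is block symmetric, $L(\lambda)^{\mathcal{B}}=L(\lambda)$.}
\begin{itemize}
\item Each $M^{A}_i$ satisfies $(M^{A}_i)^{\mathcal{B}}=M^{A}_i$.
\item For products of such elementary matrices, block transposition reverses the order of the factors. This holds because in every product at most one block of each factor pair is a coefficient block, and the others are $0$ or $I_n$, so the block products never need the blocks to commute.
\item Applying $\mathcal{B}$ to $L(\lambda)$ therefore gives
\[
M^{A}_{\mathrm{rev}(\mathbold{t}_z)}M^{A}_{\mathrm{rev}(\mathbold{c}_z)}M^{A}_{\mathrm{rev}(\mathbold{t}_w)}M^{A}_{\mathrm{rev}(\mathbold{c}_w)}\left(\lambda M^{A}_{\mathrm{rev}(\mathbold{z})}-M^{A}_{\mathrm{rev}(\mathbold{w})}\right)M^{A}_{\mathbold{t}_w}M^{A}_{\mathbold{t}_z}.
\]
\item I then use the defining property of the symmetric complement from \cite{BDFR2015}: $M^{A}_{\mathbold{w}}M^{A}_{\mathbold{c}_w}$ is block symmetric, equivalently $M^{A}_{\mathbold{w}}M^{A}_{\mathbold{c}_w}=M^{A}_{\mathrm{rev}(\mathbold{c}_w)}M^{A}_{\mathrm{rev}(\mathbold{w})}$, and likewise for $\mathbold{z},\mathbold{c}_z$.
\item Indices of $\mathbold{w},\mathbold{t}_w$ commute with those of $\mathbold{z},\mathbold{t}_z$, since their absolute values differ by more than one.
\item Finally, the type-$1$ condition on $\mathbold{t}_w$ relative to $\mathrm{rev}(\mathbold{w})$ is exactly what allows $M^{A}_{\mathbold{t}_w}$ to be carried past $\mathrm{rev}(\mathbold{w})$, $\mathrm{rev}(\mathbold{c}_w)$ one index at a time. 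Each $s_i$ is a right index of the successively modified tuple $z_r(\cdot,\cdot)$. I would argue this by induction on the length of $\mathbold{t}_w$, and treat $\mathbold{t}_z$ the same way.
\end{itemize}

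\textbf{Step 2: record the block pattern.}
\begin{itemize}
\item Unwinding the products, every block of $L_1$ and $L_0$ is either a \emph{coefficient block} (a signed combination of the $A_j$) or an \emph{identity block} ($0$ or $\pm I_n$). This is a standard feature of Fiedler-type products, and the same induction over factors gives it.
\item Block symmetry combined with $A_j^T=-A_j$ gives $L_{ji}^T=-L_{ij}$ for coefficient blocks and $L_{ji}^T=L_{ij}$ for identity blocks.
\item For $Q=\epsilon_1 I_n\oplus\cdots\oplus\epsilon_m I_n$, the requirement $(QL)^T=-QL$ means $\epsilon_j L_{ji}^T=-\epsilon_i L_{ij}$. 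So every nonzero coefficient block forces $\epsilon_i=\epsilon_j$, and every nonzero identity block forces $\epsilon_i=-\epsilon_j$.
\item On the diagonal, coefficient blocks are automatically skew-symmetric. I must also check that no nonzero identity block lies on the diagonal, again by tracking the factors.
\end{itemize}

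\textbf{Step 3: solve the sign constraints.} Form the graph on $\{1,\dots,m\}$ with an edge $i\sim j$ for each nonzero off-diagonal block, labelled ``equal'' or ``opposite''. A solution $\epsilon$ exists iff every cycle has an even number of ``opposite'' edges. The solution is unique up to global sign iff the graph is connected.
\begin{itemize}
\item Connectedness should follow because $\lambda M^{A}_{\mathbold{z}}-M^{A}_{\mathbold{w}}$ already links each block row $i$ to $i\pm1$ through an identity block.
\item For existence I would not attack cycles directly. Instead I would build $\epsilon$ inductively in the spirit of \cite[Algorithm 4.14]{BFii2014}. Multiplying on the left by $M^{A}_{s}$ and on the right by the matching factor only mixes the two adjacent block rows and columns for index $s$. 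So one checks that the current valid sign vector updates to a valid one, with at most a flip of the parameters on one side of position $s$.
\end{itemize}

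I expect Step 3's consistency check, together with the type-$1$ commutation in Step 1, to be the main obstacle. Both depend on the fine combinatorics of $z_r(\alpha,s)$, so the induction on the length of $\mathbold{t}_w$ and $\mathbold{t}_z$ is where the real work lies.
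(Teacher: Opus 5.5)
The paper does not prove this statement; it quotes it as a special case of \cite[Theorem 3.15]{BFii2014}. Judged on its own, your outline stops exactly where the theorem's content begins.

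The following parts are fine, with two remarks: block symmetry, the pure coefficient/identity pattern of $L_1$ and $L_0$, and the signed-graph criterion. First, block transposition reverses the product only because the whole tuple $(\mathrm{rev}(t_w),w,c_w,t_w)$ satisfies the SIP, so the product is operation-free. A pairwise argument already fails for $M_1(P)M_1(Q)$. Second, Step 1 needs no type-1 commutation, because $M^A_{\mathrm{rev}(t_w)}$ and $M^A_{t_w}$ already sit in mirrored positions. It uses only that $M^A_wM^A_{c_w}$ and $M^A_zM^A_{c_z}$ are block symmetric, that $c_w\sim\mathrm{rev}(c_w)$ and $c_z\sim\mathrm{rev}(c_z)$, and far commutativity.

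The genuine gap is that existence of a consistent sign vector is only asserted, and that existence is the theorem. A workable mechanism is as follows. $L$ arises from $L^{(0)}=(\lambda M^A_z-M^A_w)M^A_{c_w}M^A_{c_z}$ by successive maps $L\mapsto M^A_sLM^A_s$. If $QL$ is skew-symmetric and $Q'M^A_sQ=(M^A_s)^T$, then $Q'M^A_sLM^A_s$ is skew-symmetric. Take $s\notin\{0,-m\}$ acting on block positions $p,p+1$, with $A_{|s|}\neq 0$. Then this equation requires $\epsilon_p=-\epsilon_{p+1}$, and $Q'$ is $Q$ with both of these parameters negated. For $s\in\{0,-m\}$ one negates the single parameter touched. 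This is not a flip of the parameters on one side of $s$.

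What must be proved is that each successive type-1 index $s_i$, taken relative to $z_r(\cdot,(s_1,\ldots,s_{i-1}))$, lands on a pair of opposite parameters. That is where the combinatorics of $z_r$ enters, and you do not supply it. The base case is also missing: in $L^{(0)}$ the factors $M^A_{c_w},M^A_{c_z}$ act on one side only, so your two-sided induction does not reach it. Your connectivity argument has a similar problem. It concerns $\lambda M^A_z-M^A_w$, whose identity blocks are displaced by the outer factors, whereas uniqueness needs connectivity of the block graph of $L$ itself.

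This gap is substantive, as a tuple that satisfies the SIP and gives a block-symmetric GFPR, but is not of type 1, shows. Take $m=5$, $h=4$, $w=(3,4,1,2,0)$, $c_w=(3,1)$, $z=(-5)$, $t_w=(2,1)$. After $s_1=2$ one gets $z_r=(4,3,0{:}2)$, whose only right index of type 1 is $0$, so $s_2=1$ is not allowed.

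Here $L^{(0)}$ is skew-symmetrized by $(\epsilon_1,\ldots,\epsilon_5)=(1,1,-1,1,-1)$. The step $s_1=2$ produces $(1,1,1,-1,-1)$, and at $s_2=1$ the parameters at positions $4,5$ agree. Indeed,
\[
L=M^A_1M^A_2\,L^{(0)}\,M^A_2M^A_1\quad\text{has}\quad L_{24}=L_{42}=-\lambda A_1-I_n .
\]
The $\lambda$-part forces $\epsilon_2=\epsilon_4$ and the constant part forces $\epsilon_2=-\epsilon_4$, so no quasi-identity exists.

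Everything you actually establish holds verbatim for this pencil. The deferred consistency check is therefore exactly the step that needs the type-1 hypothesis, and it is the step that is missing.
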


We now construct skew-symmetric Rosenbrock linearizations of $G(\lambda)$.

\begin{theorem}
    Let $G(\lambda)$ be a rational matrix with skew-symmetric realization and $\mathcal{S}(\lambda)$ be the corresponding system matrix. Let $h\in\{0,1,\ldots, m-1\}$ be even. Let $\mathbold{ w}$ be the simple admissible tuple of $\{0,1,\ldots,h\}$ and $\mathbold{ c}_{w}$ be the symmetric complement of $\mathbold{ w}$. Let $\mathbold{ z}_{h}+m$ be any admissible tuple of $\{0,1,\ldots,m-h-1\}$ and $\mathbold{ c}_{{z}_{h}}+m$ be the symmetric complement of $\mathbold{ z}_{h}+m.$ Let $\mathbold{ t}_{w}$ containing indices from $\{0,1,\ldots,h-1\}$ and $\mathbold{ t}_{{z}_{h}}+m$ containing indices from $\{0,1,\ldots,m-h-2\}$ be the right index tuples of type$-1$ relative to $\mathrm{rev}(\mathbold{ w})$ and $\mathrm{rev}(\mathbold{ z}_{h}+m)$, respectively. Similarly, let $\ell\in\{0,1,\ldots, k-1\}$ be even. Let $\mathbold{ v}$ be the simple admissible tuple of $\{0,1,\ldots,\ell\}$ and $\mathbold{ c}_{v}$ be the symmetric complement of $\mathbold{ v}$. Let $\mathbold{ z}_{\ell}+k$ be any admissible tuple of $\{0,1,\ldots,k-\ell-1\}$ and $\mathbold{ c}_{{z}_{\ell}}+k$ be the symmetric complement of $\mathbold{ z}_{\ell}+k.$ Let $\mathbold{ t}_{v}$ containing indices from $\{0,1,\ldots,\ell-1\}$ and $\mathbold{ t}_{{z}_{\ell}}+k$ containing indices from $\{0,1,\ldots,k-\ell-2\}$ be the right index tuples of type$-1$ relative to $\mathrm{rev}(\mathbold{ v})$ and $\mathrm{rev}(\mathbold{ z}_{\ell}+k)$, respectively. Then there exist unique quasi-identity matrices $Q_{A}$ and $Q_{D}$ such that,
    \[
    \mathbb{L}(\lambda)=\left[\begin{array}{c|c}
                               Q_{A}L_{A}(\lambda) & e_{m-i_{0}(rev(\mathbold{ t}_{w}),\mathbold{ w})}e^{T}_{k-c_{0}(\mathbold{ v},\mathbold{ c}_{v},\mathbold{ t}_{v}}\otimes B \\\hline
                               e_{k-i_{0}(rev(\mathbold{ t}_{v},\mathbold{ v}))}e^{T}_{m-c_{0}(\mathbold{ w},\mathbold{ c}_{w},\mathbold{ t}_{w})}\otimes B^{T} & Q_{D}L_{D}(\lambda)
                        \end{array}\right]
    \]
    is skew-symmetric, where $$L_{A}(\lambda)=M^{A}_{rev(\mathbold{ t}_{{z}_{h}})}M^{A}_{rev(\mathbold{ t}_{w})}\left(\lambda M^{A}_{\mathbold{ z}}-M^{A}_{\mathbold{ w}}\right)M^{A}_{\mathbold{ c}_{w}}M^{A}_{\mathbold{ t}_{w}}M^{A}_{{\mathbold{ c}}_{{z}_{\ell}}}M^{A}_{\mathbold{ t}_{{z}_{h}}}$$ 
    
    and $$L_{D}(\lambda)=M^{D}_{rev(\mathbold{ t}_{{z}_{\ell}})}M^{D}_{rev(\mathbold{ t}_{v})}\left(\lambda M^{D}_{\mathbold{ z}_{h}}-M^{D}_{\mathbold{ w}}\right)M^{D}_{\mathbold{ c}_{w}}M^{D}_{\mathbold{ t}_{w}}M^{D}_{\mathbold{ c}_{{z}_{\ell}}}M^{D}_{\mathbold{ t}_{{z}_{\ell}}}$$ are the pencils as defined in Theorem \ref{skewsymmetricpolypencil} for the matrix polynomials $A(\lambda)$ and $D(\lambda)$ respectively.

    When the leading coefficient of $A(\lambda)$ is singular, assume $\mathrm{Ind}(\mathbold{ z}_{h}+m)=0$. Further, assume that $0\notin\mathbold{ t}_{w}$ (resp., $-m\notin\mathbold{ t}_{z_{h}})$ when $A_{0}$ (resp., $A_{m})$ is singular. Similarly when the leading coefficient of $D(\lambda)$ is singular, assume $\mathrm{Ind}(\mathbold{ z}_{\ell}+k)=0$. Further, assume that $0\notin\mathbold{ t}_{v}$ (resp., $-k\notin\mathbold{ t}_{z_{\ell}})$ when $D_{0}$ (resp., $D_{k})$ is singular. Then $\mathbb{L}(\lambda)$ is a Rosenbrock linearization of $G(\lambda)$.
\end{theorem}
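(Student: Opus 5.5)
The proof will follow the pattern of Theorems \ref{tevenproof} and \ref{toddproof}. There are two tasks: show that $\mathbb{L}(\lambda)$ is skew-symmetric, and show that it falls under Theorem \ref{gfprproof}.

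\textbf{Skew-symmetry.} By Theorem \ref{skewsymmetricpolypencil}, applied separately to $A(\lambda)$ and $D(\lambda)$, there are quasi-identity matrices $Q_A$ and $Q_D$, unique up to sign, such that $Q_AL_A(\lambda)$ and $Q_DL_D(\lambda)$ are skew-symmetric. Since $G$ has a skew-symmetric realization, $C=B^T$.

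Transposing $\mathbb{L}(\lambda)$ sends the $(2,1)$ block to the $(1,2)$ position as $e_{m-c_0(\mathbold{w},\mathbold{c}_w,\mathbold{t}_w)}e^T_{k-i_0(rev(\mathbold{t}_v),\mathbold{v})}\otimes B$. The off-diagonal blocks are therefore compatible with skew-symmetry if two things hold:
\begin{itemize}
\item the block indices agree, that is, $i_0(rev(\mathbold{t}_w),\mathbold{w})=c_0(\mathbold{w},\mathbold{c}_w,\mathbold{t}_w)$ and $i_0(rev(\mathbold{t}_v),\mathbold{v})=c_0(\mathbold{v},\mathbold{c}_v,\mathbold{t}_v)$;
\item the signs are handled correctly, which needs an overall factor of $-1$ on one coupling block. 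I would fix this by choosing the sign freedom in $Q_A$ and $Q_D$ (or, equivalently, by reading the $(2,1)$ block as $-B^T$ as in the $T$-odd case) so that the $(2,1)$ block equals minus the transpose of the $(1,2)$ block.
\end{itemize}

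For the index equalities, $h$ even gives $\mathbold{w}=(h-1:h,\ldots,1:2,0)$ and $\mathbold{c}_w=(h-1,\ldots,1)$. When $h=0$ both sides vanish. When $h>0$, $(\mathbold{w},\mathbold{c}_w)$ has exactly one consecution at $0$ and $\mathbold{w}$ one inversion at $0$. I would then use $c_t(rev(\alpha))=i_t(\alpha)$ to see that appending $\mathbold{t}_w$ on the right adds the same amount as prepending $rev(\mathbold{t}_w)$ on the left. The same argument applies to $\ell$.

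\textbf{Linearization.} I take the following identifications:
\begin{itemize}
\item for $A$: $\sigma=\mathbold{w}$, $\tau=\mathbold{z}_h$, $\sigma_1=rev(\mathbold{t}_w)$, $\tau_1=rev(\mathbold{t}_{z_h})$, $\sigma_2=(\mathbold{c}_w,\mathbold{t}_w)$, $\tau_2=(\mathbold{c}_{z_h},\mathbold{t}_{z_h})$;
\item for $D$: the analogous tuples built from $\mathbold{v}$ and $\mathbold{z}_\ell$.
\end{itemize}
The factors $M_{\mathbold{c}_{z_h}}$ and $M_{\mathbold{t}_{z_h}}$ commute with $M_{\mathbold{t}_w}$ because their indices are far apart in absolute value, so this matches the GFPR form. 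The SIP conditions hold by the cited construction in \cite{BFii2014}. Since $Q_A$ and $Q_D$ are constant and nonsingular, Lemma \ref{fiedlerdiag} lets us absorb them.

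It then remains to check that every matrix assignment is nonsingular, i.e. that no assigned $A_0$, $A_m$, $D_0$ or $D_k$ that is singular appears at position $0$ or $-m$ (resp. $-k$):
\begin{itemize}
\item $0\notin\mathbold{c}_w$ and $0\notin\mathbold{c}_v$, because $h$ and $\ell$ are even.
\item $-m\notin\mathbold{c}_{z_h}$ when $\mathrm{Ind}(\mathbold{z}_h+m)=0$; otherwise the leading coefficient $A_m$ is assumed nonsingular. The same holds for $D$.
\item $0\notin\mathbold{t}_w$ and $-m\notin\mathbold{t}_{z_h}$ are given by hypothesis whenever the corresponding coefficient is singular, and likewise for $\mathbold{t}_v$ and $\mathbold{t}_{z_\ell}$.
\end{itemize}
Theorem \ref{gfprproof} then gives the Rosenbrock linearization.

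\textbf{Main obstacle.} I expect the hardest step to be the equality $i_0(rev(\mathbold{t}_w),\mathbold{w})=c_0(\mathbold{w},\mathbold{c}_w,\mathbold{t}_w)$ for a general type-1 tuple $\mathbold{t}_w$, together with verifying the SIP for $(\sigma_1,\sigma,\sigma_2)$. For both I would rely on the recursive structure of $z_r(\cdot,\cdot)$ from \cite{BFii2014}.
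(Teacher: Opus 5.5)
Your outline follows the paper's proof step for step. You use the same index identities: the paper makes your ``main obstacle'' explicit as $c_0(\mathbold{w},\mathbold{c}_w,\mathbold{t}_w)=2+c_2(\mathbold{t}_w)$ and $i_0(\mathrm{rev}(\mathbold{t}_w),\mathbold{w})=2+i_2(\mathrm{rev}(\mathbold{t}_w))$, then applies $c_t(\mathrm{rev}(\alpha))=i_t(\alpha)$. You also make the same appeal to Theorem~\ref{skewsymmetricpolypencil} for the diagonal blocks, the same choice $\sigma=\mathbold{w}$, $\sigma_1=\mathrm{rev}(\mathbold{t}_w)$, $\sigma_2=(\mathbold{c}_w,\mathbold{t}_w)$, etc.\ in Theorem~\ref{gfprproof}, and the same nonsingularity checks. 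No commutation is needed, since $M^A_{\mathbold{c}_w}M^A_{\mathbold{t}_w}M^A_{\mathbold{c}_{z_h}}M^A_{\mathbold{t}_{z_h}}$ already is $M^A_{(\sigma_2,\tau_2)}$. You also catch something the paper's proof passes over. With $\otimes B$ in the $(1,2)$ block and $\otimes B^T$ in the $(2,1)$ block, $\mathbb{L}(\lambda)^T=-\mathbb{L}(\lambda)$ forces $B=0$. The skew-symmetric system matrix (\ref{sysmatrix01}) with $C=B^T$ has $-B$ in its $(1,2)$ block, and so does its GFPR in Definition~\ref{gfprdefinition}. That is the sign the statement should carry.

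Your repair, however, puts the sign freedom in the wrong place. In $\mathbb{L}(\lambda)$ the matrices $Q_A,Q_D$ multiply only $L_A,L_D$, so flipping their signs cannot touch the coupling blocks. Your two options are therefore not equivalent, and only changing the sign of one coupling block restores skew-symmetry.

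Where the sign freedom \emph{is} needed is your step ``Lemma~\ref{fiedlerdiag} lets us absorb them''. Let $a=m-i_0(\mathrm{rev}(\mathbold{t}_w),\mathbold{w})$ and $b=k-c_0(\mathbold{v},\mathbold{c}_v,\mathbold{t}_v)$. Let $\epsilon_a$ be the $a$-th parameter of $Q_A$ and $\eta_b$ the $b$-th parameter of $Q_D$. Then $\mathrm{diag}(Q_A,Q_D)^{-1}\mathbb{L}(\lambda)$ has $L_A,L_D$ on the diagonal. By your index identities, its coupling blocks are those of $\mathbb{L}(\lambda)$ scaled by $\epsilon_a$ and $\eta_b$. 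So Theorem~\ref{gfprproof} shows it is a Rosenbrock linearization of $D(\lambda)+\epsilon_a\eta_b\,B^TA(\lambda)^{-1}B$, which is not $G(\lambda)$ when $\epsilon_a\neq\eta_b$. The cure is available: $Q_A$ and $Q_D$ are unique only up to sign, and flipping either preserves skew-symmetry, so you can and must choose them with $\epsilon_a=\eta_b$.

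Equivalently, let $Q_A,Q_D$ act on whole block rows, $\mathbb{L}=\mathrm{diag}(Q_A,Q_D)\widehat{\mathbb{L}}$, where $\widehat{\mathbb{L}}$ is the GFPR carrying $-B$. Then skew-symmetry is exactly the condition $\epsilon_a=\eta_b$, and Lemma~\ref{fiedlerdiag} applies verbatim. With this made explicit your argument is complete. It is then more careful than the paper's, which neither notices the sign error nor accounts for $Q_A,Q_D$ in the linearization step.
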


\begin{proof}
    It is given that $h$ is even and $\mathbold{ w}$ is the simple admissible tuple of $\{0,1,\ldots,h\}$. When $h=0$, we have, $\mathbold{ w}=0$ and $\mathbold{ c}_{w}=\mathbold{ t}_{w}=\phi$ so that $i_{0}(rev(\mathbold{ t}_{w}),\mathbold{ w})=c_{0}(\mathbold{ w},\mathbold{ c_{w}},\mathbold{ t}_{w})=0$. When $h>0$, $\mathbold{ w}=(h-1:h,\ldots,3:4,1:2,0)$ and $\mathbold{ c}_{w}=(h-1,h-3,\ldots,1)$ and hence $c_{0}(\mathbold{ w},\mathbold{ c_{w}},\mathbold{ t}_{w})=2+c_{2}(\mathbold{ t}_{w})$ and $i_{0}(rev(\mathbold{ t}_{w}),\mathbold{ w})=2+i_{2}(rev(\mathbold{ t}_{w})$ so that $i_{0}(rev(\mathbold{ t}_{w}),\mathbold{ w})=c_{0}(\mathbold{ w},\mathbold{ c_{w}},\mathbold{ t}_{w})$.  With similar arguments we can show that $i_{0}(rev(\mathbold{ t}_{v}),\mathbold{ v})=c_{0}(\mathbold{ v},\mathbold{ c_{v}},\mathbold{ t}_{v})$. From Theorem \ref{skewsymmetricpolypencil}, $Q_{A}L_{A}(\lambda)$ and $Q_{D}L_{D}(\lambda)$ are $T-$even. Consequently $\mathbb{L}(\lambda)$ is skew-symetric.

    When $A_{0}$ (resp., $A_{m}$) is singular, $0\notin\mathbold{ t}_{w}$ (resp., $-m\notin\mathbold{ t}_{z_{h}}$) so the matrix assignments for $\mathbold{ t}_{w}$, $rev(\mathbold{ t}_{w})$, $\mathbold{ t}_{{z}_{h}}$ and $rev(\mathbold{ t}_{z_{h}})$ are nonsingular. Similarly, when $D_{0}$ (resp., $D_{k}$) is singular, $0\notin\mathbold{ t}_{v}$ (resp., $-k\notin\mathbold{ t}_{z_{\ell}}$) so the matrix assignments for $\mathbold{ t}_{v}$, $rev(\mathbold{ t}_{v})$, $\mathbold{ t}_{{z}_{\ell}}$ and $rev(\mathbold{ t}_{z_{\ell}})$ are nonsingular. Therefore by taking $\sigma=\mathbold{ w}$, $\tau=\mathbold{ z}_{h}$, $\sigma_{1}=rev(\mathbold{ t}_{w})$, $\sigma_{2}=(\mathbold{ c}_{w},\mathbold{ t}_{w})$, $\tau_{1}=rev(\mathbold{ t}_{z})$ and $\tau_{2}=(\mathbold{ c}_{{z}_{h}},\mathbold{ t}_{z_{h}})$ and $\gamma=\mathbold{ w}$, $\delta=\mathbold{ z}_{h}$, $\gamma_{1}=rev(\mathbold{ t}_{w})$, $\gamma_{2}=(\mathbold{ c}_{w},\mathbold{ t}_{w})$, $\delta_{1}=rev(\mathbold{ t}_{z})$ and $\delta_{2}=(\mathbold{ c}_{{z}_{h}},\mathbold{ t}_{z_{h}})$, from Theorem \ref{gfprproof} it follows that $\mathbb{L}(\lambda)$ is a Rosenbrock linearization of $\mathcal{S}(\lambda)$ if the matrix assignments for $\mathbold{ c}_{w}$, $\mathbold{ c}_{z_{h}}$, $\mathbold{ c}_{v}$, and $\mathbold{ c}_{z_{\ell}}$ are nonsingular. By arguments similar to those given in Theorem \ref{tevenproof} we get the required result.
\end{proof}

\begin{example}
Let $G(\lambda)=D(\lambda)+B^{T}A(\lambda)^{-1}B\in\mathbb{C}(\lambda)^{r\times r}$ where $A(\lambda)=\lambda^{4}A_{4}+\lambda^{3}A_{3}+\lambda^{2}A_{2}+\lambda A_{1}+A_{0}\in\mathbb{C}[\lambda]^{n\times n}$, $D(\lambda)=\lambda^{5}D_{5}+\lambda^{4}D_{4}+\lambda^{3}D_{3}+\lambda^{2}D_{2}+\lambda  D_{1}+D_{0}\in\mathbb{C}[\lambda]^{r\times r}$, $C\in\mathbb{C}^{r\times n}$, and $B\in\mathbb{C}^{n\times r}$. Consider $h=2$, $\mathbold{w}=(1,2,0)$, $\mathbold{c}_{w}=(1)$, $\mathbold{z}_{h}=(-4,-3)$, $\mathbold{c}_{z_{h}}=(-4)$ and $\mathbold{t}_{w}=\phi=\mathbold{t}_{z_{h}}$. Similarly, $\ell=2$, $\mathbold{v}=(1,2,0)$, $\mathbold{c}_{v}=(1)$, $\mathbold{z}_{\ell}=-4,-3,-5)$, $\mathbold{c}_{z_{\ell}}=(-4)$, $\mathbold{t}_{v}=\phi=\mathbold{t}_{z_{\ell}}$. Also, $Q_{A}=\mathrm{diag}(I_{n},I_{n},I_{n},-I_{n})$ and $Q_{D}=\mathrm{diag}(I_{r},-I_{r},-I_{r},-I_{r},I_{r})$. Then, 
    {\footnotesize \[
     \mathbb{L}(\lambda)
       =\left[\begin{array}{cccc|ccccc}
         -A_{4} & \lambda A_{4} & 0 & 0 & 0 & 0 & 0 & 0 & 0 \\
         \lambda A_{4} & \lambda A_{3}+A_{2} & A_{1} & -I_{n} & 0 & 0 & 0 & 0 & 0 \\
         0 & A_{1} & -\lambda A_{1} + A_{0} & -\lambda I_{n} & 0 & 0 & 0 & B & 0 \\
         0 & I_{n} & -\lambda I_{n} & 0 & 0 & 0 & 0 & 0 & 0 \\\hline
         0 & 0 & 0 & 0 & 0 & -I_{r} & \lambda I_{r} & 0 & 0 \\
         0 & 0 & 0 & 0 & I_{r} & -\lambda D_{5} + D_{4} & -\lambda D_{4} & 0 & 0 \\
         0 & 0 & 0 & 0 & -\lambda I_{r} & -\lambda D_{4} & -\lambda D_{3} -D_{2} & - D_{1} & I_{r} \\
         0 & 0 & B^{T} & 0 & 0 & 0 & -D_{1} & \lambda D_{1}-D_{0} & -\lambda I_{r} \\
         0 & 0 & 0 & 0 & 0 & 0 & -I_{r} & \lambda I_{r} & 0 
      \end{array}\right]
     \]}
     is a skew-symmetric Rosenbrock linearization of $G(\lambda)$ whenever $A_{4}$ is nonsingular.
\end{example}

\printbibliography

\end{document}